\documentclass{amsart}[11pt]

\usepackage{amsmath,amssymb,amsfonts,amsthm}
\usepackage{ifpdf}
\usepackage{enumerate}
\usepackage{ulem}
\usepackage{leftidx}
\usepackage{microtype}
\usepackage{afterpage}

\usepackage{breqn}

\usepackage{tikz}
\usetikzlibrary{calc}
\usepackage{tikz-qtree}

\usepackage[plainpages=false,hypertexnames=false,pdfpagelabels]{hyperref}

\vfuzz2pt 
\hfuzz2pt 


\newcommand{\arxiv}[1]{\href{http://arxiv.org/abs/#1}{\tt arXiv:\nolinkurl{#1}}}

\newcommand{\googlebooks}[1]{(preview at \href{http://books.google.com/books?id=#1}{google books})}
\renewcommand{\googlebooks}[1]{}

\theoremstyle{plain}
\newtheorem{prop}{Proposition}[section]

\newtheorem{thm}[prop]{Theorem}
\newtheorem{thm*}{Theorem}

\newtheorem{fact}[prop]{Fact}

\newtheorem{lem}[prop]{Lemma}
\newtheorem{cor}[prop]{Corollary}
\newtheorem*{cor*}{Corollary}

\numberwithin{equation}{section}

\theoremstyle{remark}

\newtheorem{remark}[prop]{Remark}           
\newtheorem*{rem*}{Remark}               
\newtheorem*{ex*}{Example}                

\theoremstyle{definition}
\newtheorem{defn}[prop]{Definition}         

\newtheorem*{defn*}{Definition}             

\theoremstyle{plain}

\newcounter{comment}
\newcommand{\noop}[1]{}

\def\clap#1{\hbox to 0pt{\hss#1\hss}}


\newcommand{\Integer}{\mathbb Z}

\newcommand{\Real}{\mathbb R}
\newcommand{\Complex}{\mathbb C}

\def\semicolon{;}
\def\applytolist#1{
    \expandafter\def\csname multi#1\endcsname##1{
        \def\multiack{##1}\ifx\multiack\semicolon
            \def\next{\relax}
        \else
            \csname #1\endcsname{##1}
            \def\next{\csname multi#1\endcsname}
        \fi
        \next}
    \csname multi#1\endcsname}

\def\calc#1{\expandafter\def\csname c#1\endcsname{{\mathcal #1}}}
\applytolist{calc}QWERTYUIOPLKJHGFDSAZXCVBNM;
\def\bbc#1{\expandafter\def\csname bb#1\endcsname{{\mathbb #1}}}
\applytolist{bbc}QWERTYUIOPLKJHGFDSAZXCVBNM;
\def\bfc#1{\expandafter\def\csname bf#1\endcsname{{\mathbf #1}}}
\applytolist{bfc}QWERTYUIOPLKJHGFDSAZXCVBNM;
\def\calc#1{\expandafter\def\csname f#1\endcsname{{\mathfrak #1}}}
\applytolist{calc}QWERTYUIOPLKJHGFDSAZXCVBNM;

\newcommand{\id}{\boldsymbol{1}}
\renewcommand{\imath}{\mathfrak{i}}
\renewcommand{\jmath}{\mathfrak{j}}

\newcommand{\iso}{\cong}

\newcommand{\set}[2]{\left\{#1\middle|#2\right\}}

\makeatletter
\newcommand{\hashdef}[2]{\@namedef{#1}{#2}}
\newcommand{\hashlookup}[1]{\@nameuse{#1}}
\makeatother

  \newcommand{\pathtographs}{diagrams/graphs/}

\input{\pathtographs lookup.tex}

\newcommand{\bigraph}[1]{{\hspace{-3pt}\begin{array}{c}%
  \raisebox{-2.5pt}{\includegraphics[height=6mm]{\pathtographs \hashlookup{#1}}}%
\end{array}\hspace{-3pt}}}

\newcommand{\smallbigraph}[1]{{\hspace{-3pt}\begin{array}{c}%
  \raisebox{-2.5pt}{\includegraphics[height=3mm]{\pathtographs \hashlookup{#1}}}%
\end{array}\hspace{-3pt}}}

\newcommand{\Tr}{\operatorname{Tr}}

\newcommand{\jw}[1]{f^{(#1)}}


\textwidth   5.5in%
\textheight  9.0in%
\oddsidemargin 12pt%
\evensidemargin 12pt

\topmargin -.6in%
\headsep .5in
\usepackage{yfonts}

\title{$1$-supertransitive subfactors with index at most $6 \frac{1}{5}$}
\author{Zhengwei Liu}
\author{Scott Morrison}
\author{David Penneys}

\definecolor{medium-blue}{rgb}{0,0,.8}
\hypersetup{
   colorlinks, linkcolor={purple},
   citecolor={medium-blue}, urlcolor={medium-blue}
}

\newcommand{\fish}{\cB\cH\cF}
\newcommand{\TL}{\cT\hspace{-.08cm}\cL}

\DeclareMathOperator{\tr}{tr}
\DeclareMathOperator{\thick}{thick}
\DeclareMathOperator{\coeff}{coeff}

\usetikzlibrary{calc}
\tikzstyle{unshaded}=[fill=white]
\tikzstyle{shaded}=[fill=red!10!blue!20!gray!30!white]
\newcommand{\nbox}[6]{
	\draw[thick, #1] ($#2+(-#3,-#3)+(-#4,0)$) rectangle ($#2+(#3,#3)+(#5,0)$);
	\coordinate (Zz) at ($#2+(-#4,0)$);
	\coordinate (zZz) at ($#2+(#5,0)$);
	\node at ($1/2*(Zz)+1/2*(zZz)$) {#6};
}
\newcommand{\ncircle}[5]{
	\draw[thick, #1] #2 circle (#3);
	\node at #2 {#5};
	\node at ($#2+(#4:.15cm)+(#4:#3cm)$) {$\star$};
}

\newcommand{\notfree}[1]{}

\begin{document}

\begin{abstract}
We classify irreducible II$_1$ subfactors $A \subset B$ such that $B\ominus A$ is reducible as an $A-A$ bimodule, with
index at most $6\frac{1}{5}$, leaving aside the composite subfactors at index exactly 6.
Previous work has already achieved this up to
index $3+\sqrt{5} \approx 5.23$. We find there are exactly three such subfactors with index in
$(3+\sqrt{5}, 6 \frac{1}{5}]$, all with index $3+2\sqrt{2}$. One of these comes
from $SO(3)_q$ at a root of unity, while the other two appear to be closely related, and are `braided up
to a sign'.
\end{abstract}

\maketitle

\section*{Introduction}

A II$_1$-subfactor is an inclusion of infinite dimensional von Neumann algebras $A \subset B$ each with trivial centre, and a trace on $B$ with $\tr(\mathbf 1) = 1$.
The index of the subfactor is the Murray-von Neumann dimension of $B$ as a left $A$-module.
A subfactor is irreducible if $B$ is irreducible as an $A-B$ bimodule. As an $A-A$ bimodule, $B$ is certainly reducible,
with $A$ as a sub-bimodule. In this article, we study the very restricted case in which the index is at most $6
\frac{1}{5}$, $B$ is irreducible as an $A-B$ bimodule, and $B \ominus A$ is reducible as an $A-A$ bimodule (this condition is often called `1-supertransitive').

After Jones' landmark result \cite{MR0696688} that the index of a II$_1$ subfactor is quantized, and must lie in the set 
$$ \set{ 4\cos^2\left(\pi/n\right)}{n \geq 3} \cup [4, \infty],$$ 
Ocneanu and others classified all subfactors with index less than 4 \cite{MR996454}. Popa and others subsequently completeted the classification of subfactors with index exactly 4 \cite{MR1278111}. 
Above index 4, there are subfactors with Temperley-Lieb standard invariant at every index \cite{MR1198815}. 
Leaving these aside, however, the spectrum of possible index values for extremal subfactors remains discrete above 4! 
Haagerup announced that in the index range $(4, 3+\sqrt{3})$ there were at most 3 families of subfactors (besides those with Temperley-Lieb standard invariants) \cite{MR1317352}, and these families were later reduced to exactly 3 subfactors (up to taking duals) which have subsequently been constructed \cite{MR1686551,MR2979509}. 
In a series of recent articles, these classification results were pushed all the way to index less than 5 \cite{MR2914056,MR2902285,MR2993924,MR2902286}. 
Above index 5, we also have a complete classification of 1-supertransitive subfactors with index less than $3+\sqrt{5}$ \cite{1205.2742}. 
For a much more detailed history, including references for all the results mentioned in this paragraph, and an overview of the techniques used in the classification up to index 5, see the survey article \cite{1304.6141}.

Recently, two breakthroughs have open the possibility of pushing classification results to higher indices.
First, \cite{1308.5656} provides results on virtual normalizers implying the existence of an intermediate subfactor.
Second, \cite{MR1334479,1208.1564} shows
that principal graphs which are stable at some depth must remain stable thereafter.

At this point we still have few results on subfactors with arbitrary supertransitivity above index 5. However, this
article explains that the 1-supertransitive case is now completely understood below index 6.
In fact, while index exactly $6$ still holds some mysteries, we can also classify all 1-supertransitive subfactors
with index in the interval $(6, 6\frac{1}{5}]$. We are forced to stop, for now, at $6\frac{1}{5}$, and we explain in
what follows why this index value is an obstacle.

At index exactly $6$, we show that any 1-supertransitive subfactor must have an intermediate subfactor, and is thus a
composition of an $A_3$ subfactor with an $A_5$ subfactor, or of an $A_3$ subfactor with a $D_4$ subfactor. In the
later case, there are (at least) countably many different standard invariants of finite depth subfactors --- indeed every finite
quotient of the modular group $\mathbb Z/ 2\mathbb Z * \mathbb Z / 3\mathbb Z \iso PSL(2,\mathbb Z)$ gives an example of a Bisch-Haagerup
subfactor \cite{MR1386923} at index $6$. But even worse, the result of \cite{1309.5354} (which builds upon \cite{MR2314611}) shows there are
non-isomorphic hyperfinite subfactors, not classifiable by countable structures, all with the same $A_3 * D_4$ standard
invariant at index 6.

At present not much is known about compositions of $A_3$ with $A_5$; unfortunately the techniques
of \cite{1308.5691} and \cite{1308.5723} which classify the standard invariants of compositions of $A_3$ with either $A_3$ or $A_4$
break down.

Our main result is
\begin{thm*}\label{thm:Main}
If $\cP_\bullet$ is a 1-supertransitive subfactor planar algebra in the index range $[3+\sqrt{5},6 \frac{1}{5}]$, then either
$\cP_\bullet$ has a nontrivial biprojection corresponding to an intermediate subfactor (and so the index is exactly
$3+\sqrt{5}$ or exactly 6), or the principal graphs $(\Gamma_+,\Gamma_-)$ are one of
\begin{align*}
\cS&=\left(\bigraph{bwd1v1p1v1x0p1x1p0x1v0x1x0duals1v1x2v1},\bigraph{bwd1v1p1v1x0p1x1p0x1v0x1x0duals1v1x2v1}\right) \text{ or}\\
\cS'&=\left(\bigraph{bwd1v1p1v1x0p1x1p0x1v0x1x0duals1v2x1v1},\bigraph{bwd1v1p1v1x0p1x1p0x1v0x1x0duals1v2x1v1}\right).
\end{align*}
Moreover, there is a unique $\cP_\bullet$ with principal graphs $\cS$, and exactly two $\cP_\bullet$ with principal
graphs $\cS'$ which are complex conjugate.
All three planar algebras are symmetrically self-dual in the sense of \cite[Section 5.1]{1208.3637}, and thus give unshaded planar algebras.
(The red lines on $\Gamma_\pm$ denote dual data. See Section \ref{sec:Notation} for the relevant notation and definitions.)
\end{thm*}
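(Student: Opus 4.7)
The plan is to apply the ``vines and weeds'' classification scheme used in \cite{MR2914056,MR2902285,MR2993924,MR2902286,1205.2742}, extended to higher indices by the two recent breakthroughs cited in the introduction. First I would enumerate, by the standard principal-graph odometer, all pairs $(\Gamma_+,\Gamma_-)$ compatible with 1-supertransitivity whose Frobenius--Perron eigenvalue squared lies in $[3+\sqrt{5},6\tfrac{1}{5}]$, truncated at some depth (likely 7 or 8). The 1-supertransitive hypothesis fixes the graph up through depth 2 to be of the form with a double point at depth 2, and the dual data assignment restricts how the two vertices at depth 2 can pair between $\Gamma_+$ and $\Gamma_-$. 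This search yields a finite list of fully-specified ``vines'' together with ``weeds'' representing families of potential graphs that could still grow; thanks to the stability theorem of \cite{MR1334479,1208.1564}, any weed that becomes translation-stable is actually captured by a vine, so one can legitimately cut off the search at finite depth.

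Next I would apply the usual battery of obstructions to eliminate the weeds: triple-point and quadratic-tangle obstructions; annular multiplicity arguments; eigenvalue/number-theoretic obstructions ruling out incompatible Frobenius--Perron dimensions; and connection/flatness computations. The critical new ingredient is the virtual normalizer analysis of \cite{1308.5656}, which forces the existence of an intermediate subfactor whenever a sufficiently small sub-bimodule appears in the standard invariant. This is exactly what handles the boundary cases: at index $3+\sqrt{5}$ the intermediate subfactors push us into known territory, and at index $6$ the virtual normalizer forces a decomposition into $A_3 \star A_5$ or $A_3 \star D_4$ type composite subfactors, which the theorem explicitly excludes. These intermediate-subfactor arguments justify the ``or'' clause in the statement.

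After pruning, I expect only the two vines $\cS$ and $\cS'$ to remain, both at index $3+2\sqrt{2}$. For each I would construct the planar algebra and prove uniqueness. For $\cS$, a realisation as an unshaded planar algebra coming from $SO(3)_q$ at an appropriate root of unity gives existence, and uniqueness follows by showing the connection is rigid up to gauge. For $\cS'$, I would build the planar algebra directly from a biunitary connection on the bigraph; since $\Gamma_+^{\cS'}$ is not self-dual as a bipartite graph with dual data in the same way (the dual data differs from $\cS$), one obtains a Galois-twisted pair of solutions, accounting for the two complex-conjugate planar algebras. Symmetric self-duality in the sense of \cite[Section 5.1]{1208.3637} is then verified by exhibiting the required involution on the graph.

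The main obstacle will be the weed-pruning step: in the index window just above $3+\sqrt{5}$ and especially near $6\tfrac{1}{5}$, several weeds cannot be eliminated by any one of the standard obstructions individually, and require custom combinations of annular multiplicity, connection-existence, and virtual-normalizer arguments. Indeed, the reason the classification stops at $6\tfrac{1}{5}$ rather than continuing further is precisely that a specific weed just above this index appears to resist all currently available obstructions. A secondary technical difficulty will be verifying uniqueness of the planar algebra on $\cS'$ up to the expected complex conjugation, which requires carefully tracking the gauge freedom in the connection.
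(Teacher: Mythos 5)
Your enumeration-and-pruning phase matches the paper's strategy: the odometer is run from $(A_2,A_2)$, the thickness/virtual-normalizer results of \cite{1308.5656} dispose of almost all depth-3 weeds by producing intermediate subfactors, and the stability theorem of \cite{MR1334479,1208.1564} together with the observation that a 1-supertransitive cylinder cannot be extended lets the search terminate at finite depth. One expectation you should adjust: after this phase the paper is \emph{not} left with only the vines $\cS,\cS'$; four genuine weeds $\cD,\cD',\cK,\cK'$ survive and require two further sections of bespoke work (relative-dimension estimates forcing vertices of dimension in $(1,\sqrt2)$, a count showing only two edges can join depths 3 and 4, and further odometer runs). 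Your guess about why the classification stops at $6\tfrac15$ is essentially right --- the binding inequality is a dimension bound that fails just above this index.

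Where you genuinely diverge from the paper is in existence and uniqueness for $\cS$ and $\cS'$, and here your proposal has a gap. You propose building $\cS'$ from a biunitary connection and deducing uniqueness from rigidity of the connection up to gauge. A biunitary connection alone does not produce a subfactor with the stated principal graphs: you must also verify flatness (equivalently, compute the higher relative commutants), and your proposal never addresses this, which is the hard step in any connection-based construction at index $3+2\sqrt2$. Likewise ``rigid up to gauge'' is asserted rather than argued, and the mechanism producing exactly two complex-conjugate solutions for $\cS'$ is not derived. The paper instead works inside the bipartite graph planar algebra: the embedding theorem \cite{MR2812459} forces any such $\cP_\bullet$ to contain an uncappable self-adjoint rotational eigenvector squaring to $\jw{2}$; all such elements are classified (a single one-parameter family for $\cS$, two families distinguished by a sign $\varepsilon$ for $\cS'$); each yields a quadratic $\sigma$-braiding whose relations are independent of the parameter; and a skein-theoretic evaluability theorem (an adaptation of the HOMFLYPT template algorithm) shows the generated planar algebra is a subfactor planar algebra. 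The two $\cS'$ planar algebras are distinguished by evaluating a closed octahedral diagram in the generator, and self-duality is checked explicitly rather than read off an involution of the graph. This route buys a uniform existence-plus-uniqueness argument and sidesteps flatness entirely; your route, if the flatness and gauge-orbit computations were actually carried out, would also work, but as written those are the missing steps. (Note the paper remarks that even though $\cS$ is the reduction of $A_7$ at $\jw{2}$, no correspondingly easy uniqueness proof is known, so the $SO(3)_q$ realization alone does not finish the $\cS$ case either.)
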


The planar algebra with principal graphs $\cS$ is merely the reduction of $A_7$ at $f^{(2)}$ (equivalently, the shading of the
planar algebra coming from the 3-dimensional representation of quantum $SO(3)$ at the appropriate root of unity). In particular,
it is braided. Although we do not have a direct construction of the other two planar algebras starting from quantum groups,
they appear to be closely related. In particular, while they are not braided, they are ``$\sigma$-braided'', a notion we introduce in Section \ref{sec:Braided}. It would be very interesting to find a construction starting from the braided case, perhaps by some type of
twisting. 

We expect our subfactors to be related to Izumi's $2^41$ subfactors.
By \cite[Examples A.2 and A.3]{MR1832764} (see also \cite[Proposition 6 and Table 2]{1208.1500}), there are exactly one $2^{\bbZ/2\bbZ\times \bbZ/2\bbZ}1$ subfactor and two complex conjugate $2^{\bbZ/4\bbZ}1$ subfactors, with principal graph pairs
$$
\left(
\bigraph{bwd1v1v1p1p1p1v1x0x0x0p0x1x0x0p0x0x1x0duals1v1v1x2x3},
\bigraph{bwd1v1v1p1p1p1v1x0x0x0p0x1x0x0p0x0x1x0duals1v1v1x2x3}
\right)
\text{ and }
\left(
\bigraph{bwd1v1v1p1p1p1v1x0x0x0p0x1x0x0p0x0x1x0duals1v1v2x1x3},
\bigraph{bwd1v1v1p1p1p1v1x0x0x0p0x1x0x0p0x0x1x0duals1v1v2x1x3}
\right)
$$
respectively.
We expect these $2^41$ subfactors arise as equivariantizations of the one $\cS$ and two $\cS'$ subfactors respectively.

This work builds on \cite{MR2914056,1205.2742}, which classify 1-supertransitive subfactors with index in the intervals
$(4,5)$ and $(5, 3+\sqrt{5})$ respectively. Index at most 4 was done by \cite{MR996454,MR1278111} (see \cite{1304.6141} for further details), while index exactly 5 is as yet unpublished, although summarized in \cite{1304.6141}. Subfactor planar algebras with intermediates
at index $3+\sqrt{5}$ were classified in \cite{1308.5691,1308.5723}.

\subsection*{Outline}

In Sections \ref{sec:Weeds} and \ref{sec:main-result}, we show that any 1-supertransitive subfactor with index in $(3+\sqrt{5},6\frac{1}{5}]$ either has principal graphs $\cS$ or $\cS'$, or is an extension of one of the following four graphs:
\begin{align*}
\cD & = \left(\bigraph{bwd1v1p1v1x0p1x1p0x1duals1v1x2},\bigraph{bwd1v1p1v1x0p1x1p0x1duals1v1x2} \right) &
\cK & = \left(\bigraph{bwd1v1p1v1x1p0x1p0x1duals1v1x2},\bigraph{bwd1v1p1v1x1p0x1p0x1duals1v1x2}\right)\\
\cD' & = \left(\bigraph{bwd1v1p1v1x0p1x1p0x1duals1v2x1}, \bigraph{bwd1v1p1v1x0p1x1p0x1duals1v2x1} \right)&
\cK' & = \left(\bigraph{bwd1v1p1v1x1p0x1p0x1duals1v1x2},\bigraph{bwd1v1p1v1x1p1x0p0x1duals1v1x2}\right).
\end{align*}
In Section \ref{sec:Shuriken}, we prove existence and uniqueness (up to duals) of our $\sigma$-braided subfactors with principal graphs $\cS,\cS'$.
Some technical computations from this section are deferred to Appendix \ref{sec:Appendix}.
Finally, in Sections \ref{sec:Dart} and \ref{sec:Kangaroo}, we eliminate extensions of the graphs $\cD,\cD'$ and $\cK,\cK'$ respectively.

\subsection*{Acknowledgements}

This project started during the 2013 NCGOA at Vanderbilt University.
Zhengwei Liu and David Penneys would like to thank Dietmar Bisch and the other organizers.

Scott Morrison was supported by a `Discovery Early Career Research Award' DE120100232 from the Australian Research Council.
David Penneys was partially supported by the Natural Sciences and Engineering Research Council of Canada.
All authors were supported by DOD-DARPA grant HR0011-12-1-0009.

\section{Weeds and intermediate subfactors}\label{sec:Weeds}

\subsection{Notation for planar algebras}\label{sec:Notation}

We refer the reader to \cite{MR2972458,MR2979509} for the definition of a subfactor planar algebra. 
In this article, $\cP_\bullet$ denotes a subfactor planar algebra of modulus $\delta=[2]$ where 
$$
[k]=\frac{q^k-q^{-k}}{q-q^{-1}},
$$ 
and $q\in \set{\exp\left(\frac{2\pi i}{2j}\right)}{j\geq 3}\cup[1,\infty)$ such that $[2]=q+q^{-1}$.
We denote the Temperley-Lieb subfactor planar subalgebra by $\TL_\bullet$.

We refer the reader to \cite{MR999799,1208.1564,1304.6141} for the definition of the principal graphs $(\Gamma_+,\Gamma_-)$ of $\cP_\bullet$.
If there is only one projection $P\in\cP_{n,\pm}$ in the equivalence class $[P]$ corresponding to a vertex of $\Gamma_\pm$ at depth $n$, then we identify $[P]$ with $P$.

For a projection $P\in\cP_{k,\pm}$, the dual projection $\overline{P}$ is given by
$$
\overline{P}
=
\begin{tikzpicture}[baseline=-.1cm]
	\clip (-1.1,-.9)--(-1.1,.9)--(1.1,.9)--(1.1,-.9);	
	\draw (0,.4) arc (180:0:.4cm)--(.8,-.8);
	\draw (0,-.4) arc (0:-180:.4cm)--(-.8,.8);
	\draw[thick, unshaded] (-.4, -.4) -- (-.4, .4) -- (.4, .4) -- (.4,-.4) -- (-.4, -.4);
	\node at (0,0) {$P$};
	\node at (-1,.6) {{\scriptsize{$k$}}};
	\node at (1,-.6) {{\scriptsize{$k$}}};
\end{tikzpicture}.
$$
For a vertex $[P]$ of $\Gamma_\pm$ at depth $n$, there is a corresponding dual vertex $[\overline{P}]$ necessarily at depth $n$.
If $n$ is even then $[\overline{P}]$ is a vertex of $\Gamma_+$, but if $n$ is odd, then $[\overline{P}]$ is a vertex of $\Gamma_-$.

When we draw principal graph pairs, we use the convention that the vertical ordering of vertices at a given odd depth determines the duality; the lowest vertices in each graph at each odd depth are dual to each other, etc.
When we specify the duality at even depths (sometimes we omit this data), the duality is represented by red arcs joining dual pairs of vertices. 
Self-dual even vertices have a small red dash above them.

\begin{defn}
A subfactor planar algebra $\cP_\bullet$ is called \underline{$k$-supertransitive} if $\cP_{j,+}=\TL_{j,+}$ for all $0\leq j\leq k$. Equivalently, $\cP_\bullet$ is $k$-supertransitive if the truncation $\Gamma_+(k)$ of $\Gamma_+$ to depth $k$ is $A_{k+1}$.
This gives us the notion of the supertransitivity of a potential principal graph.
We say a subfactor $N\subset M$ is $k$-supertransitive if its associated subfactor planar algebra is $k$-supertransitive.
\end{defn}

\begin{remark}
When we say a subfactor, subfactor planar algebra, or potential principal graph $\Gamma_\pm$ is $k$-supertransitive, we usually also mean that $\TL_{k+1,+}\subsetneq \cP_{k+1,+}$ or $\Gamma_\pm(k+1)\neq A_{k+2}$, although strictly speaking, this is an abuse of nomenclature.
\end{remark}

\subsection{Results on 1-supertransitive subfactors using Liu's thickness}\label{sec:Thickness}

Suppose $\cP_\bullet$ is a 1-supertransitive subfactor planar algebra.

\begin{defn}
Suppose $X\in \cP_{2,+}$ is a positive element such that
$$
X=\sum_{i=1}^k c_i P_i
$$
where the $P_i$'s are mutually orthogonal minimal projections in $\cP_{2,+}$, and $c_i>0$ for all $1\leq i\leq k$.
We define the \underline{thickness of $X$}, denoted by $\thick(X)$, to be the number $k$, which is independent of the decomposition of $X$.

For two minimal projections $P,Q\in \cP_{2,+}$, we define the \underline{thickness between $P$ and $Q$}, denoted by $\thick(P,Q)$ to be the number of length 2 paths on $\Gamma_+$ between $[P]$ and $[Q]$.
\end{defn}

The thickness between $P$ and $Q$ is related to the thickness of the positive operator $\overline{P}\circ Q$, where $\circ$ denotes the coproduct on $\cP_{2,+}$:
$$
\overline{P} \circ Q =
\begin{tikzpicture}[baseline = -.1cm]
	\clip (-.5,.8) -- (-.5,-.8) -- (1.5,-.8) -- (1.5,.8);
	\draw[shaded] (-.2,-1) -- (-.2,1) -- (1.2,1) -- (1.2,-1);
	\draw[unshaded] (.2,.4) arc (180:0:.3cm) -- (.8,-.4) arc (0:-180:.3cm);
	\nbox{unshaded}{(0,0)}{.4}{0}{0}{$\overline{P}$}
	\nbox{unshaded}{(1,0)}{.4}{0}{0}{$Q$}
\end{tikzpicture}.
$$

\begin{lem}[{\cite[Lemma 4.2]{1308.5656}}]\label{lem:ThicknessBound}
Suppose $P,Q$ are minimal projections in $\cP_{2,+}$. Then $\thick(\overline{P}\circ Q)\leq \thick(P,Q)$.
\end{lem}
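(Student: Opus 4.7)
The plan is to factor the positive operator $\overline P\circ Q$ as $WW^{*}$ for a suitable half-diagram $W$, and then bound the rank of $WW^*$ in the multi-matrix algebra $\cP_{2,+}$ by a Hom-space dimension that Frobenius reciprocity identifies with the path count $\thick(P,Q)$.

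I first identify $\cP_{2,+}$ with $\operatorname{End}_{N\text{-}N}(H)$, where $H:={}_N L^2(M)\tensor_M \overline{L^2(M)}{}_N$ is the ambient $2$-strand $N$-$N$ bimodule. Under this identification, minimal projections correspond to irreducible $N$-$N$ sub-bimodules of $H$ (equivalently, the depth-$0$ and depth-$2$ vertices of $\Gamma_+$), and the thickness of a positive element is its rank as an element of this multi-matrix algebra. For each minimal projection $P$, I choose an isometric embedding $v_P\colon X_P\to H$ of the corresponding sub-bimodule so that $P = v_P v_P^*$; diagrammatically, $v_P$ is the ``top half'' of the box picture of $P$, with the single colored strand $X_P$ on the bottom and the two standard strands of $H$ on top.

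Substituting $\overline P = v_{\overline P} v_{\overline P}^*$ and $Q = v_Q v_Q^*$ into the coproduct diagram and cutting horizontally through the centers of the two boxes, the diagram factors as $\overline P\circ Q = WW^*$, where $W$ is the half-coproduct tangle obtained by placing $v_{\overline P}$ and $v_Q$ side-by-side with the top cap joining their inner standard strands. In bimodule terms $W$ is an $N$-$N$ bimodule morphism $W\colon \overline{X_P}\tensor_N X_Q \to H$: the source uses the unshaded region between the two boxes for its tensor product, and the cap realizes the evaluation morphism for the $N$-$M$ duality of $L^2(M)$. Decomposing $W$ on isotypic components as $W = \bigoplus_Y f_Y\tensor\id_Y$ with $f_Y\colon U_Y\to V_Y$ a linear map between multiplicity spaces, the multi-matrix rank of $WW^*$ equals $\sum_Y \mathrm{rank}(f_Y)$, and the elementary bound $\mathrm{rank}(f_Y)\le\min(\dim U_Y,\dim V_Y)\le\dim U_Y\cdot\dim V_Y$ then gives
\[
  \thick(\overline P\circ Q)\;\le\;\dim\operatorname{Hom}_{N\text{-}N}\bigl(\overline{X_P}\tensor_N X_Q,\,H\bigr).
\]

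Two applications of Frobenius reciprocity, using the dualizability of $L^2(M)$ as an $N$-$M$ bimodule, identify the last Hom space with $\operatorname{Hom}_{N\text{-}M}\bigl(X_Q\tensor_N L^2(M),\,X_P\tensor_N L^2(M)\bigr)$. By Schur's lemma this dimension equals $\sum_v m_v([P])\cdot m_v([Q])$, summed over irreducible $N$-$M$ bimodules $v$, where $m_v([P])$ is the edge multiplicity from $[P]$ to $v$ in $\Gamma_+$. These $v$ are exactly the depth-$1$ and depth-$3$ vertices adjacent to both $[P]$ and $[Q]$, so the sum is precisely the number of length-$2$ paths between $[P]$ and $[Q]$, that is, $\thick(P,Q)$. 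The main obstacle is the bimodule translation of the half-diagram $W$ and the cap: pinning down the exact tensor algebra used in the source and identifying the cap with a genuine evaluation morphism requires working in the colored-strand extension of the planar algebra (or equivalently in the $2$-category of bimodules over $N$ and $M$), with some care for the alternating shading; once that is in place, the rank bound and the Frobenius manipulations are routine.
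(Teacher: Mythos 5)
Your argument is correct, and I'll note at the outset that the paper does not prove this lemma at all --- it is quoted from \cite[Lemma 4.2]{1308.5656} --- so there is no in-paper proof to compare against; your reconstruction is essentially the standard (and, as far as I recall, Liu's original) argument. The three ingredients all check out: (i) since $\overline{P}$ and $Q$ are minimal, they factor as $v\,v^*$ through irreducible bimodules, so cutting the coproduct tangle along the horizontal midline exhibits $\overline{P}\circ Q = WW^*$ with $W\colon \overline{X_P}\tensor_N X_Q \to H$ (the vertical reflection that produces the bottom half is exactly the $*$-structure, so the bottom half really is $W^*$); (ii) $\thick(WW^*)=\sum_Y \operatorname{rank}(f_Y)\le \dim\operatorname{Hom}_{N\text{-}N}(\overline{X_P}\tensor_N X_Q, H)$; and (iii) two applications of Frobenius reciprocity give $\dim\operatorname{Hom}_{N\text{-}N}(\overline{X_P}\tensor_N X_Q, X\tensor_M\overline{X}) = \dim\operatorname{Hom}_{N\text{-}M}(X_Q\tensor_N X, X_P\tensor_N X) = \sum_v m_v([P])\,m_v([Q])$, which is the number of length-two walks from $[P]$ to $[Q]$ in $\Gamma_+$, i.e.\ $\thick(P,Q)$. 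The shading bookkeeping you worried about also works out: the lens between the two boxes is unshaded, so the middle tensor product is over $N$, and the cap is the evaluation $\overline{X}\tensor_N X \to {}_M L^2(M)_M$, making $W$ a genuine $N$-$N$ morphism into $H$. The only cosmetic slack is the bound $\operatorname{rank}(f_Y)\le \dim U_Y\cdot\dim V_Y$; the sharper $\operatorname{rank}(f_Y)\le\dim U_Y$ already suffices, but the weaker bound is what sums to the Hom-space dimension, so nothing is lost.
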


The following is a weak version of a powerful result of Liu on virtual normalizers which ensures the existence of an intermediate subfactor.
This weaker version is sufficient for our purposes.

\begin{thm}[{\cite[weakened Theorem 4.16]{1308.5656}}]
Suppose there is a vertex $[P]$ at depth 2 of $\Gamma_+$ satisfying
\begin{itemize}
\item
$[P]$ is central, i.e., there is only 1 edge to the vertex at depth 1,
\item
$\dim([P])>1$, and
\item
$T(P,Q)=1$ for all $Q\in \cP_{2,+}$ with $[Q]\neq [P]$.
\end{itemize}
Then either $\cP_\bullet$ is Temperley-Lieb or $\cP_\bullet$ has a nontrivial biprojection corresponding to an
intermediate subfactor.
\end{thm}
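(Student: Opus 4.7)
My plan is to construct an explicit non-trivial biprojection $B \in \cP_{2,+}$ from the data of $P$; by Bisch's theorem this gives the desired intermediate subfactor. Since $[P]$ is central, its simple summand in $\cP_{2,+}$ is one-dimensional, so $P$ is itself a minimal projection, orthogonal to the Jones projection $e_1$. Assuming $\cP_\bullet \neq \TL_\bullet$, the natural candidate is
\[
B := e_1 + P + \overline{P}
\]
(collapsing to $e_1 + P$ when $[P] = [\overline{P}]$), which is manifestly a projection.

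I would verify the biprojection condition $B \circ B \in \Complex \cdot B$ by expanding and analyzing the nine terms. The contributions $e_1 \circ e_1$ and $e_1 \circ Q$ for $Q \in \{P, \overline{P}\}$ land in $\Complex e_1$ and $\Complex Q$ respectively by standard Temperley--Lieb computations. The crux is the four products among $\{P, \overline{P}\}$. The positive element $\overline{P} \circ P$ decomposes as a non-negative sum over minimal projections of $\cP_{2,+}$, and Lemma \ref{lem:ThicknessBound} bounds its support. For any minimal projection $Q$ not equivalent to $e_1$, $P$, or $\overline{P}$, I would combine the thickness bound with Frobenius reciprocity for the coproduct---which relates the coefficient of $Q$ in $\overline{P} \circ P$ to a coefficient in $\overline{P} \circ Q$---and the hypothesis $T(P, Q) = 1$ to conclude this coefficient vanishes. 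The same argument with $\overline{P}$ handles the three other products, so $B \circ B$ is supported on $\{e_1, P, \overline{P}\}$.

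The final step is to show the three coefficients in $B \circ B$ are equal, so $B \circ B = \lambda B$. I expect to do this by direct trace computations: the $e_1$-coefficient is determined by the single length-$2$ path through depth $1$ (the centrality hypothesis on $[P]$), while the $P$- and $\overline{P}$-coefficients come from $T(P, Q) = 1$. The hypothesis $\dim([P]) > 1$ then ensures $B$ is neither $e_1$ nor the full identity of $\cP_{2,+}$, so the resulting biprojection (and hence the intermediate subfactor) is non-trivial.

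The main obstacle is this final coefficient-matching step. The qualitative thickness arguments easily locate the support of $B \circ B$, but extracting precise ratios requires delicate trace computations using all three hypotheses together. The uniform hypothesis $T(P,Q) = 1$ earns its keep here, pinning down both the $P$- and $\overline{P}$-coefficients simultaneously---a simplification over Liu's full theorem, which handles more general configurations by a more elaborate virtual normalizer argument.
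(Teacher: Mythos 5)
First, a remark on what you are being compared against: the paper does not prove this statement at all --- it is quoted (in weakened form) from Liu's virtual normalizer theorem \cite[Theorem 4.16]{1308.5656}, so there is no in-paper argument, and your proposal has to stand on its own. Your target is reasonable: when the hypotheses hold the biprojection really is expected to be $B=e_1+P+\overline{P}$. But the one step that carries all of the content does not go through as described. The hypothesis $T(P,Q)=1$ together with Lemma \ref{lem:ThicknessBound} controls $\thick(\overline{P}\circ Q)$ only for $[Q]\neq[P]$; it says nothing about $\thick(\overline{P}\circ P)$ or $\thick(P\circ P)$, which are bounded only by $\thick(P,P)=1+\sum_v m_v^2$ (sum over depth-$3$ neighbours $v$ of $[P]$), a quantity on which there is no hypothesis. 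Your Frobenius reciprocity move does not rescue this: the cyclic symmetry of the triangle diagram gives $\tr\bigl((\overline{P}\circ P)\,Q\bigr)=\tr\bigl((P\circ\overline{Q})\,P\bigr)$, so the coefficient of $Q$ in $\overline{P}\circ P$ is, up to positive normalisation, the coefficient of $P$ in $P\circ\overline{Q}$. The thickness hypothesis does tell you that $P\circ\overline{Q}$ is a positive multiple of a \emph{single} minimal projection, but nothing you have said excludes that projection from being $P$ itself; ``$Q$ appears in $\overline{P}\circ P$'' and ``$P$ appears in $P\circ\overline{Q}$'' are Frobenius-equivalent statements, so the reciprocity argument is circular rather than contradictory. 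Ruling this out is exactly the content of Liu's virtual-normalizer analysis and cannot be waved through.

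Two further problems. First, nontriviality: $\dim([P])>1$ rules out $B=e_1$ but not $B=\id_{2,+}$; if $[P]$ and $[\overline{P}]$ exhaust the equivalence classes at depth $2$, your $B$ is the trivial biprojection and produces no proper intermediate subfactor, so the stated dichotomy does not follow from your construction alone. Second, you have located the difficulty in the wrong place: once the support of $B\circ B$ is known to lie in $\{e_1,P,\overline{P}\}$, the matching of the three coefficients is automatic, with no delicate trace computations. Indeed $\coeff_{e_1}(B\circ B)=\tr\bigl((B\circ B)e_1\bigr)/\tr(e_1)=\delta^{-1}\tr(B\overline{B})=\delta^{-1}\tr(B)$, the identity $\tr(X\circ Y)=\delta^{-1}\tr(X)\tr(Y)$ gives $\tr(B\circ B)=\delta^{-1}\tr(B)^2$, and $\overline{B\circ B}=\overline{B}\circ\overline{B}=B\circ B$ forces the $P$- and $\overline{P}$-coefficients to coincide; together with $\dim(P)=\dim(\overline{P})$ these facts force all three coefficients to equal $\delta^{-1}\tr(B)$. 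All of the genuine work in this theorem lives in the support claim you treat as routine.
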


\subsection{Results on intermediate subfactors}

If $N\subset M$ is a subfactor with index in $(4,6 \frac{1}{5}]$ with an intermediate subfactor, then $[M: N]\in \{3+\sqrt{5},6\}$.
Subfactor planar algebras with intermediates at index $3+\sqrt{5}$ were completely classified by Liu \cite{1308.5691}
(see also Izumi-Morrison-Penneys \cite{1308.5723}, which simultaneously conjectured the same classification, without
proving all cases):

\begin{thm}[\cite{1308.5691}]
If $N\subset P \subset M$ is an inclusion of II$_1$-factors such that $[P: N]=2$ and $[M:P]=\frac{3+\sqrt{5}}{2}$
(so $[M:N]=3+\sqrt{5}$), then the principal graphs of $N\subset M$ must be one of the following:
\begin{align*}
\fish_1&=
\left(
\bigraph{bwd1v1p1p1v1x1x0duals1v1x2x3},
\bigraph{bwd1v1p1p1v1x1x0duals1v1x2x3}
\right)
\\
\fish_2&=
\left(
\bigraph{bwd1v1p1v1x0p1x0v1x0p1x0v1x1duals1v1x2v1x2},
\bigraph{bwd1v1p1v1x0p0x1v1x0p1x0p1x0p0x1v1x0x0x1v1duals1v1x2v1x2x4x3v1}
\right)
\\
\fish_3&=
\left(
\bigraph{bwd1v1p1v1x0p1x0v1x0v1p1v1x0p1x0v1x1duals1v1x2v1v1x2},
\bigraph{bwd1v1p1v1x0p0x1v1x0p1x0p0x1v1x0x0p0x0x1v1x0p1x0p1x0p0x1p0x1v0x1x0x1x0v1duals1v1x2v1x3x2v1x2x4x3x5v1}
\right)
\\
\cF\cC&=
\left(
\bigraph{bwd1v1p1v1x0p1x0v1x0v1p1v1x0v1p1duals1v1x2v1v1}\cdots,
\bigraph{bwd1v1p1v1x0p0x1v1x0p1x0p0x1v1x0x0p0x0x1v1x0p1x0p0x1p0x1v1x0x0x0p0x0x1x0duals1v1x2v1x3x2v1x3x2x4}\cdots
\right)
\end{align*}
Moreover, there is exactly one subfactor planar algebra with these principal graph pairs.
\end{thm}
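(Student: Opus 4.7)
The plan is to use the biprojection structure provided by the intermediate subfactor, identify the universal such planar algebra, and then classify its quotients. Since $[P:N]=2$, the inclusion $N\subset P$ has $A_3$ principal graphs and yields a biprojection $e\in\cP_{2,+}$ of trace $2$. The associated reduced planar algebra for $P\subset M$ has index $\tfrac{3+\sqrt{5}}{2}=4\cos^2(\pi/5)$; by the classification of subfactors below index $4$, it must be the $A_4$ (Fibonacci) planar algebra. So the question becomes: which subfactor planar algebras arise as Bisch-Jones extensions of $A_4$ along an index-$2$ biprojection?

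I would then build the universal such extension, namely the Bisch-Jones \emph{free composition} $A_3 * A_4$. Iteratively applying induction and restriction along $e$---with $\mathbb{Z}/2\mathbb{Z}$-fusion beneath the biprojection and Fibonacci fusion above it---produces an infinite principal graph which I expect matches $\cF\cC$. Any planar algebra satisfying the hypotheses projects onto this universal object, so its principal graph is a quotient of $\cF\cC$ obtained by identifying isomorphic bimodules at some finite depth.

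Enumerating these quotients is the combinatorial heart of the argument. Dimensions of vertices in $\cF\cC$ lie in $\Integer[\phi]$ where $\phi=\tfrac{1+\sqrt{5}}{2}$, so any candidate identification must pair vertices of equal $\Integer[\phi]$-dimension. Applying the Jones-Penneys stability results \cite{MR1334479,1208.1564} to forbid re-branching after an identification, together with consistency checks (dual data, biunitarity of the Ocneanu connection, positivity of induction multiplicities), should whittle the list down to exactly the three finite graphs $\fish_1,\fish_2,\fish_3$. This step is the main obstacle: there are many near-miss Diophantine candidates, and eliminating each requires a delicate case-by-case analysis.

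Finally, I would establish existence and uniqueness of the subfactor planar algebra for each graph. Existence for $\cF\cC$ is essentially tautological from the Bisch-Jones free-composition construction, while the three finite cases should admit explicit realizations as equivariantizations of $A_4$ (or its dual) by twisted $\mathbb{Z}/2\mathbb{Z}$-actions, or equivalently as diagonal subfactors. Uniqueness should follow from the rigidity of the $A_4$ tensor category together with a skein-theoretic argument inside $\cP_\bullet$ showing that the extension data is pinned down by the principal graphs alone.
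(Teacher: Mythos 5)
First, note that the paper does not prove this theorem at all: it is imported wholesale from Liu \cite{1308.5691} (with \cite{1308.5723} conjecturing and partially proving the same statement), so there is no internal proof to compare against. Judged on its own terms, your outline gets the correct starting point --- the biprojection of trace $2$ gives an $A_3$ piece, the quotient has index $4\cos^2(\pi/5)$ hence is $A_4$, and the planar algebra therefore contains the Fuss--Catalan planar algebra $A_3 * A_4$ as a planar subalgebra, whose principal graph is $\cF\cC$. But from there the proposal has a real directional error: since Fuss--Catalan sits \emph{inside} $\cP_\bullet$, the simple $N$--$N$ bimodules of $\cP_\bullet$ \emph{refine} the Fuss--Catalan ones (minimal projections of the subalgebra decompose further, and inequivalent ones may also become equivalent), so the principal graph of $N\subset M$ is emphatically not ``a quotient of $\cF\cC$ obtained by identifying vertices.'' Indeed $\fish_1$ has three vertices at depth $2$ while $\cF\cC$ has two, so splitting genuinely occurs; an enumeration of vertex-identifications of $\cF\cC$ would never find the actual answers. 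The combinatorial step you correctly flag as ``the main obstacle'' is exactly where Liu's proof lives, and it is not a Diophantine check on dimensions in $\Integer[\phi]$ plus stability: one must control arbitrary additional generators in all box spaces of a planar algebra containing Fuss--Catalan, which requires the skein-theoretic/annular-multiplicity machinery of \cite{1308.5691}, not graph bookkeeping.

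The existence and uniqueness claims are also unsupported. The Fibonacci category $A_4$ has no nontrivial tensor autoequivalences, so an equivariantization of $A_4$ by a (possibly twisted) $\Integer/2$-action can only produce the degenerate composite corresponding to $\fish_1$; it cannot produce $\fish_2$ or $\fish_3$, which are genuinely exotic and whose constructions in \cite{1308.5691,1308.5723} require explicit generators and relations (jellyfish-type or Cuntz-algebra arguments). Likewise ``uniqueness from rigidity of $A_4$'' does not address the actual issue, which is showing that a planar algebra with one of these principal graph pairs is determined up to isomorphism by relations forced on its low-weight generators. So while the first paragraph of your plan is the standard and correct reduction, the heart of the theorem --- both the elimination of all other graphs and the construction of the three fish --- is missing, and the proposed mechanisms for filling it in would not work.
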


\begin{prop}\label{prop:BH}
Suppose $N\subset M$ is a hyperfinite {\rm II}$_1$-subfactor with $(\Gamma_+,\Gamma_-)$ an extension of
$$
\left(\bigraph{bwd1v1p1p1v0x1x0p0x0x1duals1v1x3x2},\bigraph{bwd1v1p1p1v0x0x1p0x0x1duals1v2x1x3}\right).
$$
Then $N\subset M$ is isomorphic to a Bisch-Haagerup subfactor $R^{\Integer/2}\subset R\rtimes \Integer/3$.
\end{prop}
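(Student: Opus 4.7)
The plan is to use the initial segment of the principal graphs to produce an intermediate subfactor, and then classify each half of the composition using classical low-index results. At index $6$, any nontrivial intermediate subfactor $N\subset P\subset M$ must have $[P:N]\cdot[M:P]=6$; the prescribed depth-$\le 2$ shape of $\Gamma_+$ is consistent only with $[P:N]=2$ and $[M:P]=3$.

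First I would produce the biprojection. The distinguished depth-$2$ vertex of $\Gamma_+$, connected by a single edge up to the unique depth-$1$ vertex, gives a minimal central projection $p\in\cP_{2,+}$ whose trace is forced to be $\tfrac{1}{2}$. The given low-depth branching is set up so that either Liu's virtual normalizer theorem (in the weakened form cited above) or a direct verification of Bisch's biprojection criterion applies, producing an intermediate $N\subset P\subset M$ with $[P:N]=2$ and $[M:P]=3$.

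For the two constituents: since $[P:N]=2$, Goldman's theorem gives $N=P^\sigma$ for some outer action $\sigma$ of $\bbZ/2\bbZ$ on $P$. Since $[M:P]=3$, the principal graph of $P\subset M$ is either $A_5$ or $D_4$; the branching of $\Gamma_+$ at depth $3$ together with the duality pattern specified on $\Gamma_-$ is inconsistent with a Temperley--Lieb ($A_5$) intermediate, leaving only $D_4$, whence $M=P\rtimes\alpha$ for some outer action $\alpha$ of $\bbZ/3\bbZ$.

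Combining these two stages, and noting that $P$ is a hyperfinite $\mathrm{II}_1$-factor (as an intermediate factor of a hyperfinite inclusion) and hence $P\iso R$, we obtain
\[
N\;=\;R^\sigma\;\subset\; R\;\subset\; R\rtimes\alpha\;=\;M,
\]
which is by definition a Bisch--Haagerup subfactor $R^{\bbZ/2\bbZ}\subset R\rtimes \bbZ/3\bbZ$. The main obstacle is reading off the intermediate graph in the third step: one must show that the depth-$3$ branching of $\Gamma_+$ and the duality structure on $\Gamma_-$ force the intermediate index-$3$ inclusion to have $D_4$ rather than $A_5$ principal graph, which requires a careful analysis of how the bimodules for $N\subset M$ restrict along the biprojection to give bimodules for $P\subset M$.
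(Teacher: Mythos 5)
There is a genuine gap: you never establish that the index is $6$. The proposition assumes only that $(\Gamma_+,\Gamma_-)$ is an \emph{extension} of the displayed truncated pair, and a truncation does not determine $\delta^2$; indeed, part of the content of the proposition (as it is used in the proof of Theorem \ref{thm:MainHelper}) is precisely that any such extension is forced to have index exactly $6$. Your argument opens with ``At index $6$, any nontrivial intermediate subfactor must have $[P:N]\cdot[M:P]=6$'' and everything downstream (the $[M:P]=3$ claim, the $A_5$/$D_4$ dichotomy) depends on this. If you only know $[P:N]=2$ from the biprojection, then $[M:P]=\delta^2/2$ is an unknown quantity and the index-$3$ classification is unavailable. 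The paper closes this gap with a quantitative step you are missing: taking the self-dual depth-$2$ vertex $P=\id_{2,-}-e_1-\alpha-\beta$ of $\Gamma_-$ (where $\alpha,\beta$ are the univalent depth-$2$ vertices), one computes $P\circ P=(\delta-\tfrac{6}{\delta})\id_{2,-}+3(e_1+\alpha+\beta)$, and Lemma \ref{lem:ThicknessBound} bounds $\thick(P\circ P)$ by the number of length-two paths from $[P]$ to itself, namely $3$; this forces the coefficient $\delta-\tfrac{6}{\delta}$ to vanish, i.e.\ $\delta^2=6$. Some argument of this kind is indispensable.

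Two smaller points. First, the trace of the minimal central projection attached to the univalent depth-$2$ vertex of $\Gamma_+$ is $1/\delta^2$ (its dimension is $1$ by Frobenius--Perron), not $\tfrac12$; the associated biprojection $e_1+p$ has trace $2/\delta^2$, which is what yields $[P:N]=2$. Second, your identification of the index-$3$ piece is left as an acknowledged ``obstacle'' ($D_4$ versus $A_5$ from the depth-$3$ branching), and this is exactly the part that needs to be done carefully. The paper sidesteps it by working on the dual side: the biprojection $e_1+\alpha+\beta\in\cP_{2,-}$ gives $M\subset Q\subset\langle M,e_N\rangle$ with $[Q:M]=3$, and the relative commutant $M'\cap\langle Q,e_M^Q\rangle\cong(e_1+\alpha+\beta)\cP_{2,-}(e_1+\alpha+\beta)$ is seen to be $3$-dimensional because $\alpha,\beta$ are fixed by the compression; hence $M\subset Q$ has depth $2$ and is the $\Integer/3$ crossed product by the classification of $3$-dimensional Kac algebras. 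If you prefer to stay on the $\Gamma_+$ side you would need to supply the restriction-of-bimodules analysis you defer, or reproduce an analogous relative-commutant computation.
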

Recall that an extension of $\Gamma$ is a graph of greater depth which truncates to $\Gamma$.
\begin{proof}
Denote the two univalent vertices at depth 2 of $\Gamma_-$ by $\alpha, \beta$. One can show similarly to \cite[Lemma 4.7]{1308.5723} that $e_1+\alpha+\beta$ is a biprojection corresponding to an intermediate subfactor $M\subset Q \subset \langle M, e_N\rangle$ with $[Q:M]=3$.
Denote the self-dual vertex at depth 3 of $\Gamma_-$ by $P$, and note that
$$
P = \id_{2,-}-e_1 -\alpha-\beta.
$$
By Lemma \ref{lem:ThicknessBound}, $P\circ P$ is a positive linear combination of at most 3 minimal projections in $\cP_{2,-}$. A straightforward computation shows
$$
P\circ P = \left(\delta - \frac{6}{\delta}\right) \id_{2,-} + 3(e_1+\alpha+\beta),
$$
so we must have that $\delta-\frac{6}{\delta}=0$, i.e., $\delta^2=6$.

Recall that $M'\cap \langle Q, e_M^Q\rangle$ can be identified with $(e_1+\alpha+\beta)\cP_{2,-}(e_1+\alpha+\beta)$ by \cite{BhattacharyyaLandau}, and $\alpha,\beta$ are fixed by $(e_1+\alpha+\beta)$. Hence $\dim(M'\cap \langle Q, e_M^Q\rangle)=3$, so $M\subset Q$ is isomorphic to the $\Integer/3$-subfactor $R\subset R\rtimes \Integer/3$.

Now $Q$ corresponds to an intermediate subfactor $N\subset P\subset M$ where $[P: N]=2$ and $[M:P]=3$.
By Goldman's Theorem \cite{MR0107827}, $N\subset P$ is isomorphic to the $\Integer/2$-fixed point subfactor $R^{\Integer/2}\subset R$.
Hence $N\subset M$ is a Bisch-Haagerup subfactor of the form $R^{\Integer/2}\subset R\rtimes \Integer/3$.
\end{proof}

\begin{remark}
Subfactors of index 6 with an intermediate subfactor are wild.
By \cite{MR2314611}, there is a continuous family of non-isomorphic, hyperfinite Bisch-Haagerup subfactors at index 6 with the same standard invariant.
In fact, by \cite{1309.5354}, hyperfinite Bisch-Haagerup subfactors at index 6 with standard invariant $A_3*D_4$ are not classifiable by countable structures.
\end{remark}

\subsection{Results on subfactor planar algebras generated by a single 2-box}

\begin{thm}[Bisch-Jones \cite{MR1972635}]\label{thm:Diamond}
If $\cP_\bullet$ is a subfactor planar algebra generated by a single $2$-box such that $\dim(\cP_{3})=13$, then $\cP_\bullet$ is the subgroup subfactor for $R\rtimes \Integer/2\subset R\rtimes D_5$. Thus $\cP_\bullet$ has principal graphs
$$
\left(\bigraph{bwd1v1p1v1x1v1duals1v1x2v1},\bigraph{bwd1v1p1v1x1v1duals1v1x2v1}\right).
$$
\end{thm}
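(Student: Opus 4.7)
The plan is to bootstrap from the dimension constraint $\dim(\cP_3)=13$ to the principal graph of $\cP_\bullet$ and then identify the resulting planar algebra as the subgroup subfactor for $R\rtimes\Integer/2\subset R\rtimes D_5$.

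First, I would determine $\Gamma_+$ at depth $2$. Writing $\dim(\cP_2)=1+\sum_i m_i^2$, where $m_i$ is the number of edges from the unique depth-$1$ vertex $v_1$ to the $i$-th new depth-$2$ vertex, and using that $\cP_\bullet\neq\TL_\bullet$ (since $\dim(\cP_3)=13>5=\dim(\TL_3)$), the single-$2$-box hypothesis should force $\dim(\cP_2)=3$: two new depth-$2$ vertices $P,Q$, each joined to $v_1$ by a single edge.

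Second, using $\dim(\cP_3)=\sum_{v\in\Gamma_+(3)}a_3(v)^2$, where $a_3(v)$ counts length-$3$ walks from $\ast$ to $v$ on $\Gamma_+$, the vertex $v_1$ reappears with $a_3(v_1)=1+1+1=3$, contributing $9$; the remaining $13-9=4$ must come from new depth-$3$ vertices. The only arithmetic possibilities are $4=2^2$ (one new vertex $v_3$ with one edge each to $P$ and $Q$) or $4=1+1+1+1$ (four new vertices, each singly attached to $P$ or $Q$). The latter can be ruled out either by a direct Perron--Frobenius computation (the resulting graph would have index in a range incompatible with singly generated $\cP_2$ at this dimension) or by comparing the Markov moments of the trace on the generating $2$-box against those implied by the candidate graph. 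The former yields index $\delta^2=5$ and fixes $\Gamma_+$ through depth $3$.

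Third, I would close off the graph using the depth-stability results of \cite{MR1334479,1208.1564} together with the fact that, at index exactly $5$, only finitely many principal graphs are admissible. The unique extension adds one further vertex at depth $4$ adjacent to $v_3$, giving the claimed graph. A parallel argument pins down $\Gamma_-$ and the duality data.

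Finally, for uniqueness and identification, I would construct the bi-unitary connection on the principal graph pair and argue, up to gauge equivalence, that there is a unique flat connection; recognizing it as the connection arising from the subgroup pair $\Integer/2\subset D_5$ (of index $|D_5|/|\Integer/2|=5$) then exhibits $\cP_\bullet$ as the subgroup subfactor planar algebra for $R\rtimes\Integer/2\subset R\rtimes D_5$. The main obstacle is the second step: ruling out the $4=1+1+1+1$ decomposition via dimension counting alone is insufficient, and one must invoke additional structure from the single-$2$-box hypothesis, for instance by analyzing the minimal projections in $\cP_3$ directly or by exploiting the Markov moment constraints coming from the generator.
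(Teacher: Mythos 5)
This theorem is not proved in the paper at all --- it is imported wholesale from Bisch--Jones \cite{MR1972635}, whose actual argument is a lengthy algebraic analysis of $\cP_3$ as a multimatrix algebra (using the rotation, the two embeddings of $\cP_2$, the coproduct, and the Markov trace), not a principal-graph enumeration. Your outline is therefore not ``the same proof,'' and more importantly it has genuine gaps at exactly the points where the real work lives. The central one, which you yourself flag, is the case $\dim(\cP_3)=9+1+1+1+1$, i.e.\ $\cP_3\iso M_3\oplus\Complex^4$ with four singly-attached depth-$3$ vertices. Nothing in your sketch rules this out: ``a direct Perron--Frobenius computation'' cannot work because the graph truncated to depth $3$ does not determine the index (the graph can keep growing, and $\dim(\cP_3)$ says nothing about depth $4$), and ``comparing Markov moments'' is precisely the hard single-$2$-box analysis that constitutes most of the Bisch--Jones paper. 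Note also that Fuss--Catalan realizes the neighbouring configuration $9+1+1+1$ at $\dim(\cP_3)=12$ for a continuum of indices, so there is no cheap numerical obstruction to one more univalent vertex. Your case list is also incomplete: $a_3(v)=2$ can arise from a double edge from $P$ (or $Q$) to a single new vertex, not only from single edges to both.

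The same circularity infects the third step. You assert that the diamond ``yields index $\delta^2=5$,'' but that only follows once you know the graph terminates one vertex past the diamond; conversely your termination argument assumes index exactly $5$. The stability theorem (Theorem \ref{thm:PrincipalGraphStability}) does not help here: the diamond is \emph{not} stable at depth $2$ (both depth-$2$ vertices meet the same depth-$3$ vertex), so one would need stability at depths $3$ and $4$, which is part of what must be proved. If the diamond only terminated at depth $3$ one gets $\delta^2=(5+\sqrt{17})/2$ and non-algebraic-integer dimensions, which is fine to exclude, but the possible depth-$4$ extensions are a priori unbounded without an index bound, and no index bound follows from $\dim(\cP_3)=13$ alone. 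Finally, the identification with $R\rtimes\Integer/2\subset R\rtimes D_5$ via ``a unique flat connection'' is again only asserted. In short: the dimension bookkeeping in steps $1$--$2$ is fine as far as it goes, but every step that actually uses the single-$2$-box hypothesis --- which is where the theorem's content is --- is left as a declaration of intent.
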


\begin{cor}\label{cor:Diamond}
If the principal graph of a subfactor planar algebra $\cP_\bullet$ is an extension of
$$
\left(\bigraph{gbg1v1p1v1x1},\bigraph{gbg1v1p1v1x1}\right),
$$
then $\cP_\bullet$ is the planar algebra of the subgroup subfactor $R\rtimes \Integer/2\subset R\rtimes D_5$ with
$$
(\Gamma_+,\Gamma_-)=\left(\bigraph{bwd1v1p1v1x1v1duals1v1x2v1},\bigraph{bwd1v1p1v1x1v1duals1v1x2v1}\right).
$$
\end{cor}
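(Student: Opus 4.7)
The plan is to invoke Theorem \ref{thm:Diamond} (Bisch-Jones), which identifies a subfactor planar algebra as the one for $R\rtimes \Integer/2 \subset R\rtimes D_5$ provided it is generated by a single 2-box with $\dim(\cP_3) = 13$. So I need to verify these two hypotheses from the diamond-extension assumption on the principal graph.

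For the single 2-box generator, the diamond truncation provides a new minimal projection $p \in \cP_{2,+}$ corresponding to the unique non-Temperley-Lieb vertex at depth 2. The sub-planar algebra $\langle p, \TL_\bullet \rangle$ must have its depth-2 truncation containing the diamond (it already contains $p$ and the Jones projection $e_1$), so its own principal graph extends the diamond. I would argue this forces $\langle p, \TL_\bullet \rangle = \cP_\bullet$, using the rigidity established in the next step (namely, that there is essentially a unique extension of the diamond to a valid principal graph).

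For $\dim(\cP_3) = 13$, I would compute $\dim(\cP_{3,+}) = (A^6)_{\star,\star}$ via closed walks of length 6 on $\Gamma_+$. Walks staying in the depth-$\le 2$ truncation are determined by the diamond, while those visiting depth 3 depend on the extension. The key is to show, using the modulus equation together with positivity of the Perron-Frobenius weights, that the only valid extension of the diamond has exactly one depth-3 vertex (attached to the apex of the diamond), with no further vertices at deeper levels. This pins down the principal graph as $\bigraph{bwd1v1p1v1x1v1duals1v1x2v1}$, from which the dimension calculation gives 13.

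The main obstacle is this rigidity analysis ruling out alternative extensions: any graph with more than one depth-3 vertex, or with vertices at depth $\geq 4$, must be excluded. I expect this amounts to combining the Perron-Frobenius eigenvalue equation on the adjacency matrix (with its second-moment constraints) with the positivity of the dimensions of the bimodules at each vertex; nontrivial extensions rapidly overshoot or become inconsistent with the modulus $[2]$ forced by the diamond. Once established, Theorem \ref{thm:Diamond} immediately yields the conclusion and identifies $\cP_\bullet$ with its stated principal graphs.
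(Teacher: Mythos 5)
Your reduction to Theorem \ref{thm:Diamond} is the right instinct, and the dimension count is actually easier than you make it: since $\Gamma_+$ is by hypothesis an \emph{extension} of the diamond, its truncation to depth $3$ \emph{is} the diamond, and length-$6$ loops at $\star$ never see anything deeper, so $\dim(\cP_{3,+})=(A^6)_{\star\star}=13$ with no rigidity analysis whatsoever. The ``key obstacle'' you describe --- ruling out alternative extensions by Perron--Frobenius positivity --- is both unnecessary for this count and unlikely to succeed as stated, since the corollary carries no index bound, so there is no a priori reason a longer tail or extra branches at some other modulus would violate positivity; in the paper that elimination is exactly what the quoted classification theorems are doing for you.

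The genuine gap is the other hypothesis of Theorem \ref{thm:Diamond}: that $\cP_\bullet$ is generated by a single $2$-box. Your argument for $\langle p,\TL_\bullet\rangle=\cP_\bullet$ is circular and, even granting the rigidity step, does not close. Knowing that $\cQ_\bullet:=\langle \cP_{2,+}\rangle$ contains $p$ and $e_1$ only controls its depth-$2$ truncation; it does \emph{not} follow that the principal graph of $\cQ_\bullet$ extends the diamond. A priori $\cQ_\bullet$ could have $\dim(\cQ_{3,+})<13$ with a different depth-$3$ structure (e.g.\ Fuss--Catalan-like, with the two depth-$2$ vertices feeding separate depth-$3$ vertices), in which case Theorem \ref{thm:Diamond} --- which requires $\dim$ exactly $13$ --- does not apply to $\cQ_\bullet$, and you cannot conclude $\cQ_\bullet=\cP_\bullet$. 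The paper instead applies the Bisch and Bisch--Jones classification of planar algebras generated by a single $2$-box with $\dim(\cP_3)\leq 13$ to $\cQ_\bullet$ (which is singly generated by construction), obtaining three candidates: $R\subset R\rtimes\Integer/3$, the $D_5$ subgroup subfactor, or Fuss--Catalan. The first is excluded by Frobenius--Perron dimensions, and Fuss--Catalan is excluded because the diamond forces the two depth-$2$ vertices of $\Gamma_+$ to have equal dimension, i.e.\ branch factor $r=1$, which is incompatible with Fuss--Catalan's annular multiplicities $*10$ or $*11$. Only then does one get $\cQ_\bullet=\cP_\bullet$ (via finite-depth considerations as in the proof of Corollary \ref{cor:BMW}). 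Without an appeal to the $\leq 13$ classification, or some independent argument that $\cQ_{3,+}=\cP_{3,+}$, your proof does not go through.
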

\begin{proof}
Let $\cQ_\bullet$ be the planar subalgebra of $\cP_\bullet$ generated by $\cP_{2,+}$.
Then $\dim(\cQ_{2,+})\leq 13$, so $\cQ_{\bullet}$ is one of the following subfactors by \cite{MR1733737,MR1972635}:
\begin{enumerate}
\item the group subfactor $R\subset R\rtimes \Integer/3$,
\item the subgroup subfactor $R\rtimes \Integer/2\subset R\rtimes D_5$, or
\item Fuss-Catalan.
\end{enumerate}
The first is impossible by the Frobenius-Perron dimensions, and if $\cQ_\bullet$ is the second, we are finished.
Note that the dimensions of the bimodules at depth 2 must be equal by the shape of the graph, so the branch factor must be $r=1$.
But this is not the case for any Fuss-Catalan planar algebra by \cite{1307.5890}, since Fuss-Catalan has annular multiplicities $*10$ or $*11$.
\end{proof}

\begin{thm}[{\cite{LiuSinglyGenerated}}]\label{thm:BMW}
If $\cP_\bullet$ is a subfactor planar algebra generated by a single $2$-box such that $\dim(\cP_{3})=14$, then the principal graphs of $\cP_\bullet$ are extensions of
$$
\left( \bigraph{bwd1v1p1v1x0p1x1duals1v1x2}, \bigraph{bwd1v1p1v1x0p1x1duals1v1x2}\right),
$$
and $\cP_\bullet$ is a BMW planar algebra in the sense of  \cite{MR1090432}.
In particular, $\cP_\bullet$ is either the depth 3 trivial extension coming from $O(3,\Real)$, or $\cP_\bullet$ comes from $Sp(4,\Real)$ (the case $n=-5$ where $r=q^n$ and $q=e^{\pi i/\ell}$ with $\ell\geq 12$ even \cite[Table 1]{MR1090432}), and has index
$$
\left(\frac{\sin\left(|n| \frac{\pi}{\ell}\right)}{\sin\left(\frac{\pi}{\ell}\right)}-1\right)^2 > 7.4641.
$$
\end{thm}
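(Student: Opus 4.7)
The plan is to pin down the truncation of $(\Gamma_+,\Gamma_-)$ to depth $3$ by dimension counting, then extract from the structure of $\cP_{3,+}$ a BMW-type skein relation for the generating $2$-box, and finally appeal to the classification of BMW planar algebras at small index.

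First, I would observe that the singly-generated hypothesis forces $\dim(\cP_{2,+})=3$ (Temperley-Lieb contributes $2$, the generator contributes $1$). Using the path-counting formula $\dim(\cP_{n,+})=\sum_v N_v^2$, where $N_v$ counts length-$n$ paths on $\Gamma_+$ from the depth-$0$ vertex to $v$, together with irreducibility ($\dim(\cP_{1,\pm})=1$), I would deduce that at depth $2$ there are exactly two vertices, each joined to the unique depth-$1$ vertex by a single edge. Applied at $n=3$, the contribution of loops through depth $1$ is $3^2=9$, forcing $\sum_w N_w^2 = 5$ over depth-$3$ vertices $w$. The only integer splittings are $4+1$ (the claimed graph) and $1+1+1+1+1$. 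I would rule out the latter using singly-generatedness, e.g.\ by arguing that the induced $3$-box algebra structure would require a new $2$-box generator not present in $\cP_{2,+}$.

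Second, I would extract a BMW skein relation. Let $R\in\cP_{2,+}$ be a minimal projection complementary to $e_1$. Every element of $\cP_{3,+}$ is by hypothesis a linear combination of tangles in $R$, and the Bratteli decomposition $\dim\cP_{3,+}=3^2+2^2+1^2=14$ pins down the space. Expanding a generic $3$-strand composition of $R$ (mimicking a braid generator on strands $2$ and $3$) in the Bratteli basis should force both a quadratic relation and a resolution-of-crossings identity; these are precisely the defining relations of a BMW algebra in the sense of \cite{MR1090432}. I expect this step to be the main obstacle: one must determine the coefficients in the Bratteli basis and verify that no further hidden relations collapse the $14$-dimensional space.

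Finally, I would invoke the BMW classification to identify the remaining parameters. By \cite{MR1090432}, BMW planar algebras admitting the truncation above are exhausted, up to the parameters $(q,r)$, by two families relevant here: the depth-$3$ trivial extension from $O(3,\Real)$, and the $Sp(4,\Real)$ specialization at $r=q^{-5}$, $q=e^{\pi i/\ell}$ with $\ell\geq 12$ even. The index formula stated in the theorem is then a direct Frobenius-Perron computation from the fusion rules at this root of unity, giving the stated bound greater than $7.4641$ in the $Sp(4,\Real)$ case.
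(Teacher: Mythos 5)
This theorem is not proved in the paper at all: it is quoted verbatim from Liu's work \cite{LiuSinglyGenerated} (the singly-generated planar algebra classification at $\dim(\cP_3)=14$), so there is no in-paper argument to compare your proposal against. Judged on its own, your outline reproduces the standard Bisch--Jones strategy and gets the easy bookkeeping right, but both places where the actual mathematical content lives are left as acknowledged gaps, so this is a plan rather than a proof.

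Two specific issues. First, a small but real error in the setup: being generated by a single $2$-box does \emph{not} force $\dim(\cP_{2,+})=3$ (group planar algebras of cyclic groups are singly generated with large $\cP_2$). What forces it here is the hypothesis $\dim(\cP_3)=14$: counting length-$3$ paths ending at the depth-$1$ vertex shows $\dim(\cP_3)\geq(\dim(\cP_2))^2$, so $\dim(\cP_2)\leq 3$, and $\dim(\cP_2)=2$ would give Temperley--Lieb. Also, in your case analysis of $\sum_w N_w^2=5$ you omit the configuration where a single depth-$3$ vertex is joined to a depth-$2$ vertex by a double edge (which also contributes $4$), and your elimination of the $1+1+1+1+1$ case is asserted rather than argued. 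Second, and more seriously, the step you flag as ``the main obstacle'' --- deriving the quadratic and exchange/triangle relations that characterize BMW from the $14$-dimensionality of $\cP_3$, and showing no further relations collapse the algebra --- is the entire substance of Liu's theorem. It is not a routine Bratteli-basis computation: one must analyze the possible algebra structures on $\cP_{3,+}$ compatible with the embedding of the two shifted copies of $\cP_{2,+}$ and Temperley--Lieb, and this occupies most of \cite{LiuSinglyGenerated}. Without that analysis, the identification with BMW (and hence the parameter classification and index bound from \cite{MR1090432}) does not follow.
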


\begin{remark}
Note that the $q$ for the BMW planar algebras is \underline{not} the same $q$ such that $[2]=q+q^{-1}$.
\end{remark}

\begin{cor}\label{cor:BMW}
If the principal graph of a subfactor planar algebra $\cP_\bullet$ is an extension of
$$
\bigraph{gbg1v1p1v1x0p1x1},
$$
and $\delta^2<7.4641$, then $\cP_\bullet$ is the unique self-dual subfactor planar algebra with
$$
(\Gamma_+,\Gamma_-)=
\left(
\bigraph{bwd1v1p1v1x0p1x1duals1v1x2},
\bigraph{bwd1v1p1v1x0p1x1duals1v1x2}
\right)
$$
(uniqueness and self-duality follows from \cite{MR1090432,1205.2742,LiuSinglyGenerated}).
\end{cor}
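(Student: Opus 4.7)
The plan follows the template of Corollary \ref{cor:Diamond}. Let $\cQ_\bullet$ be the planar subalgebra of $\cP_\bullet$ generated by $\cP_{2,+}$. From the shape of the depth-$2$ truncation $\bigraph{gbg1v1p1v1x0p1x1}$, we read $\dim(\cQ_{2,+}) = 3$, and a combinatorial loop count from the depth-$3$ structure of the extension gives $\dim(\cQ_{3,+}) \leq 14$. By the classification of subfactor planar algebras generated by a single $2$-box \cite{MR1733737,MR1972635,LiuSinglyGenerated}, $\cQ_\bullet$ must be one of: the group subfactor $R \subset R \rtimes \Integer/3$; the subgroup subfactor $R \rtimes \Integer/2 \subset R \rtimes D_5$; a Fuss-Catalan planar algebra; or a BMW planar algebra.

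The first two cases are ruled out by inspection of the depth-$2$ structure: the group subfactor has the wrong Frobenius-Perron dimensions, while the $R \rtimes D_5$ subgroup subfactor has the symmetric depth-$2$ shape $\bigraph{gbg1v1p1v1x1}$ (the case of Corollary \ref{cor:Diamond}), rather than the asymmetric shape $\bigraph{gbg1v1p1v1x0p1x1}$ required here. For the Fuss-Catalan case, the asymmetric shape forces the two depth-$2$ bimodules to have distinct dimensions, so the branch factor $r \neq 1$; but this is incompatible with the Fuss-Catalan annular multiplicity pattern ($*10$ or $*11$) established in \cite{1307.5890}. Hence $\cQ_\bullet$ is a BMW planar algebra, so by Theorem \ref{thm:BMW} it has principal graph pair $\left(\bigraph{bwd1v1p1v1x0p1x1duals1v1x2}, \bigraph{bwd1v1p1v1x0p1x1duals1v1x2}\right)$.

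Next, the hypothesis $\delta^2 < 7.4641$ eliminates the $Sp(4,\Real)$ BMW family, whose index exceeds $7.4641$, leaving $\cQ_\bullet$ as the depth-$3$ trivial extension coming from $O(3,\Real)$. To conclude $\cP_\bullet = \cQ_\bullet$, observe that $\cQ_\bullet$'s principal graph already terminates at depth $3$ with precisely the shape demanded in the conclusion, and $\cP_{2,+} = \cQ_{2,+}$. Any strict containment $\cQ_\bullet \subsetneq \cP_\bullet$ would require additional projections at depth $\geq 3$, contradicting the depth-$3$ completeness of the BMW structure already present (and inconsistent with the loop-count bound on $\dim(\cP_{3,+})$). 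Uniqueness and self-duality then follow directly from \cite{MR1090432,1205.2742,LiuSinglyGenerated}.

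The main obstacle I expect is the Fuss-Catalan exclusion, which must invoke the branch factor analysis together with the annular-multiplicity constraint of \cite{1307.5890}; everything else is essentially dictated by Theorem \ref{thm:BMW} and the index inequality.
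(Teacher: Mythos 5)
Your overall skeleton (pass to the singly generated subalgebra $\cQ_\bullet$, invoke the $\dim \leq 14$ classification, kill the group/subgroup cases by Frobenius--Perron, kill $Sp(4,\Real)$ by the index bound) matches the paper, but your Fuss--Catalan exclusion is wrong, and it inverts the logic of Corollary \ref{cor:Diamond}. In that corollary the contradiction is that the symmetric graph forces branch factor $r=1$, whereas Fuss--Catalan \emph{never} has $r=1$ (that is what the annular multiplicities $*10$ or $*11$ from \cite{1307.5890} are used to show). So here, where the asymmetric shape gives $r\neq 1$, you are in the situation that is \emph{consistent} with Fuss--Catalan, and the annular-multiplicity argument gives you nothing. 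Worse, the index hypothesis does not save you either: the paper's actual argument shows that Frobenius--Perron dimensions at depth 2 force $A_m * A_n$ with $m,n>3$, and the index restriction then forces $m=n=4$, giving $\delta^2=(4\cos^2(\pi/5))^2\approx 6.8541$, which is \emph{below} your cutoff $7.4641$. The case is only eliminated because the Fuss--Catalan biprojection produces an intermediate subfactor, so $\cP_\bullet$ would be a composed inclusion of two $A_4$ subfactors at index $3+\sqrt{5}$, and the complete classification of such compositions in \cite{1308.5691} contains nothing with the required principal graph. Without this step your proof has a live, unexcluded case.

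Two smaller points. First, the paper splits on $\dim(\cQ_{3,+})=14$ versus $\leq 13$ before invoking Theorem \ref{thm:BMW}, since that theorem needs $\dim=14$ exactly; your phrasing blurs this, though it is repairable. Second, your argument that $\cP_\bullet=\cQ_\bullet$ ("depth-$3$ completeness of the BMW structure") is not a proof: a priori $\cP_\bullet$ could strictly contain $\cQ_\bullet$ in boxes of higher depth without disturbing $\cP_{3,+}$. The paper closes this by citing the fact that a planar algebra containing a finite-depth planar subalgebra has depth bounded by that of the subalgebra (\cite[Corollary 3.11]{1208.1564}); you need that result or an equivalent.
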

\begin{proof}
Let $\cQ_\bullet$ be the planar subalgebra of $\cP_\bullet$ generated by $\cP_{2,+}$.
Then $\dim(\cQ_{3,+})\leq 14$.
If $\dim(\cQ_{3,+})=14$, then $\cQ_{3,+}=\cP_{3,+}$, and by Theorem \ref{thm:BMW} and the index restriction, $\cQ_\bullet$ must be the unique subfactor planar algebra with principal graphs
$$
\left(
\bigraph{bwd1v1p1v1x0p1x1duals1v1x2},
\bigraph{bwd1v1p1v1x0p1x1duals1v1x2}
\right).
$$
In this case, since $\cQ_\bullet$ is finite depth, we know $\cP_\bullet$ has depth at most the depth of $\cQ_\bullet$ (e.g., by \cite[Corollary 3.11]{1208.1564}), so $\cP_\bullet=\cQ_\bullet$.

Otherwise, $\dim(\cQ_{3,+})\leq 13$, and $\cQ_{\bullet}$ is one of the 3 subfactors in the proof of Corollary \ref{cor:Diamond}. Cases (1) and (2) are impossible by the Frobenius-Perron dimensions.
If $\cQ_\bullet$ is Fuss-Catalan $A_m * A_n$, then by examining the Frobenius-Perron dimensions at depth 2, $m,n>3$.
By the index restriction, we must have $m=n=4$, so $\delta^2=(4\cos^2(\frac{\pi}{5}))^2\approx 6.8541$.
Hence $\cP_\bullet$ has a biprojection corresponding to an intermediate subfactor, so it must be a composed inclusion of two $A_4$ subfactors. 
But these are completely classified by \cite{1308.5691}, so this case is impossible.
\end{proof}

\subsection{Results on principal graph stability}

Recall the following definition from \cite{MR1334479,1208.1564}.

\begin{defn}
A principal graph $\Gamma$ is said to be \emph{stable at depth $n$} if every vertex at depth $n$ connects to at most one vertex at depth $n+1$, no two vertices at depth $n$ connect to the same vertex at depth $n+1$, and all edges between depths $n$ and $n+1$ are simple.
We say $(\Gamma_+,\Gamma_-)$  is \emph{stable at depth $n$} if both $\Gamma_+$ and $\Gamma_-$ are stable at depth $n$.
\end{defn}

\begin{thm}[{Principal Graph Stability \cite[Theorem 4.5]{MR1334479}, \cite[Theorem 1.3]{1208.1564}}]\label{thm:PrincipalGraphStability}
\mbox{}
\begin{enumerate}[(1)]
\item
If $(\Gamma_+,\Gamma_-)$ is stable at depth $n$, the truncation $\Gamma_\pm(n+1)\neq A_{n+2}$, and $\delta>2$, then $(\Gamma_+,\Gamma_-)$ is stable at depth $k$ for all $k\geq n$, and $\Gamma_+,\Gamma_-$ are finite.
\item
If $\Gamma_+$ is stable at depths $n$ and $n+1$, the truncation $\Gamma_+(n+1)\neq A_{n+2}$, and $\delta>2$, then $(\Gamma_+,\Gamma_-)$ is stable at depth $k$ for all $k\geq n+1$, and $\Gamma_+,\Gamma_-$ are finite.
\end{enumerate}
\end{thm}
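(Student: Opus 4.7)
The plan is to translate the combinatorial stability hypothesis into a planar-algebraic statement about the minimal projections at depth $n$, and then propagate it inductively. Stability of $\Gamma_+$ at depth $n$ says that for each minimal projection $p \in \cP_{n,+}$ corresponding to a vertex $v$ at depth $n$, either $v$ has no child at depth $n+1$, or $v$ has a unique child $v'$ joined by a simple edge, and distinct vertices at depth $n$ have distinct children at depth $n+1$. In planar-algebra terms, the new minimal projection $p'\in \cP_{n+1,+}$ obtained from $p$ by adding a strand and chopping off the unique child of $p$ inside $\TL$-style structure satisfies a ``jellyfish''-type identity: the strand at the top of $p'$ can be pushed through $p'$ modulo lower-order corrections supported on the projections already present at depth $n$.

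The main step is to prove propagation: if stability holds at depth $n$, then it holds at depth $n+1$. The idea is to take a candidate new minimal projection $p'$ at depth $n+1$ and compute its two-strand expansion at depth $n+2$. Using the jellyfish identity at depth $n$, one shows that all potential new vertices at depth $n+2$ that could be attached to $p'$ are either forced to coincide with previously-defined ones (violating simplicity in a way that contradicts trace positivity) or must detach, giving at most one new child. For part (1), one also needs the analogous argument on $\Gamma_-$, which is symmetric because the hypothesis is on both graphs. For part (2), where we only assume stability of $\Gamma_+$ at depths $n$ and $n+1$, the extra hypothesis at depth $n+1$ is exactly what is needed so that applying the Fourier transform (which swaps $\Gamma_+$ and $\Gamma_-$) lets us transfer stability of $\Gamma_+$ to stability of $\Gamma_-$ at depth $n+1$; then we are back in the situation of part (1) starting from depth $n+1$.

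Finiteness then follows from a Perron--Frobenius argument. Once stability propagates for all depths $\geq n$ (resp.\ $\geq n+1$), the portion of $\Gamma_\pm$ beyond depth $n$ is a disjoint union of ``tails'' each of which is a simple path, i.e.\ a piece of an $A_\infty$ graph sprouting off a vertex at depth $n$. Any infinite tail would have operator norm at least $2$ as a subgraph of the principal graph, which is inconsistent with the subfactor norm equaling $\delta$ after one accounts for the finite initial segment; more precisely, the Frobenius--Perron weights along an infinite simple tail grow like a multiple of $\sin(k\theta)$ with $\delta = 2\cos\theta$, and positivity of the weights combined with $\delta > 2$ (so $\theta$ is imaginary, giving exponential rather than oscillatory behaviour) forces only finitely many of them to be realizable as dimensions of bimodules, so every tail must truncate. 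Hence $\Gamma_\pm$ is finite.

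I expect the propagation step to be the main obstacle. The translation of ``stability at depth $n$'' into a usable jellyfish-style relation on the depth-$n$ projections requires care with normalizations and with handling the dual data (since $\Gamma_-$ enters via the Fourier transform), and verifying that the candidate child at depth $n+1$ is either a genuine new minimal projection or zero requires a nontrivial positivity argument using the trace. This is precisely where the two cited proofs differ in technique: \cite{MR1334479} handles it via the standard-invariant/Jones-tower apparatus, while \cite{1208.1564} does it diagrammatically via cylindrical Temperley--Lieb computations; either approach is viable, but each demands a careful bookkeeping lemma that would need to be established before the induction can start.
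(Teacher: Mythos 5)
First, a point of order: the paper does not prove this theorem at all---it is imported from Popa (MR1334479) and Bigelow--Penneys (arXiv:1208.1564)---so there is no internal proof to compare against, and your argument has to stand on its own. As written it is a strategy outline rather than a proof, and it has two genuine gaps.

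The first you acknowledge yourself: essentially the entire content of the theorem is the propagation step, i.e., the translation of combinatorial stability at depth $n$ into a usable jellyfish-type relation and the verification that this forces stability at depth $n+1$; and, for part (2), the transfer of stability from $\Gamma_+$ to $\Gamma_-$ at depth $n+1$. You describe what such a lemma should say but do not prove it. In particular, ``the Fourier transform swaps $\Gamma_+$ and $\Gamma_-$'' is not a formal triviality here: the one-click rotation is an isomorphism of the box spaces $\cP_{k,+}\cong\cP_{k,-}$, not of the fusion data that defines the two principal graphs, so deducing vertex-level stability of $\Gamma_-$ from that of $\Gamma_+$ requires an actual dimension-counting argument (this is where Bigelow--Penneys spend most of their effort). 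Deferring this lemma is deferring the theorem.

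The second gap is that your finiteness argument fails. Along an infinite stable tail the dimensions satisfy $d_{k+1}=\delta d_k-d_{k-1}$, so $d_k=Aq^{k}+Bq^{-k}$ with $q+q^{-1}=\delta$ and $q>1$; when $A>0$ these are positive for all $k$ and grow exponentially, so positivity of the Frobenius--Perron weights rules out nothing, and there is no general integrality constraint forcing such weights to be unrealizable as bimodule dimensions. Taken at face value your argument would also ``prove'' that the $A_\infty$ principal graph cannot occur at index greater than $4$, which is false---it is excluded in the theorem only by the explicit hypothesis $\Gamma_\pm(n+1)\neq A_{n+2}$. Similarly, an infinite simple path is a subgraph of norm $2<\delta$, which is not inconsistent with the ambient graph having norm $\delta$. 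Finiteness is a genuinely nontrivial conclusion of the cited results and comes from a different mechanism (the generation-in-bounded-degree/relations structure forced by stability), not from a positivity estimate on a single tail.
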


\begin{defn}
A \underline{cylinder} is a stable principal graph, i.e., it is a principal graph $(\Gamma_+,\Gamma_-)$ such that $(\Gamma_+,\Gamma_-)$ is stable at depth $n$ for some $n$.
\end{defn}

Recall from \cite{MR2914056} that we prove `classification statements' $(\Gamma, \delta^2,  \cW, \cV)$ describing all the possible translated extensions with graph norm at most $\delta$ of a given graph $\Gamma$, by specifying a collection of \emph{weeds} $\cW$ and a collection of \emph{vines} $\cV$. The assertion of a classification statement is that every translated extension of $\Gamma$ with graph norm at most $\delta$ which could be the principal graph of a subfactor is either a translated extension of a weed or a translation of a vine. 

We improve classification statements by taking a weed, moving it into the set of vines, and then adding all the depth 1 extensions of that weed to the set of weeds. The consequence of Theorem \ref{thm:PrincipalGraphStability} is that if a weed is  a cylinder we only need to consider stable extensions.

\begin{defn}
Suppose $\cP_\bullet$ is a subfactor planar algebra with principal graphs $(\Gamma_+,\Gamma_-)$.
A \underline{tail} of $\Gamma_+$ is a chain of vertices with degree at most $2$ connected to some vertex $v$ of $\Gamma$.
\end{defn}

The next proposition is probably folklore.

\begin{prop}\label{prop:ShortTails}
Suppose $\cP_\bullet$ is a finite depth subfactor planar algebra with principal graph $\Gamma_+$. Then no tail of $\Gamma_+$ may be longer than the initial arm.
\end{prop}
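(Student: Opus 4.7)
The plan is to combine the Chebyshev recursion for the Perron--Frobenius (PF) eigenvector $\phi$ of $\Gamma_+$ along the initial arm and along the proposed tail, together with the universal constraint $\phi(w)\geq 1$ at every vertex $w$ (since $\phi(w)$ equals the FP-dimension of the corresponding irreducible bimodule, and every such bimodule has dimension at least $1$).

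Normalize $\phi$ so that $\phi$ at the apex equals $1$. Since $\Gamma_+(s)=A_{s+1}$, where $s$ is the supertransitivity, the recursion $\delta\phi_k=\phi_{k-1}+\phi_{k+1}$ gives $\phi=[k+1]$ at depth $k$ along the initial arm; in particular the depth-$s$ vertex, which is necessarily the first branch vertex of $\Gamma_+$, has $\phi$-weight $[s+1]$.

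For any tail $v_1,\dots,v_\ell$ attached at a vertex $v$, with $v_\ell$ univalent, set $v =: v_0$ and note the Chebyshev recursion $\delta\phi(v_i)=\phi(v_{i-1})+\phi(v_{i+1})$ holds along the tail (since each interior $v_i$ has degree $2$ in $\Gamma_+$), together with the boundary condition $\delta\phi(v_\ell)=\phi(v_{\ell-1})$ at the univalent endpoint. Solving this two-term recursion in closed form gives
\[
\phi(v_i)=\frac{[\ell-i+1]}{[\ell+1]}\,\phi(v), \qquad\text{so in particular}\qquad \phi(v_\ell)=\phi(v)/[\ell+1].
\]
The constraint $\phi(v_\ell)\geq 1$ then forces $\phi(v)\geq [\ell+1]$.

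When $v$ is the first branch vertex at depth $s$, this immediately yields $[\ell+1]\leq[s+1]$ and hence $\ell\leq s$, using monotonicity of $[\,\cdot\,]$ in the relevant range of $q$. For a tail attached at a deeper branch vertex $v$ of depth $m>s$, the sub-chain of $\Gamma_+$ connecting the first branch to $v$ is itself a chain of degree-$\leq 2$ vertices (a tail at the first branch), and running the same Chebyshev analysis along this connecting chain while again invoking $\phi\geq 1$ bounds $\phi(v)\leq[s+1]$, whence $\ell\leq s$ again follows. The main technical obstacle is precisely this reduction for deeper branch vertices, which is handled by iterating the PF recursion along the connecting chain.
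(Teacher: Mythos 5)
Your analysis of a tail attached at the \emph{first} branch vertex is correct, and it is a genuinely different, Perron--Frobenius style argument from the paper's: the weight at the free end of a pendant chain of length $\ell$ hanging off $v$ is $\phi(v)/[\ell+1]$, and combining $\phi(v)=[s+1]$ with the universal bound $\phi\geq 1$ controls $\ell$. The genuine gap is the reduction for tails attached at deeper vertices --- which is exactly the case that matters in this paper, since the tails in $\cS$, $\cS'$ and throughout the $\cD$ and $\cK$ analyses are attached at depth $3$ or beyond. You assert that the subgraph connecting the first branch vertex to the attachment vertex $v$ is ``itself a chain of degree-$\leq 2$ vertices.'' It is not: between the first branch and $v$ the principal graph can branch arbitrarily. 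Already in $\cS$ the tail vertex at depth $4$ is attached to $[R]$ at depth $3$, and every path from the depth-$1$ branch vertex to $[R]$ passes through a depth-$2$ vertex of degree $3$; there is no connecting chain along which to run the Chebyshev recursion. Moreover the inequality you actually need, $\phi(v)\leq[s+1]$ at attachment vertices, cannot follow from $\phi\geq 1$ together with any local recursion: an upper bound on the weight of a deep vertex requires global information about the graph, and the statement is simply false for general vertices of principal graphs (already for Fuss--Catalan-type graphs the weights of intermediate vertices exceed $[s+1]$). You flag this reduction as ``the main technical obstacle,'' and indeed it is where the proof breaks.

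For contrast, the paper's proof works with fusion rather than dimensions: letting $P$ be the object at the \emph{end} of the tail, an induction shows that $P\otimes\jw{k}$ is irreducible and represents the vertex $k$ steps back along the tail, for $k$ up to the tail length; since $\jw{n}$ is not simple when the supertransitivity is $n-1$, the object $P\otimes\jw{n}$ cannot be irreducible, which bounds the tail. That argument is local to the tail and completely insensitive to where and how the tail attaches to the rest of the graph --- precisely the information your approach needs and cannot obtain. (A secondary remark: where your argument does apply, it yields $\ell\leq s$, one better than the paper's $m\leq n=s+1$; proving a stronger bound is fine if correct, but it is a sign that the two arguments are counting the ``initial arm'' differently, and you should reconcile your conventions with the statement before relying on the sharper constant.)
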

\begin{proof}
Suppose $\cP_\bullet$ is $n-1$-supertransitive and $\Gamma_+$ has a tail of length $m$.
Let $P\in\cP_{d,+}$ be a representative of the vertex $[P]$ at the end of the tail, where $[P]$ has depth $d$.
An induction argument shows that for each $k=1,\dots, m$, $P\otimes \jw{k}$ is a representative of the vertex at distance $m-k$ on the tail.
$$
\begin{tikzpicture}
	\draw (0,0) -- (1,0);
	\draw (2,0) -- (3,0);
	\node at (1.5,0) {$\cdots$};
	\filldraw (0,0) circle (.05cm);
	\node at (0,.3) {$\star$};
	\node at (1,-.4) {$\underbrace{\hspace{2cm}}_{n\text{ vertices}}$};
	\filldraw (1,0) circle (.05cm);
	\filldraw (2,0) circle (.05cm);
	\node at (2.2,.3) {$\jw{n-1}$};
	\draw[rounded corners=5pt, very thick]  (3,-.5) rectangle (4,.5);
	\filldraw (5,0) circle (.05cm);
	\node at (5.2,.3) {\scriptsize{$[P\otimes \jw{m-1}]$}};
	\filldraw (6,0) circle (.05cm);
	\filldraw (7,0) circle (.05cm) node [above] {$[P]$};
	\node at (6,-.4) {$\underbrace{\hspace{2cm}}_{m\text{ vertices}}$};
	\draw (4,0) -- (5,0);
	\draw (6,0) -- (7,0);
	\node at (5.5,0) {$\cdots$};
\end{tikzpicture}
$$
Since $\jw{n}$ is not simple, $P\otimes \jw{n}$ is not simple, so $m\leq n$.
\end{proof}

\begin{cor}\label{cor:1STCylinder}
Suppose the cylinder $(\Gamma_+,\Gamma_-)$ has fixed supertransitivity $n-1$.
Then any tail of $\Gamma_\pm$ has length at most $n$.
In particular, a cylinder with fixed supertransitivity 1 cannot be extended.
\end{cor}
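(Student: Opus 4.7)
My plan is to combine Proposition \ref{prop:ShortTails} with the Principal Graph Stability Theorem \ref{thm:PrincipalGraphStability} to conclude both assertions, with Theorem \ref{thm:PrincipalGraphStability} doing the work of upgrading ``stable at some depth'' to ``finite depth'', so that Proposition \ref{prop:ShortTails} is actually applicable.

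For the first claim, a cylinder is by definition stable at some depth $k$. Combined with the hypothesis of fixed supertransitivity $n-1$ (so $\Gamma_\pm(n) \neq A_{n+1}$) and the standing assumption $\delta > 2$ in the index range of interest, Theorem \ref{thm:PrincipalGraphStability}(1) gives that $(\Gamma_+,\Gamma_-)$ is stable at every depth $\geq k$ and that both $\Gamma_+$ and $\Gamma_-$ are finite. Hence the associated planar algebra $\cP_\bullet$ is finite depth, so Proposition \ref{prop:ShortTails} applies to each of $\Gamma_+$ and $\Gamma_-$. Since the initial arm of a $(n-1)$-supertransitive graph has $n$ vertices, every tail has length at most $n$, which is the content of the first sentence.

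For the final sentence, I would specialise to supertransitivity $1$, so $n=2$ and tails have length at most $2$. Any proposed extension of the cylinder must add vertices only at depths strictly greater than the stable depth. Stability, which is preserved along the extension by Theorem \ref{thm:PrincipalGraphStability}, forces these new vertices to appear as simple continuations of the existing tails --- no new branching, no multi-edges, no collisions. So extending the cylinder can only lengthen its tails. But a cylinder with supertransitivity $1$ is already drawn out in its stable form with tails saturating the maximal length $2$; any further lengthening would produce a tail of length $\geq 3$, directly contradicting Proposition \ref{prop:ShortTails} applied to the extension. Hence no proper extension exists.

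The main obstacle, to the extent there is one, is the bookkeeping step of verifying that a ``cylinder with fixed supertransitivity $1$'' in the classification framework really does come with tails already at length $2$, rather than some shorter length. This is not a genuine mathematical difficulty, but a clarification of the paper's conventions for how stable weeds are recorded; one should check that whenever Corollary \ref{cor:1STCylinder} is invoked, the relevant weed has been fully extended through its stable region before the bound of Proposition \ref{prop:ShortTails} is applied.
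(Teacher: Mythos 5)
Your first paragraph is right, and it is exactly the argument the paper leaves implicit (no proof is given for this corollary): stability plus Theorem \ref{thm:PrincipalGraphStability}(1) forces $\Gamma_\pm$ to be finite, so Proposition \ref{prop:ShortTails} applies and bounds every tail by the initial arm.

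The second paragraph has a genuine gap, and it is not the bookkeeping issue you flag at the end. Taking the first sentence at face value --- tails of at most $n=2$ vertices --- the final sentence does not follow: a $1$-supertransitive cylinder may have its deepest vertex attached to a vertex of degree at least $3$, so that its tails have only one vertex (the cylinder $\left(\smallbigraph{bwd1v2v1duals1v1},\smallbigraph{bwd1v2v1duals1v1}\right)$ of Figure \ref{fig:TikzTree5Pruned} is such an example), and a one-step extension then has a tail with exactly two vertices, which ``at most $2$'' permits. So your premise that these cylinders arrive ``with tails saturating the maximal length $2$'' is false, and no convention of the classification framework repairs it. What closes the argument is that the proof of Proposition \ref{prop:ShortTails} actually yields a strict bound: if $P$ represents the end of a tail with $m$ vertices, then $P\otimes\jw{k}$ is a single simple vertex of the tail for $k\leq m-1$, and $P\otimes\jw{m}$ is still the single simple vertex to which the tail attaches; since $\jw{n}$ decomposes into at least two simple summands counted with multiplicity, so does $P\otimes\jw{n}$, whence $m\leq n-1$. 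For fixed supertransitivity $1$ every tail therefore has at most one vertex, while stability (propagated to any extension by Theorem \ref{thm:PrincipalGraphStability}) forces the new deepest vertex of an extension and its unique neighbour at the previous depth to both have degree at most two, i.e.\ to form a tail with at least two vertices; hence no extension exists. This sharper count is also the one the paper uses in practice when it says ``the tails of the cylinders can have length at most 1, and hence cannot be extended.''
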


In the language of classification statements of \cite{MR2914056}, because we are only interested in 1-supertransitive subfactors, we can move any cylindrical weeds to the vines.

\section{Proof of the main result}
\label{sec:main-result}

\begin{thm}\label{thm:MainHelper}
Suppose $N\subset M$ is a 1-supertransitive subfactor with no intermediate subfactor and index in the range $[3+\sqrt{5},6 \frac{1}{5}]$.
Then the principal graphs $(\Gamma_+,\Gamma_-)$ are extensions of one of the following 4 weeds:
\begin{align*}
\cD & = \left(\bigraph{bwd1v1p1v1x0p1x1p0x1duals1v1x2},\bigraph{bwd1v1p1v1x0p1x1p0x1duals1v1x2} \right) &
\cK & = \left(\bigraph{bwd1v1p1v1x1p0x1p0x1duals1v1x2},\bigraph{bwd1v1p1v1x1p0x1p0x1duals1v1x2}\right)\\
\cD' & = \left(\bigraph{bwd1v1p1v1x0p1x1p0x1duals1v2x1}, \bigraph{bwd1v1p1v1x0p1x1p0x1duals1v2x1} \right)&
\cK' & = \left(\bigraph{bwd1v1p1v1x1p0x1p0x1duals1v1x2},\bigraph{bwd1v1p1v1x1p1x0p0x1duals1v1x2}\right).
\end{align*}
\end{thm}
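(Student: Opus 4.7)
The strategy is to extend the classification statement of \cite{1205.2742} from index $3+\sqrt5$ up through $6\tfrac15$, enumerating all possible depth-$2$ local pictures for $(\Gamma_+,\Gamma_-)$ allowed by the Frobenius--Perron constraints and then eliminating or identifying each using the tools assembled earlier in the section. Concretely, by $1$-supertransitivity the unique depth-$1$ vertex has Frobenius--Perron dimension $\delta \in [\sqrt{3+\sqrt 5},\sqrt{6\tfrac15}]$, and the depth-$2$ vertices $[P_i]$ of $\Gamma_+$ must satisfy $\sum_i m_i d_i = \delta^2-1$ with $m_i \ge 1$ and $1 \le d_i \le \delta$. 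In this range every $m_i$ is forced to equal $1$ (any doubled edge from the depth-$1$ vertex would push the graph norm past $\sqrt{6\tfrac15}$), so the candidate multisets $(d_1,\dots,d_k)$ reduce to a short finite list. Enumerating the admissible dualities (i.e.\ the possible red arcs pairing vertices at each even depth) between the two graphs produces a short finite list of candidate pairs $(\Gamma_+(2),\Gamma_-(2))$.

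The case analysis then proceeds configuration by configuration, with each case handled by exactly one of the following: (i) if a central depth-$2$ vertex $[P]$ has $\dim([P])>1$ and $T(P,Q)=1$ for every other depth-$2$ projection $Q$ (which one verifies using the thickness bound of Lemma \ref{lem:ThicknessBound}), then Liu's weakened virtual-normalizer theorem produces a biprojection and the case is killed by the no-intermediate hypothesis; (ii) if the depth-$2$ picture matches the diamond of Corollary \ref{cor:Diamond}, the corollary pins the subfactor down to $R\rtimes\bbZ/2\subset R\rtimes D_5$, whose index lies below our range; (iii) if the depth-$2$ picture matches the house of Corollary \ref{cor:BMW}, then since $6\tfrac15<7.4641$ the corollary identifies the subfactor as the BMW extension of index $3+2\sqrt 2$ with principal graphs $\cS$, which is itself an extension of $\cD$; (iv) if the depth-$2$ picture carries two univalent vertices on $\Gamma_-$ of the Bisch--Haagerup type, Proposition \ref{prop:BH} applies and again forces an intermediate subfactor. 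What remains after these eliminations is precisely the depth-$2$ truncations of $\cD,\cD',\cK,\cK'$ (with $\cS,\cS'$ arising as the depth-$4$ extensions of $\cD,\cD'$ handled in (iii)). Corollary \ref{cor:1STCylinder} guarantees that no cylindrical possibility needs to be carried along as a separate weed.

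I expect the main obstacle to be the duality bookkeeping. Several depth-$2$ configurations on $\Gamma_+$ are indistinguishable as unoriented graphs but separate according to how the red arcs pair even-depth vertices between $\Gamma_+$ and $\Gamma_-$, and this is exactly what distinguishes $\cD$ from $\cD'$ (and $\cK$ from $\cK'$). The second delicate point is to verify that the centrality-plus-thickness hypothesis of the weakened Liu theorem really does hold on each candidate to which we apply it; a missed case here would invalidate the biprojection argument and leave an unwanted extra weed in the final statement.
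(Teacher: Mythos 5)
Your toolkit is the right one --- the thickness/biprojection elimination, Corollaries \ref{cor:Diamond} and \ref{cor:BMW}, and Proposition \ref{prop:BH} are exactly the instruments the paper uses --- but the proposal has a structural gap: the four weeds $\cD,\cD',\cK,\cK'$ all have the \emph{same} depth-$2$ truncation (one simple edge to depth $1$ and two simply-laced depth-$2$ vertices), so no analysis of ``depth-$2$ local pictures'' can possibly terminate in these four graphs. The combinatorial heart of the argument lives at depth $3$ and beyond: one must enumerate the depth-$3$ extensions (the paper's odometer produces $300$ of them), kill the vast majority ($292$) by verifying the centrality-plus-thickness hypotheses of Liu's weakened virtual normalizer theorem, and then continue the surviving non-$\cD,\cD',\cK,\cK'$ weeds out to depth $5$ until each either stabilizes (becoming a cylinder, which by Corollary \ref{cor:1STCylinder} cannot be extended) or dies. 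Your proposal compresses all of this into a depth-$2$ case split and so never actually produces the statement being proved. Relatedly, your case (iii) is wrong as stated: the graph of Corollary \ref{cor:BMW} has only two depth-$3$ vertices and the corollary terminates that branch with the depth-$3$ BMW subfactor; it is not an extension of $\cD$ and does not yield $\cS$, which has three depth-$3$ vertices and depth $4$.

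Two further claims need repair. First, ``every $m_i$ is forced to equal $1$'' is false in this index range: a depth-$2$ vertex attached by a double edge has dimension at least $2$, contributing $4$ to $\delta^2-1\le 5.2$, which is perfectly admissible --- the $S_3$- and $\Integer/6$-subfactors at index $6$ realize such graphs. The paper does not exclude them by norm bounds; they survive the enumeration as \emph{vines} and are eliminated only by invoking the classification of depth-$2$ subfactors via six-dimensional Kac algebras (and, for one remaining graph, Popa's obstruction). Second, your proposal omits the vine analysis entirely: after the odometer finishes, some $32$ vines and cylinders remain, and one must check that each either has a vertex of dimension less than $1$, a vertex whose dimension is not an algebraic integer, or is one of a short list of explicit graphs ruled out individually. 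Without that step the theorem's conclusion --- that the principal graphs are extensions of one of exactly these four weeds --- does not follow.
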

\begin{proof}

We run the principal graph odometer of \cite{MR2914056} on the weed $(A_2,A_2)=\left( \smallbigraph{bwd1duals1},\smallbigraph{bwd1duals1}\right)$, noting that we may ignore the following weeds:
\begin{itemize}
\item
$(A_3,A_3)=\left( \smallbigraph{bwd1v1duals1v1},\smallbigraph{bwd1v1duals1v1}\right)$,
since we are only concerned with 1-supertransitive subfactors,
\item
$\left(\bigraph{bwd1v1p1v1x1duals1v2x1},\bigraph{bwd1v1p1v1x1duals1v2x1}\right)$, since it has no extensions by Theorem \ref{thm:Diamond},
\item
$\left(\bigraph{bwd1v1p1v1x1duals1v1x2},\bigraph{bwd1v1p1v1x1duals1v1x2}\right)$,
since its extensions are classified by Corollary \ref{cor:Diamond},
\item
$\left(\bigraph{bwd1v1p1v1x0p1x1duals1v2x1},\bigraph{bwd1v1p1v1x0p1x1duals1v2x1}\right)$, since the bottom vertex at depth 3 must have dimension 0, which is impossible,
\item
$\left(\bigraph{bwd1v1p1v1x0p1x1duals1v1x2},\bigraph{bwd1v1p1v1x0p1x1duals1v1x2}\right)$,
since its extensions with index at most 7.4641 are classified by Corollary \ref{cor:BMW},
\item
$\left(\bigraph{bwd1v1p1p1v0x1x0p0x0x1duals1v1x3x2},\bigraph{bwd1v1p1p1v0x0x1p0x0x1duals1v2x1x3}\right)$,
since any extension must be a Bisch-Haagerup subfactor of index 6 with an intermediate subfactor by Proposition \ref{prop:BH},
\item
$\left(\bigraph{bwd1v1p1p1v0x1x0p0x0x1duals1v1x2x3},\bigraph{bwd1v1p1p1v0x0x1p0x0x1duals1v2x1x3}\right)$,
since analyzing the rotation by $\pi$ implies $\Gamma_+$ and $\Gamma_-$ must have the same number of self-dual vertices one past the branch by \cite[Lemma 3.6]{MR2914056}.
\end{itemize}

We get 300 weeds at depth 3, of which 292 have intermediates by the results of Section \ref{sec:Thickness}.
Of the remaining 8, two are stable in the sense of \cite{MR1334479,1208.1564}, leaving only 6 weeds that will continue to grow as we keep running the odometer, along with 19 vines and 2 cylinders.
Four of these weeds are $\cD,\cD',\cK,\cK'$, which we ignore as we continue to run the odometer.
We then run the odometer to depth 5, leaving no remaining weeds which are not $\cD,\cD',\cK,\cK'$, and a total of 32 vines and cylinders.

We present the output of the odometer in Figure \ref{fig:TikzTree5Pruned}, omitting weeds with intermediates.
The {\tt Mathematica} notebook performing this computation can be found as {\tt
1STBelow6Enumerator.nb} with the {\tt arXiv} sources of this article. (It
further relies on the {\tt FusionAtlas`} package; see \cite{MR2914056} for more
details.) This notebook also contains sections for the later parts of the paper
which enumerate principal graphs.

We highlight cylinders with a blue background and `active' weeds with a red background.
Weeds with a white background which are leaves on the tree do not have descendants at the next depth.
See the description in \cite{MR2914056} and \cite{1205.2742} for more details.

\afterpage{
\begin{figure}[!t]
\resizebox{!}{0.95\textheight}{
\begin{tikzpicture}
\tikzset{grow=right,level distance=130pt}
\tikzset{every tree node/.style={draw,fill=white,rectangle,rounded corners,inner sep=2pt}}
\Tree
[.\node{$\!\!\begin{array}{c}\bigraph{bwd1duals1}\\\bigraph{bwd1duals1}\end{array}\!\!$};
	[.\node{$\!\!\begin{array}{c}\smallbigraph{bwd1v2duals1v1}\\\smallbigraph{bwd1v2duals1v1}\end{array}\!\!$};
		[.\node[fill=blue!30]{$\!\!\begin{array}{c}\smallbigraph{bwd1v2v1duals1v1}\\\smallbigraph{bwd1v2v1duals1v1}\end{array}\!\!$};]]
	[.\node{$\!\!\begin{array}{c}\smallbigraph{bwd1v2duals1v1}\\\bigraph{bwd1v1p1p1p1duals1v2x1x4x3}\end{array}\!\!$};]
	[.\node{$\!\!\begin{array}{c}\smallbigraph{bwd1v2duals1v1}\\\bigraph{bwd1v1p1p1p1duals1v2x1x3x4}\end{array}\!\!$};]
	[.\node{$\!\!\begin{array}{c}\smallbigraph{bwd1v2duals1v1}\\\bigraph{bwd1v1p1p1p1duals1v1x2x3x4}\end{array}\!\!$};]
	[.\node{$\!\!\begin{array}{c}\bigraph{bwd1v1p1duals1v2x1}\\\bigraph{bwd1v1p1duals1v2x1}\end{array}\!\!$};
		[.\node[fill=red!30]{$\!\!\begin{array}{c}\bigraph{bwd1v1p1v1x0p1x1p0x1duals1v2x1}\\\bigraph{bwd1v1p1v1x0p1x1p0x1duals1v2x1}\end{array}\!\!$};]]
	[.\node{$\!\!\begin{array}{c}\bigraph{bwd1v2p1duals1v1x2}\\\bigraph{bwd1v2p1duals1v1x2}\end{array}\!\!$};
		[.\node[fill=blue!30]{$\!\!\begin{array}{c}\bigraph{bwd1v2p1v0x1duals1v1x2}\\\bigraph{bwd1v2p1v0x1duals1v1x2}\end{array}\!\!$};]]
	[.\node{$\!\!\begin{array}{c}\bigraph{bwd1v2p1duals1v1x2}\\\bigraph{bwd1v1p1p1p1p1duals1v2x1x4x3x5}\end{array}\!\!$};]
	[.\node{$\!\!\begin{array}{c}\bigraph{bwd1v2p1duals1v1x2}\\\bigraph{bwd1v1p1p1p1p1duals1v2x1x3x4x5}\end{array}\!\!$};]
	[.\node{$\!\!\begin{array}{c}\bigraph{bwd1v2p1duals1v1x2}\\\bigraph{bwd1v1p1p1p1p1duals1v1x2x3x4x5}\end{array}\!\!$};]
	[.\node{$\!\!\begin{array}{c}\bigraph{bwd1v1p1duals1v1x2}\\\bigraph{bwd1v1p1duals1v1x2}\end{array}\!\!$};
		[.\node[fill=red!30]{$\!\!\begin{array}{c}\bigraph{bwd1v1p1v1x1p1x0p0x1duals1v1x2}\\\bigraph{bwd1v1p1v1x1p0x1p1x0duals1v1x2}\end{array}\!\!$};]
		[.\node[fill=red!30]{$\!\!\begin{array}{c}\bigraph{bwd1v1p1v1x0p1x1p1x0duals1v1x2}\\\bigraph{bwd1v1p1v1x0p1x1p0x1duals1v1x2}\end{array}\!\!$};]
		[.\node[fill=red!30]{$\!\!\begin{array}{c}\bigraph{bwd1v1p1v1x1p0x1p0x1duals1v1x2}\\\bigraph{bwd1v1p1v1x1p1x0p1x0duals1v1x2}\end{array}\!\!$};]
		[.\node{$\!\!\begin{array}{c}\bigraph{bwd1v1p1v1x0p1x0p1x1p0x1duals1v1x2}\\\bigraph{bwd1v1p1v1x0p1x0p1x1p0x1duals1v1x2}\end{array}\!\!$};
			[.\node[fill=blue!30]{$\!\!\begin{array}{c}\bigraph{bwd1v1p1v1x0p1x0p1x1p0x1v0x0x0x1duals1v1x2v1}\\\bigraph{bwd1v1p1v1x0p1x0p1x1p0x1v0x0x0x1duals1v1x2v1}\end{array}\!\!$};]
			[.\node[fill=blue!30]{$\!\!\begin{array}{c}\bigraph{bwd1v1p1v1x0p1x0p1x1p0x1v1x0x0x0duals1v1x2v1}\\\bigraph{bwd1v1p1v1x0p1x0p1x1p0x1v1x0x0x0duals1v1x2v1}\end{array}\!\!$};]
			[.\node[fill=blue!30]{$\!\!\begin{array}{c}\bigraph{bwd1v1p1v1x0p1x0p1x1p0x1v1x0x0x0p0x1x0x0duals1v1x2v2x1}\\\bigraph{bwd1v1p1v1x0p1x0p1x1p0x1v1x0x0x0p0x1x0x0duals1v1x2v2x1}\end{array}\!\!$};]
			[.\node{$\!\!\begin{array}{c}\bigraph{bwd1v1p1v1x0p1x0p1x1p0x1v0x0x0x1p0x0x0x1duals1v1x2v2x1}\\\bigraph{bwd1v1p1v1x0p1x0p1x1p0x1v0x0x0x1p0x0x0x1duals1v1x2v2x1}\end{array}\!\!$};]
			[.\node{$\!\!\begin{array}{c}\bigraph{bwd1v1p1v1x0p1x0p1x1p0x1v0x0x0x1p0x0x0x1duals1v1x2v1x2}\\\bigraph{bwd1v1p1v1x0p1x0p1x1p0x1v0x0x0x1p0x0x0x1duals1v1x2v2x1}\end{array}\!\!$};]
			[.\node[fill=blue!30]{$\!\!\begin{array}{c}\bigraph{bwd1v1p1v1x0p1x0p1x1p0x1v1x0x0x0p0x1x0x0duals1v1x2v1x2}\\\bigraph{bwd1v1p1v1x0p1x0p1x1p0x1v1x0x0x0p0x1x0x0duals1v1x2v1x2}\end{array}\!\!$};]
			[.\node[fill=blue!30]{$\!\!\begin{array}{c}\bigraph{bwd1v1p1v1x0p1x0p1x1p0x1v0x1x0x0p0x0x0x1duals1v1x2v1x2}\\\bigraph{bwd1v1p1v1x0p1x0p1x1p0x1v0x1x0x0p0x0x0x1duals1v1x2v1x2}\end{array}\!\!$};]
			[.\node{$\!\!\begin{array}{c}\bigraph{bwd1v1p1v1x0p1x0p1x1p0x1v0x0x0x1p0x0x0x1duals1v1x2v1x2}\\\bigraph{bwd1v1p1v1x0p1x0p1x1p0x1v0x0x0x1p0x0x0x1duals1v1x2v1x2}\end{array}\!\!$};]]
		[.\node{$\!\!\begin{array}{c}\bigraph{bwd1v1p1v1x0p1x1p1x0p0x1duals1v1x2}\\\bigraph{bwd1v1p1v1x0p1x1p0x1p0x1duals1v1x2}\end{array}\!\!$};
			[.\node[fill=blue!30]{$\!\!\begin{array}{c}\bigraph{bwd1v1p1v1x0p1x1p0x1p0x1v1x0x0x0p0x0x1x0duals1v1x2v2x1}\\\bigraph{bwd1v1p1v1x0p1x1p1x0p0x1v0x0x1x0p0x0x0x1duals1v1x2v2x1}\end{array}\!\!$};]]]
	[.\node{$\!\!\begin{array}{c}\bigraph{bwd1v1p1p1duals1v2x1x3}\\\bigraph{bwd1v1p1p1duals1v2x1x3}\end{array}\!\!$};]
	[.\node{$\!\!\begin{array}{c}\bigraph{bwd1v1p1p1duals1v1x2x3}\\\bigraph{bwd1v1p1p1duals1v1x2x3}\end{array}\!\!$};]
	[.\node{$\!\!\begin{array}{c}\bigraph{bwd1v1p1p1p1duals1v2x1x4x3}\\\bigraph{bwd1v1p1p1p1duals1v2x1x4x3}\end{array}\!\!$};]
	[.\node{$\!\!\begin{array}{c}\bigraph{bwd1v1p1p1p1duals1v2x1x3x4}\\\bigraph{bwd1v1p1p1p1duals1v2x1x3x4}\end{array}\!\!$};]
	[.\node{$\!\!\begin{array}{c}\bigraph{bwd1v1p1p1p1duals1v1x2x3x4}\\\bigraph{bwd1v1p1p1p1duals1v1x2x3x4}\end{array}\!\!$};]
	[.\node{$\!\!\begin{array}{c}\bigraph{bwd1v1p1p1p1p1duals1v2x1x4x3x5}\\\bigraph{bwd1v1p1p1p1p1duals1v2x1x4x3x5}\end{array}\!\!$};]
	[.\node{$\!\!\begin{array}{c}\bigraph{bwd1v1p1p1p1p1duals1v2x1x3x4x5}\\\bigraph{bwd1v1p1p1p1p1duals1v2x1x3x4x5}\end{array}\!\!$};]
	[.\node{$\!\!\begin{array}{c}\bigraph{bwd1v1p1p1p1p1duals1v1x2x3x4x5}\\\bigraph{bwd1v1p1p1p1p1duals1v1x2x3x4x5}\end{array}\!\!$};]]
\end{tikzpicture}
}
\caption{Results of running the odometer on $(A_2,A_2)$ to depth 5, ignoring $\cD,\cD',\cK,\cK'$.}
\label{fig:TikzTree5Pruned}
\end{figure}
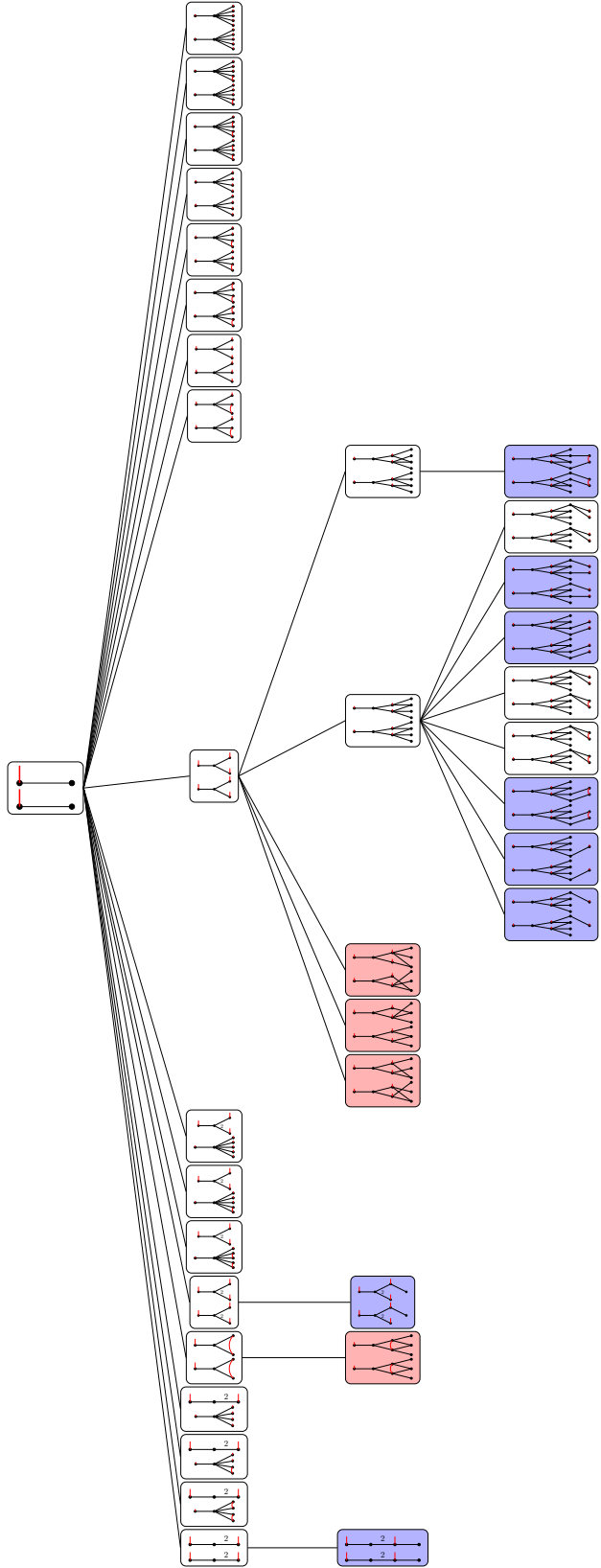
\clearpage
}

Since we are only interested in 1-supertransitive subfactors, the vines cannot be translated.
By Proposition \ref{prop:ShortTails} and Corollary \ref{cor:1STCylinder}, the tails of the cylinders can have length at most 1, and hence cannot be extended. Thus from this point we consider all the cylinders as vines.

We now examine the vines in the index range $[3+\sqrt{5},6\frac{1}{2}]$.
All the vines either have a vertex whose dimension is less than 1 or have a vertex whose dimension is not an algebraic integer (ruling them out), or they are on the following list of potential principal graphs.
\begin{enumerate}
\item $\left(\bigraph{bwd1v1p2duals1v1x2},\bigraph{bwd1v1p2duals1v1x2}\right)$
\item $\left(\bigraph{bwd1v1p2duals1v1x2},\bigraph{bwd1v1p1p1p1p1duals1v1x2x3x4x5}\right)$
\item $\left(\bigraph{bwd1v1p2duals1v1x2},\bigraph{bwd1v1p1p1p1p1duals1v1x2x3x5x4}\right)$
\item $\left(\bigraph{bwd1v1p2duals1v1x2},\bigraph{bwd1v1p1p1p1p1duals1v1x3x2x5x4}\right)$
\item $\left(\bigraph{bwd1v1p1p1p1p1duals1v1x2x3x4x5},\bigraph{bwd1v1p1p1p1p1duals1v1x2x3x4x5}\right)$
\item $\left(\bigraph{bwd1v1p1p1p1p1duals1v1x2x3x5x4},\bigraph{bwd1v1p1p1p1p1duals1v1x2x3x5x4}\right)$
\item $\left(\bigraph{bwd1v1p1p1p1p1duals1v1x3x2x5x4},\bigraph{bwd1v1p1p1p1p1duals1v1x3x2x5x4}\right)$
\item $\left(\smallbigraph{bwd1v2v1duals1v1},\smallbigraph{bwd1v2v1duals1v1}\right)$
\end{enumerate}
Note that (3) is the principal graph pair of the $S_3$-subfactor, and (7) is the principal graph pair of the $\Integer/6$-subfactor.
In fact, depth 2 subfactors are classified by outer actions of finite dimensional Kac algebras \cite{MR996454,MR1186139}.
We know all such algebras of dimension 6; they are either group algebras or duals of group algebras by \cite{MR1662308}.
Thus, (1), (2), (4), (5), and (6) are not principal graph pairs of subfactors.
By \cite[p. 991]{MR1145672}, (8) is not the principal graph pair of a subfactor.
\end{proof}

Using the above theorem, we now prove our main result.

\begin{proof}[Proof of the main Theorem \ref{thm:Main}]
In Section \ref{sec:Shuriken}, we show that there is exactly one subfactor planar algebra with principal graphs $\cS$, and exactly two subfactors with principal graphs $\cS'$ which are complex conjugate, and all three are symmetrically self-dual in the sense of \cite[Section 5.1]{1208.3637}.
The weeds $\cD,\cD'$ are eliminated in Section \ref{sec:Dart}.
The weeds $\cK,\cK'$ are eliminated in Section \ref{sec:Kangaroo}.
\end{proof}

\section{Classification of subfactors with principal graphs
\texorpdfstring{$\cS,\cS'$}{S, S'}}\label{sec:Shuriken}

In this section, we construct one subfactor with principal graphs $\cS$ and two subfactors with principal graphs $\cS'$ which are complex conjugate, where
\begin{align*}
\cS&=\left(\bigraph{bwd1v1p1v1x0p1x1p0x1v0x1x0duals1v1x2v1},\bigraph{bwd1v1p1v1x0p1x1p0x1v0x1x0duals1v1x2v1}\right) \\
\cS'&=\left(\bigraph{bwd1v1p1v1x0p1x1p0x1v0x1x0duals1v2x1v1},\bigraph{bwd1v1p1v1x0p1x1p0x1v0x1x0duals1v2x1v1}\right).
\end{align*}
We then show these are the unique such subfactors with these principal graphs.

To do so, we work in the common underlying bipartite graph planar algebra of $\cS,\cS'$ (without dual data).
By the symmetry of the graph, the vertices at depth 2 have the same dimension.
Thus, up to a scalar, the new low-weight rotational eigenvector at depth 2 corresponding to eigenvalue $\omega=\pm 1$ must be the difference of the two minimal projections $P,Q$ at depth 2.
Note that
\begin{itemize}
\item
$P,Q$ are self-dual if and only if $\omega=1$, and
\item
$P,Q$ are dual to each other if and only if $\omega=-1$.
\end{itemize}

For $\omega=1$, we find a 1-parameter family of self-adjoint, uncappable elements which square to $\jw{2}$, while for $\omega=-1$ we find two such families.
We then show that related elements satisfy a relation analogous to the braid relation, and that these relations are independent of the parameter. (Thus each 1-parameter family actually gives a single planar algebra, and the parameter only determines the embedding of this planar algebra in the graph planar algebra.) Using this relation, we see that planar algebras generated by these elements are evaluable, and hence subfactor planar algebras.

\subsection{\texorpdfstring{$\sigma$}{sigma}-braided planar algebras}\label{sec:Braided}

By Theorem \ref{thm:Diamond} and Theorem \ref{thm:BMW}, subfactor planar algebras $\cP_\bullet$ generated by a single $2$-box such that $\dim(\cP_{3,\pm})\leq 14$ are completely classified.
One could hope to extend this classification to dimension 15, where it is possible to simplify at least one triangle formed by the 2-box \cite[p. 33]{math.QA/9909027}.

Note that if $\cP_\bullet$ is a subfactor planar algebra with principal graphs $\cS$ or $\cS'$, then there is a new low-weight rotational eigenvector at depth 2, and $\dim(\cP_{3,\pm})= 15$.
In the examples at hand, we can use this element to build a quadratic $\sigma$-braiding, defined below.

\begin{defn}
Let
$
\cF=
\begin{tikzpicture}[baseline=-.1cm]
	\draw (0,-.8)--(0,.8);
	\node at (0,1) {\scriptsize{$n-1$}};
	\node at (0,-1) {\scriptsize{$n-1$}};
	\draw (.2,.4) arc (180:0:.2cm) -- (.6,-1);
	\draw (-.2,-.4) arc (0:-180:.2cm) -- (-.6,1);
	\nbox{unshaded}{(0,0)}{.4}{0}{0}{}
\end{tikzpicture}: \cP_{n,\pm}\to \cP_{n,\mp}
$
be the one-click rotation.
\end{defn}

\begin{defn}
A 2-box $R$ is called a \emph{quadratic $\sigma$-braiding} if
\begin{enumerate}[(1)]
\item
$R$ is bi-invertible, i.e.,
$
\begin{tikzpicture}[baseline=.5cm]
	\coordinate (a) at (0,0);
	\coordinate (b) at (0,1);
	\draw ($(a)+(-.2,-.7)$) -- ($(b)+(-.2,.7)$);
	\draw ($(a)+(.2,-.7)$) -- ($(b)+(.2,.7)$);
	\ncircle{unshaded}{(a)}{.4}{180}{$R$}
	\ncircle{unshaded}{(b)}{.4}{180}{$R^{-1}$}
\end{tikzpicture}
=
\id_{\cP_{2,+}}
$
and
$
\begin{tikzpicture}[baseline=-.1cm]
	\coordinate (a) at (0,0);
	\coordinate (b) at (1.2,0);
	\draw ($(a)+(0,-.7)$) -- ($(a)+(0,.7)$);
	\draw ($(b)+(0,-.7)$) -- ($(b)+(0,.7)$);
	\draw ($(a)+(0,-.2)$) arc (180:0:.6cm);
	\draw ($(a)+(0,.2)$) arc (-180:0:.6cm);
	\ncircle{unshaded}{(a)}{.4}{180}{$R$}
	\ncircle{unshaded}{(b)}{.4}{0}{$R^{-1}$}
\end{tikzpicture}
=\,
\begin{tikzpicture}[baseline=.3cm]
	\draw (0,0) arc (180:0:.3cm);
	\draw (0,.8) arc (-180:0:.3cm);
\end{tikzpicture}\,,
$
\item
$
\begin{tikzpicture}[baseline=-.1cm]
	\coordinate (a) at (0,0);
	\draw ($(a)+(-.2,-.7)$) -- ($(a)+(-.2,.7)$);
	\draw ($(a)+(.2,.2)$) arc (180:0:.2cm) -- ($(a)+(.6,-.2)$) arc (0:-180:.2cm);
	\ncircle{unshaded}{(a)}{.4}{180}{$R$}
\end{tikzpicture}
=
a \,
\begin{tikzpicture}[baseline=-.1cm]
	\coordinate (a) at (0,0);
	\draw ($(a)+(0,-.7)$) -- ($(a)+(0,.7)$);
\end{tikzpicture}
$
and
$
\begin{tikzpicture}[baseline=-.1cm]
	\coordinate (a) at (0,0);
	\draw ($(a)+(-.2,-.7)$) -- ($(a)+(-.2,.7)$);
	\draw ($(a)+(.2,.2)$) arc (180:0:.2cm) -- ($(a)+(.6,-.2)$) arc (0:-180:.2cm);
	\ncircle{unshaded}{(a)}{.4}{90}{$R$}
\end{tikzpicture}
=
\sigma a^{-1} \,
\begin{tikzpicture}[baseline=-.1cm]
	\coordinate (a) at (0,0);
	\draw ($(a)+(0,-.7)$) -- ($(a)+(0,.7)$);
\end{tikzpicture}
$
for some $a\in\Complex\setminus\{0\}$,
\item
$R-\sigma^2 \cF^2(R)\in \TL_{2,+}$
(which implies $R^{-1}-\sigma^2 \cF^2(R^{-1})\in \TL_{2,+}$),
\item
($\sigma$-braid relation)
$R$ satisfies
$$
\sigma
\begin{tikzpicture}[baseline=.4cm]
	\coordinate (a) at (0,0);
	\coordinate (b) at (0,1);
	\coordinate (c) at (1,.5);
	\draw (b) -- (c) -- (a);
	\draw ($(a)+(0,-.7)$) -- ($(b)+(0,.7)$);
	\draw (a) -- ($(a)+(.4,-.7)$);
	\draw (b) -- ($(b)+(.4,.7)$);
	\draw ($(c)+(0,1.2)$) -- ($(c)+(0,-1.2)$);
	\ncircle{unshaded}{(a)}{.4}{180}{$R$}
	\ncircle{unshaded}{(b)}{.4}{180}{$R$}
	\ncircle{unshaded}{(c)}{.4}{115}{$R^{-1}$}
\end{tikzpicture}
=
\sigma^2
\begin{tikzpicture}[xscale=-1, baseline=.4cm]
	\coordinate (a) at (0,0);
	\coordinate (b) at (0,1);
	\coordinate (c) at (1,.5);
	\draw (b) -- (c) -- (a);
	\draw ($(a)+(0,-.7)$) -- ($(b)+(0,.7)$);
	\draw (a) -- ($(a)+(.4,-.7)$);
	\draw (b) -- ($(b)+(.4,.7)$);
	\draw ($(c)+(0,1.2)$) -- ($(c)+(0,-1.2)$);
	\ncircle{unshaded}{(a)}{.4}{55}{$R^{-1}$}
	\ncircle{unshaded}{(b)}{.4}{75}{$R^{-1}$}
	\ncircle{unshaded}{(c)}{.4}{0}{$R$}
\end{tikzpicture}
,$$ and
\item
(the quadratic condition) $R^2\in\TL_{2,+}\oplus \Complex R$.
\end{enumerate}
\end{defn}

Of course, an honest braiding satisfying the quadratic condition is a quadratic  (+1)-braiding. One of our examples has an honest braiding, but the other two have  quadratic $(\pm i)$-braidings.
Note that the condition $R\pm R^{-1} \in \TL_{2,+}$ implies $R$ is quadratic.
This holds for our examples.
Note that $R$ is a quadratic $(+1)$-braiding if and only if $iR$ is a quadratic $(-1)$-braiding.

\begin{remark}
Perhaps it would be interesting to work out the definition of a (not necessarily quadratic) $\sigma$-braiding; one would need all the variations of Reidemeister III, not just the $\sigma$-braid relation. It's unclear whether this would be motivated by the existence of examples, however!
Observe that an honest braiding would be a (not necessarily quadratic) $(+1)$-braiding (carefully note the positions of the $\star$ in the $\sigma$-braid relation to see this is equivalent to a Reidemeister III move).
Moreover, any honest braiding satisfying the HOMFLYPT skein relation \cite{MR776477} is a quadratic $(+1)$-braiding.
\end{remark}

\begin{thm}\label{thm:Evaluable}
Suppose the 2-box $R$ is a quadratic $\sigma$-braiding.
Then the planar algebra generated by $R$ is evaluable.
\end{thm}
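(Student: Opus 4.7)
The plan is to prove evaluability by induction on a complexity measure on closed diagrams built from copies of $R$ and $R^{-1}$, treating conditions (1)--(5) as a complete skein theory. One thinks of each $R$-box as a ``crossing,'' with $R^{-1}$ giving the opposite crossing, so that the argument parallels the evaluability proofs for HOMFLYPT \cite{MR776477} and BMW \cite{MR1090432}: a finite list of local rewrites reduces any closed diagram to a scalar multiple of the empty diagram, and bounds the dimension of each $\cP_{n,\pm}$.

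First I would set up induction on the pair $(N,c)$, where $N$ counts the $R^{\pm 1}$-boxes in a closed diagram $D$ and $c$ is an auxiliary measure, such as a braid-length count in a chosen planar isotopy representative of $D$. The base case $N=0$ is pure Temperley--Lieb and thus a power of $\delta$. The case $N=1$ falls immediately to condition (2): in a closed diagram with a single $R$-box, two of its boundary strands must be joined by a cap on one side or the other, so (2) collapses that $R$ to the identity strand with scalar $a$ or $\sigma a^{-1}$, reducing to the $N=0$ case.

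For the inductive step I would locate a simplifiable local region and apply the appropriate skein reduction. A bigon formed by adjacent $R$ and $R^{-1}$ boxes is eliminated by bi-invertibility (1). Two $R$-boxes sharing two strands in a planar-parallel position are simplified by the quadratic condition (5), which replaces $R^2$ by a linear combination of $R$ and $\TL_{2,+}$-elements and strictly lowers $N$. A configuration in which $R$ appears rotated by $\cF^2$ is traded, using relation (3), for $\sigma^{-2}R$ plus a Temperley--Lieb diagram. When no such local simplification is immediately available, I would invoke the $\sigma$-braid relation (4) as a Reidemeister-III analogue to transport $R$-boxes past one another until a simplifiable region appears, exactly in the spirit of the classical argument that tangles modulo a cubic skein relation have Temperley--Lieb-bounded dimension. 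Combined, these reductions express $D$ as a linear combination of closed diagrams with strictly fewer $R$-boxes (plus pure Temperley--Lieb terms), completing the induction.

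The main obstacle, as always with skein-theoretic evaluability arguments, will be termination: the rotation $\cF$ and the sign $\sigma$ mean that naive ``braid word length'' is not monotonically decreasing under the above moves, so the auxiliary measure $c$ has to be chosen carefully, tracking rotational positions of the $R$-boxes and the signs introduced by each $\sigma$-twist. I expect the cleanest route is to exhibit an explicit spanning set of ``reduced tangles'' in each $\cP_{n,\pm}$ --- analogous to a normal form for the braid group quotiented by a cubic skein --- then show that (1)--(5) suffice to bring any tangle into this form, and conclude that the only closed reduced tangles are scalar multiples of the empty diagram. The $\sigma=+1$ case is essentially the HOMFLYPT/BMW argument; for $\sigma=\pm i$, which covers the examples of interest, the additional work is careful sign bookkeeping but should require no genuinely new ideas.
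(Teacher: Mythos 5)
Your outline assembles the right local moves, but it stops exactly where the real work begins: you never produce a well-founded complexity measure, and you say so yourself (``the auxiliary measure $c$ has to be chosen carefully\dots I expect the cleanest route is to exhibit an explicit spanning set of reduced tangles''). Establishing termination \emph{is} the content of the theorem here; without a measure that provably decreases under your rewrites, the claim that the moves ``express $D$ as a linear combination of closed diagrams with strictly fewer $R$-boxes'' is unjustified, since the $\sigma$-braid relation by itself preserves the number of generators and can be applied forever.

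The paper supplies the missing idea in two ways. In its first proof, one reads each $R^{\pm 1}$ in a closed diagram $D$ as a positive/negative crossing to obtain a link $L(D)$; conditions (1)--(4) show that $D$ is unchanged modulo diagrams with fewer generators under any Reidemeister move, and --- this is a point your sketch underuses --- the quadratic condition also permits a \emph{crossing change} modulo lower-order terms (from $R^2 \in \TL_{2,+}\oplus\Complex R$ one gets $R - \alpha R^{-1}\in\TL$ after multiplying by $R^{-1}$), not merely the resolution of an adjacent doubled crossing. One then inducts lexicographically on $\left(u(L(D)),\,\#(L(D))\right)$, where $u$ is the unknotting number: if $u=0$ an isotopy strictly reduces the crossing count, and otherwise some isotopy followed by a crossing change strictly reduces $u$. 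The topology of $L(D)$ is the ``carefully chosen'' measure you were missing. The second proof instead isotopes $D$ into braid-closure form (Alexander's theorem) and reduces the number of generators meeting the rightmost strand using the quadratic and $\sigma$-braid relations, again yielding a genuinely decreasing quantity. Either device would complete your argument; as written, the proposal is a plausible strategy with the decisive step left open.
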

There are two nice proofs. One is an adaptation of the skein template algorithm for evaluating the HOMFLYPT polynomial, the other an adaption of an argument used in analyzing Hecke algebras.

\begin{proof}[Proof 1]
We associate to a closed diagram $D$ in $R$ and $R^{-1}$ a link $L(D)$ by replacing each $R$ with a positive crossing and each $R^{-1}$ with a negative crossing.
If $L(D_1)$ is isotopic to $L(D_2)$, then $D_1 = D_2 + \text{ lower order terms}$; to see this, just follow the sequence of Reidemeister moves witnessing the isotopy by the corresponding relations for a quadratic $\sigma$-braiding.
The quadratic hypothesis says that if $L(D_1)$ differs from $L(D_2)$ by a crossing change, again  $D_1 = D_2 + \text{ lower order terms}$. Now we show that all closed diagrams are evaluable by inducting on the pair $(u(L(D)), \#(L(D)))$ (ordered lexicographically), where $u$ is the unknotting number and $\#$ is the number of crossings (equivalently the number of $R$ or $R^{-1}$s in $D$).
If the unknotting number is zero, there is an isotopy that reduces the number of crossings.
Otherwise, by the definition of unknotting number, there is some isotopy followed by a crossing change which decreases the unknotting number.
A diagram with $\#(L(D)) = 0$ is in Temperley-Lieb, providing the base case of the induction.
\end{proof}

\begin{proof}[Proof 2]
This time, we induct on the number of generators appearing in a diagram.

By an isotopy, we can arrange a closed diagram in $R$ and $R^{-1}$ as the trace of a `rectangular' diagram without any cups or caps, and with each generator having two strings up and two strings down. (This is the same as the argument in Alexander's theorem \cite{alexanders-theorem} that every link is the trace closure of a braid.) Such a diagram consists of $n$
parallel strings, and some number of generators bridging adjacent pairs of strings, as in the following example:
$$
\begin{tikzpicture}[xscale=1.5,yscale=0.6,baseline=-.3cm]
	\coordinate (a) at (.8,2);
	\coordinate (b) at (.4,1);
	\coordinate (c) at (0,0);
	\coordinate (d) at (.4,-1);
	\coordinate (e) at (.8,-2);
	\draw ($(a)+(-1,.6)$)--($(e)+(-1,-.6)$);
	\draw ($(a)+(-.6,.6)$)--($(e)+(-.6,-.6)$);
	\draw ($(a)+(-.2,.6)$)--($(e)+(-.2,-.6)$);
	\draw ($(a)+(.2,.6)$)--($(e)+(.2,-.6)$);
	\nbox{unshaded}{(a)}{.4}{0}{0}{}
	\nbox{unshaded}{(b)}{.4}{0}{0}{}
	\nbox{unshaded}{(c)}{.4}{0}{0}{}
	\nbox{unshaded}{(d)}{.4}{0}{0}{}
	\nbox{unshaded}{(e)}{.4}{0}{0}{}
\end{tikzpicture}.
$$

Now, we argue that it is possible to use the relations to arrange that there is at most one generator connected to the rightmost string. If there are at least two, start bringing two adjacent generators on the rightmost string together. If there are no intervening generators on the rightmost-but-one string, we can bring them together and use the quadratic relation to replace the pair of generators with a single one (and lower order terms,
which are handled via our induction). If there is exactly one intervening
generator, we can use condition (5) for a quadratic $\sigma$-braiding (the
analogue of Reidemeister III) to reduce the number of generators connected to
the rightmost string. If there are at least two intervening generators, we
first apply argument of this paragraph again to those generators, replacing them with a single generator, and then apply
the previous sentence!
\end{proof}

Note that the evaluation of a diagram depends on $\sigma$, $a$, and also the implicit structure constants in relations (3) and (5) above.

\begin{remark}
By unpublished work of Bisch and Jones, and independently Wenzl,  all quadratic $(+1)$-braided subfactor planar algebras come from BMW algebras.
This also follows from Liu's classification work \cite{PASmallDimPartIV}.
\end{remark}

\subsection{Existence and Uniqueness of \texorpdfstring{$\cS,\cS'$}{S, S'}}\label{sec:ShurikenExistence}

Let $\Gamma$ be the underlying graph (without dual data) of the principal graph of $\cS,\cS'$.
Let $\cG_\bullet$ denote the bipartite graph planar algebra of $\Gamma$ \cite{MR1865703}.

The following two lemmas are the first ingredients of the existence and uniqueness of $\cS$ and $\cS'$.
We defer the technical proof to Appendix \ref{sec:Appendix}.

\begin{lem}\label{lem:OneParameterPlus}
Up to a sign, there is exactly a 1-parameter family of elements $A=\pm A(\lambda)\in \cG_{2,+}$ where $\lambda \in \mathbb T$ which satisfy
\begin{enumerate}[(1)]
\item
$A^*=A$ and $\cF^2(A)=A$,
\item
$A$ is uncappable, and
\item
$A^2=\jw{2}$ and $\cF(A)^2=\jw{2}$.
\end{enumerate}
\end{lem}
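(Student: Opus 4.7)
The plan is to work directly in the bipartite graph planar algebra $\cG_\bullet$, where every computation reduces to finite-dimensional linear and polynomial algebra. I would fix a basis of $\cG_{2,+}$ consisting of loops of length $4$ based at $\star$, write a general element $A$ as a linear combination of these, and then impose the three requirements in turn. The conditions $A^* = A$ and $\cF^2(A) = A$ are linear: they equate the coefficient of each loop with that of its reverse, respectively half-rotation, and cut the real dimension of the ambient space roughly in half. Uncappability adds two further linear constraints (one for each direction of cap), pinning $A$ to the complement of $\TL_{2,+}$ inside $\cG_{2,+}$, which for this $\Gamma$ is low-dimensional.

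Next I would turn to the quadratic equations $A^2 = \jw{2}$ and $\cF(A)^2 = \jw{2}$, which become a polynomial system in the remaining coefficients. Before attempting to solve, I would use the gauge action on $\cG_\bullet$ (rescaling the variable on each edge of $\Gamma$) to normalize as many coefficients as possible, since gauge-equivalent $A$'s generate the same abstract planar subalgebra inside $\cG_\bullet$ and so differ only by an inessential change of basis. The structure of $\Gamma$ has enough reflective symmetry that I expect the loops to group into orbits under the $\Integer/2$ reverse and the half-rotation, so the equations should decouple into a handful of small blocks.

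At that point I would solve the reduced blocks one at a time. The expectation is that all but one block uniquely pin down their coefficients (perhaps up to signs that get absorbed into the global $\pm A$), while one remaining block admits a residual $U(1)$ of solutions instead of a discrete set, and this $U(1)$ is the source of the parameter $\lambda \in \mathbb T$. The overall sign ambiguity is manifest, since $(-A)^2 = A^2$ and $\cF(-A)^2 = \cF(A)^2$, so quotienting by $\pm$ genuinely leaves a $\mathbb T$-family.

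The main obstacle is not conceptual but computational: one must explicitly enumerate the length-$4$ loops in $\Gamma$, evaluate their compositions and one-click rotations in a consistent normalization, and verify that the nonlinear system degenerates in the expected way rather than producing unexpected discrete branches or isolated accidents. This is precisely the kind of careful but routine bookkeeping that the authors have (sensibly) relegated to Appendix \ref{sec:Appendix}.
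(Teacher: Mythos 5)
Your proposal is a strategy, not a proof: the entire content of this lemma is the verification that the solution set of the (genuinely nonlinear) system is exactly $\{\pm A(\lambda)\}_{\lambda\in\mathbb T}$ and nothing else, and you defer precisely that verification (``the main obstacle is\dots computational'') to bookkeeping you do not carry out. In particular you never exhibit the mechanism that forces uniqueness of all but one degree of freedom, nor rule out ``unexpected discrete branches''; asserting that you \emph{expect} the blocks to pin themselves down is not an argument. The paper's proof is organized around a structural idea you do not have: it replaces the quadratic conditions $A^2=\jw{2}$, $\cF(A)^2=\jw{2}$ by the statement that $P=\tfrac{1}{2}(\jw{2}+A)$ and $Q=\tfrac{1}{2}(\jw{2}-A)$ (and likewise $\check P,\check Q$ for $\check A=\cF(A)$) are orthogonal projections summing to $\jw{2}$, and then observes that their images in the corner algebras $p_v\cG_{2,\pm}p_w$ --- a $3\times 3$ matrix algebra at $p_7\cG_{2,-}p_7$ and $2\times 2$ matrix algebras such as $p_6\cG_{2,+}p_4$ --- must be rank-one projections with prescribed diagonals. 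Rank-one-ness of a $2\times 2$ or $3\times 3$ projection with known diagonal determines its off-diagonal entries up to phase, and this is exactly where uniqueness and the single free phase $\lambda$ come from. Without this (or an equivalent explicit solution of the polynomial system), the lemma is not proved.

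There is also a conceptual wrinkle in your plan to ``normalize as many coefficients as possible'' by the gauge action before solving. The $\mathbb T$-family $A(\lambda)$ is itself (essentially) a gauge orbit --- the paper notes in the proof of Theorem \ref{thm:ExistUnique} that the family arises from the gauge action --- so quotienting by gauge first would collapse the very parameter the lemma asserts exists, and you would end up proving ``unique up to gauge and sign'' rather than the stated ``exactly a 1-parameter family up to sign.'' To recover the lemma you would then have to identify precisely which gauge transformations preserve conditions (1)--(3) together with your chosen normalization, and show that the resulting orbit of your normalized solution is exactly a circle; your hope for a ``residual $U(1)$'' surviving in one block is gesturing at this but is not reconciled with having already fixed the gauge. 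The direct enumerate-loops-and-solve computation you describe is viable (it is what the paper's accompanying \texttt{Mathematica} notebook does), but as a human-readable proof it needs either that machine verification or the projection/rank argument to be completed.
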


\begin{lem}\label{lem:OneParameterMinus}
Up to a sign, there are exactly two 1-parameter families of elements $B=\pm B(\lambda,\varepsilon)\in \cG_{2,+}$  where $\lambda \in \mathbb T$ and $\varepsilon \in \{-1,1\}$ which satisfy
\begin{enumerate}[(1)]
\item
$B^*=B$ and $\cF^2(B)=-B$,
\item
$B$ is uncappable, and
\item
$B^2=\jw{2}$ and $\check{B}^2=(-i\cF(B))^2=\jw{2}$.
\end{enumerate}
\end{lem}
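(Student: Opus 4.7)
The plan is to parallel the proof of Lemma \ref{lem:OneParameterPlus}, adapted to the rotational eigenvalue $-1$, and to track where the additional discrete parameter $\varepsilon$ necessarily enters. First I would expand a generic $B \in \cG_{2,+}$ as a sum over loops of length $4$ on $\Gamma$ based at the depth-$0$ and depth-$2$ vertices, with scalar coefficients. Because the two depth-$2$ vertices $P,Q$ have equal dimension and (for $\omega=-1$) are swapped by duality, the basic reality/rotation symmetry forces $B$ to act nontrivially between the two $P,Q$-blocks, not within them; this already tells us the structure of the allowed support.

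Next I would impose the \emph{linear} conditions: self-adjointness $B^*=B$, the eigenvalue equation $\cF^2(B)=-B$, and uncappability of $B$. Each of these is a manifestly linear constraint on the coefficients in the chosen basis of $\cG_{2,+}$, and together they cut the parameter space down to a very small number of complex unknowns (modulo the freedom to rescale the basis of the $P$- and $Q$-blocks by a common phase, which explains the $\lambda\in\mathbb T$ degree of freedom). The computation here is purely linear algebra over $\mathbb Q(q,q^{-1})$ and can be organized in the same way as in the proof of Lemma \ref{lem:OneParameterPlus}.

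Then I would impose the \emph{quadratic} conditions $B^2=\jw{2}$ and $\check B^2 = (-i\cF(B))^2 = \jw{2}$. Each gives a system of polynomial equations in the remaining coefficients, and the key point is that this system is essentially a single scalar quadratic after using the linear reductions from the previous step. In the $\omega=+1$ case of Lemma \ref{lem:OneParameterPlus} the quadratic forces a unique value (up to the global sign $\pm A$), whereas in the $\omega=-1$ case we expect to be extracting a square root, producing the two branches indexed by $\varepsilon\in\{-1,1\}$. Finally, I would note that the residual symmetry $B \mapsto -B$ in the bipartite graph planar algebra identifies each branch with itself (not with the other), so the two families really are distinct.

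The main obstacle is the explicit solution of this quadratic system and the verification that the two $\varepsilon$-branches are genuinely inequivalent in $\cG_{2,+}$ (not merely related by a graph-automorphism gauge transformation); one has to exhibit a planar-algebra invariant distinguishing them, for instance by computing a small closed diagram whose value depends on $\varepsilon$. Because these verifications are mechanical but lengthy, I would, as the authors do, defer the full coordinate calculation to Appendix \ref{sec:Appendix}, and in the main text record only the shape of the argument above.
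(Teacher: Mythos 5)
Your overall strategy --- a direct coordinate computation in the bipartite graph planar algebra, imposing the linear conditions first and then the quadratic ones, with $\lambda$ arising as a gauge phase and $\varepsilon$ as the branch of a square root --- is exactly the strategy of the paper, and your instinct about where the two discrete branches come from is sound. However, as written the proposal defers the entire substance of the lemma: the claim that after the linear reductions the quadratic conditions amount to ``essentially a single scalar quadratic'' is asserted, not shown, and it is precisely this point that the lemma's count of \emph{exactly} two families rests on. The paper makes the quadratic analysis tractable by a reformulation you don't mention: since $B$ is a self-adjoint uncappable rotational eigenvector, the conditions $B^2=\jw{2}$ and $\check{B}^2=\jw{2}$ say exactly that $P=\tfrac{1}{2}(\jw{2}+B)$, $Q=\tfrac{1}{2}(\jw{2}-B)$ and $\check P=\tfrac{1}{2}(\jw{2}+\check B)$, $\check Q=\tfrac{1}{2}(\jw{2}-\check B)$ are orthogonal projections summing to $\jw{2}$. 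One then works block by block in the corner algebras $p_v\,\cG_{2,\pm}\,p_w$, which are small matrix algebras, where rank-one projections with prescribed entries are rigid: in the $3\times 3$ block $p_7\cG_{2,-}p_7$ the remaining entries are forced; in the $2\times 2$ block $p_6\cG_{2,+}p_4$ the off-diagonal entry of a rank-one projection with prescribed diagonal is determined up to the phase $\lambda$; and in the $2\times 2$ block $p_7\cG_{2,-}p_5$ the \emph{diagonal} of a rank-one projection with prescribed off-diagonal entry $c$ solves $x(1-x)=|c|^2$, whose two roots $\tfrac{1}{2}(1\pm\delta^{-1})$ give the binary choice $\varepsilon$. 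Without localizing the computation this way, you have not shown that no further continuous or discrete freedom survives, nor that both branches are actually consistent globally (the paper propagates them around the graph using its symmetries).

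Two smaller points. First, the assertion that for $\omega=-1$ the element ``acts nontrivially between the two $P,Q$-blocks, not within them'' is confused: up to scalar the new lowest-weight vector is the difference $P-Q$ of the two minimal projections at depth $2$, which is diagonal with respect to that decomposition; what $\omega=-1$ changes is the duality ($P$ and $Q$ are dual to each other rather than self-dual), which enters through the behaviour of $\cF$ and the factor $-i$ in $\check B=-i\cF(B)$. Second, distinguishing the two $\varepsilon$-branches at the level of this lemma does not require a closed-diagram invariant --- the explicit coefficients (e.g.\ $\coeff_{\in\check B}(7454)=-\varepsilon\delta^{-1}$) already separate them as elements of $\cG_{2,\pm}$; the octahedron trace is needed only later, to show that the two \emph{planar algebras they generate} are non-isomorphic.
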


Using the elements found in Lemmas \ref{lem:OneParameterPlus} and \ref{lem:OneParameterMinus}, the next 2 propositions follow from the classification results in \cite{PASmallDimPartIV}, or by a straightforward, but tedious calculation in the graph planar algebra $\cG_\bullet$.
We also check these computations directly in $\cG_\bullet$ in the {\tt Mathematica} notebook {\tt BraidRelationsForS.nb} bundled with the {\tt arXiv} source of this article.

\begin{prop}\label{prop:YangBaxterPlus}
The element
$$
R=R(\lambda)= \left(\frac{\sqrt{2}}{2}i\right) (\id_{2,+}-(1+\sqrt{2}) e_1)  + \left(\frac{\sqrt{2}}{2}\right)A(\lambda)
$$
is a quadratic $(+1)$-braiding such that the following hold independently of $\lambda$:
\begin{itemize}
\item
$a=i$
\item
$\cF^2(R)=R$, and 
\item
$R-R^{-1}=(i \sqrt{2})\id_{2,+}- i (2 + \sqrt{2})e_1$.
\end{itemize}
\end{prop}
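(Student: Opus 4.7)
The approach is a direct algebraic verification of the five clauses in the definition of a quadratic $\sigma$-braiding (with $\sigma = +1$), using only the abstract properties of $A = A(\lambda)$ from Lemma~\ref{lem:OneParameterPlus}: $A^* = A$, $\cF^2(A) = A$, $A$ uncappable, $A^2 = \jw{2}$, and $\cF(A)^2 = \jw{2}$. Since $\lambda$ enters only through the embedding of $A$ into $\cG_\bullet$ and never through these abstract identities, every consequence will automatically be $\lambda$-independent.

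First I would record the fundamental products. Uncappability of $A$ forces $e_1 A = A e_1 = 0$, and a short calculation (using $\delta = 1+\sqrt{2}$, so $\delta^2 = 3+2\sqrt{2}$) shows $(\id - \delta e_1)^2 = \id + e_1$, while $\jw{2} = \id - e_1$ in the normalization $e_1^2 = e_1$. Expanding $R^2$ term by term yields $R^2 = -e_1 + iA$, and then substituting $iA = i\sqrt{2}\,R + \id - \delta e_1$ (obtained by solving the definition of $R$ for $A$) rearranges this to the quadratic relation
\[
R^2 = i\sqrt{2}\,R + \id - (2+\sqrt{2})e_1,
\]
which is clause~(5). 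From it I read off $R^{-1} = R - i\sqrt{2}\,\id + i(2+\sqrt{2})e_1$, and using $Re_1 = e_1 R = -ie_1$ a direct check confirms $R\cdot R^{-1} = \id$, establishing the vertical half of clause~(1) and also the third bulleted formula $R - R^{-1} = i\sqrt{2}\,\id - i(2+\sqrt{2})e_1$. The horizontal half of clause~(1) is the $\cF$-rotated version of the same computation, using $\cF(A)^2 = \jw{2}$ in place of $A^2 = \jw{2}$.

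Clause~(3) is immediate: $\cF^2$ fixes $\id$ and $e_1$ and fixes $A$ by hypothesis, so $\cF^2(R) = R$, which verifies the second bulleted claim and gives $R - \cF^2(R) = 0 \in \TL_{2,+}$. For clause~(2), the right partial trace of $A$ vanishes by uncappability, and the Temperley-Lieb part contributes $\tfrac{\sqrt{2}}{2}i(\delta - 1)\,\id_{1,+} = i\,\id_{1,+}$, establishing $a = i$. The analogous computation for $\cF(R)$ (using $\cF(\id - \delta e_1) = -(\id - \delta e_1)$ and uncappability of $\cF(A)$) gives $-i\,\id_{1,+}$, which equals $\sigma a^{-1}\,\id_{1,+}$ when $\sigma = +1$.

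The main obstacle is clause~(4), the Yang--Baxter (ordinary braid) relation. Both sides live in $\cP_{3,+}$, which one would expand in the Temperley--Lieb three-box basis together with the three-box embeddings of $A$ obtained by placing $A$ adjacent to a Jones--Wenzl projection. Because $R$ is quadratic in $e_1$ and $A$, because $e_1 A = A e_1 = 0$, and because $A$ and $\cF(A)$ are $\cF^2$-fixed uncappables, every triple product reduces to a short list of identities among Jones--Wenzl 3-box projections and the embedded copies of $A$; these identities are themselves $\lambda$-independent, so the Yang--Baxter relation holds uniformly in $\lambda$. The authors note that this step can either be extracted from the classification in \cite{PASmallDimPartIV} or verified symbolically in $\cG_\bullet$, and I would follow the concrete approach implemented in the accompanying Mathematica notebook \texttt{BraidRelationsForS.nb}.
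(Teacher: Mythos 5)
Your explicit verification of clauses (1)--(3) and (5) and of the three bulleted identities is correct: with $e_1$ the normalized Jones projection and $\delta=1+\sqrt 2$ one indeed gets $(\id-\delta e_1)^2=\id+e_1$, $R^2=-e_1+iA$, $Re_1=e_1R=-ie_1$, hence the quadratic relation, the formula for $R^{-1}$, and $a=i$; the rotated computation for the horizontal half of bi-invertibility using $\cF(\id-\delta e_1)=-(\id-\delta e_1)$ and $\cF(A)^2=\jw{2}$ also goes through. This is in fact more detail than the paper supplies: the paper disposes of the whole proposition with the remark that it ``follows from the classification results in \cite{PASmallDimPartIV}, or by a straightforward, but tedious calculation in the graph planar algebra,'' checked in {\tt BraidRelationsForS.nb}. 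Since for the braid relation you ultimately defer to the same notebook (or to Liu's classification), your overall route coincides with the paper's.

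One sentence of your treatment of clause (4) is not correct and should not be mistaken for an argument: the $\sigma$-braid relation does \emph{not} reduce to ``$\lambda$-independent identities among Jones--Wenzl $3$-box projections and embedded copies of $A$'' derivable from the relations of Lemma \ref{lem:OneParameterPlus}. Here $\dim(\cP_{3,+})=15$, whereas $\TL_{3,+}$ together with the annular consequences of the single depth-$2$ low-weight vector $A$ spans only $5+6=11$ dimensions; products of two or three copies of $A$ placed in the two positions inside a $3$-box can (and do) have components in the remaining $4$ dimensions, which are invisible to the $2$-box relations. So the braid relation is genuinely extra information about the specific elements $A(\lambda)\in\cG_{2,+}$ and must be checked there (or extracted from \cite{PASmallDimPartIV}); its independence of $\lambda$ comes from the gauge action relating the various embeddings of the generated planar algebra into $\cG_\bullet$ (cf.\ the parenthetical remark in the proof of Theorem \ref{thm:ExistUnique}), not from formal manipulation of the $2$-box identities.
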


\begin{prop}\label{prop:YangBaxterMinus}
The element
$$
S=S(\lambda,\varepsilon)=\left(\frac{\varepsilon i}{\sqrt{4+2\sqrt{2}}}\right) \id_{2,+} + \left(\frac{1+\sqrt{2}}{\sqrt{4+2\sqrt{2}}}\right) (e_1 + B(\lambda, \varepsilon))
$$
is a quadratic $(\varepsilon i)$-braiding such that the following hold independently of $\lambda$:
\begin{itemize}
\item
$a_\varepsilon=\exp\left(\varepsilon \frac{3\pi i}{8}\right)$
\item
$S-\sigma^2\cF^2(S)=S+\cF^2(S)=\sigma\left(\sqrt{2-\sqrt{2}}\right)\id_{2,+}+ \left(\sqrt{2+\sqrt{2}}\right)e_1$, and
\item
$S-S^{-1}= \varepsilon i \left(\sqrt{2-\sqrt{2}}\right)\id_{2,+}$.
\end{itemize}
\end{prop}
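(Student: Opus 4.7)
The strategy is to verify each axiom of a quadratic $(\varepsilon i)$-braiding directly, using the algebraic properties of $B = B(\lambda,\varepsilon)$ established in Lemma \ref{lem:OneParameterMinus}: self-adjointness, $\cF^2(B) = -B$, uncappability, $B^2 = \jw{2}$, and $\cF(B)^2 = -\jw{2}$. These combine with the standard Temperley--Lieb relations ($e_1^2 = [2] e_1$, $\cF^2(e_1) = e_1$, $\cF^2(\id_{2,+}) = \id_{2,+}$) to control all relevant computations.

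First I would dispose of conditions (2) and (3). Since $B$ is uncappable, capping $S$ depends only on the $\id_{2,+}$ and $e_1$ components; matching coefficients identifies $a_\varepsilon = \exp(\varepsilon 3\pi i/8)$. For condition (3), note that $\sigma^2 = -1$ reduces the statement to $S + \cF^2(S) \in \TL_{2,+}$; since $\cF^2$ negates $B$ and fixes $\id_{2,+}$ and $e_1$, the $B$-contribution cancels and the stated formula $\sigma\sqrt{2-\sqrt{2}}\,\id_{2,+} + \sqrt{2+\sqrt{2}}\,e_1$ is a direct coefficient collection.

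The quadratic condition (5), bi-invertibility (1), and the difference formula $S - S^{-1} = \varepsilon i \sqrt{2-\sqrt{2}}\,\id_{2,+}$ all follow from a single computation of $S^2$. The inputs are $B^2 = \jw{2}$, the Temperley--Lieb relations for $e_1$, and the consequence of uncappability that $e_1 B e_1 = 0$ (so the cross-terms $e_1 B$ and $B e_1$ project trivially onto $e_1$). One obtains $S^2 = \alpha \id_{2,+} + \beta e_1 + \gamma S$ for explicit scalars, from which $S^{-1}$ is read off and the difference formula follows by matching with the coefficient of $\id_{2,+}$ in $S$.

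The main obstacle is the $\sigma$-braid relation (4), an identity in $\cG_{3,+}$. Expanding $S$ and $S^{-1}$ as $\{\id, e_1, B\}$-linear combinations and inserting them into both sides produces a large polynomial identity involving the tensor products $\id\otimes B$, $B\otimes\id$, Jones projections $e_1, e_2$, and their interactions with $\cF$. These reduce via the Lemma \ref{lem:OneParameterMinus} identities and Temperley--Lieb, with uncappability of $B$ killing many cross-terms. The independence from $\lambda$ is a priori: rescaling each minimal projection of $\cG_\bullet$ by a phase is a gauge transformation that acts on $\lambda$ but preserves the axioms, so it suffices to verify (4) at one value of $\lambda$. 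In practice this step is a symbolic computation in the graph planar algebra, as carried out in the accompanying \code{Mathematica} notebook \code{BraidRelationsForS.nb}; alternatively one can invoke the classification of small-dimensional quadratic $\sigma$-braided planar algebras from \cite{PASmallDimPartIV} to obtain (4) on general grounds.
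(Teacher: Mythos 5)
Your proposal takes the same route as the paper, which gives no more detail for this proposition than that it ``follows from the classification results in \cite{PASmallDimPartIV}, or by a straightforward, but tedious calculation in the graph planar algebra,'' checked in the bundled \code{Mathematica} notebook \code{BraidRelationsForS.nb}. Your hand-verifications of conditions (2), (3), (5) and the difference formula are correct in substance, and deferring the $\sigma$-braid relation to that computation (or to \cite{PASmallDimPartIV}), with $\lambda$-independence coming from the gauge action, is exactly what the paper does.

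Two small repairs. For the quadratic condition you need the cross-terms to vanish outright, $e_1B=Be_1=0$, not merely that they ``project trivially onto $e_1$'': knowing only $e_1Be_1=0$ would leave $e_1B+Be_1$ as a possibly nonzero element of $\cG_{2,+}$ outside $\TL_{2,+}\oplus\Complex S$, and the quadratic condition would fail. Uncappability does give the stronger statement, since $e_1B$ is $\delta^{-1}$ times a diagram that factors through a capping of $B$ (equivalently, $B=P-Q$ with $P,Q\leq\jw{2}$, so both are orthogonal to $e_1$). Granting that, writing $S=\alpha\,\id_{2,+}+\beta(e_1+B)$ with $\alpha=\varepsilon i/\sqrt{4+2\sqrt{2}}$ and $\beta=(1+\sqrt{2})/\sqrt{4+2\sqrt{2}}$, one gets $S^2=(\beta^2-\alpha^2)\id_{2,+}+2\alpha S$ with $\beta^2-\alpha^2=1$, hence $S^{-1}=S-2\alpha\,\id_{2,+}$ and $S-S^{-1}=\varepsilon i\sqrt{2-\sqrt{2}}\,\id_{2,+}$ as claimed. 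Second, bi-invertibility has two halves, and only the vertical one ($SS^{-1}=\id_{2,+}$) follows from the $S^2$ computation; the horizontal half (the coproduct of $S$ with $S^{-1}$ equalling the cup-cap diagram $\delta e_1$) is a separate check carried out on the Fourier side, using $\check{B}^2=(-i\cF(B))^2=\jw{2}$ rather than $B^2=\jw{2}$ --- it does not come for free from the multiplicative computation.
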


With the above lemmas and propositions, we now prove the main results of this section.

\begin{thm}\label{thm:ExistUnique}
There exists a unique subfactor planar algebra with principal graphs $\cS$.
\end{thm}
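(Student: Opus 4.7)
The plan is to combine the existence of the family $A(\lambda)$ with the parameter-independence of its defining relations. For existence, pick any $\lambda\in\mathbb{T}$ and let $A=A(\lambda)\in\cG_{2,+}$ be the element supplied by Lemma \ref{lem:OneParameterPlus}. Form the twist
\[
R(\lambda) = \tfrac{\sqrt{2}}{2}i\bigl(\id_{2,+}-(1+\sqrt{2})e_1\bigr)+\tfrac{\sqrt{2}}{2}A(\lambda),
\]
which is a quadratic $(+1)$-braiding by Proposition \ref{prop:YangBaxterPlus}. Let $\cP^\lambda_\bullet$ denote the planar subalgebra of $\cG_\bullet$ generated by $R(\lambda)$; note that $R(\lambda)$ and $A(\lambda)$ generate the same planar subalgebra since they differ by a Temperley--Lieb element. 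By Theorem \ref{thm:Evaluable}, $\cP^\lambda_\bullet$ is evaluable, so in particular its box spaces are finite dimensional. Positivity, sphericality, and the correct modulus $\delta=[2]$ are inherited from $\cG_\bullet$, so $\cP^\lambda_\bullet$ is a subfactor planar algebra.

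To identify its principal graphs, I first observe that at depth $2$ the three orthogonal minimal projections $e_1$, $P$, $Q$ (with $P-Q$ a scalar multiple of $A(\lambda)$ and $P+Q=\jw{2}$) reproduce the depth-$2$ data of $\cS$; both $P$ and $Q$ are self-dual because $\cF^2(A)=A$. Since $\cP^\lambda_\bullet$ is finite depth (being evaluable), its principal graph pair must be a finite extension of the weed appearing at this stage, and by Theorem \ref{thm:MainHelper} it lies among the very short list of possibilities. A direct computation of $\dim\cP^\lambda_{3,\pm}$ and $\dim\cP^\lambda_{4,\pm}$ inside $\cG_\bullet$ (using the normal form produced by the algorithm in the proof of Theorem \ref{thm:Evaluable}) pins the principal graphs down to $\cS$.

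For uniqueness, suppose $\cP_\bullet$ is any subfactor planar algebra with principal graphs $\cS$. The graph planar algebra embedding theorem gives an embedding $\iota\colon\cP_\bullet\hookrightarrow\cG_\bullet$. The new low-weight rotational eigenvector at depth $2$ (that is, the generator of the orthogonal complement of $\TL_{2,+}$ in $\cP_{2,+}$) is forced, up to scalar, to be $P-Q$, and the self-duality of $P,Q$ forces its rotational eigenvalue to be $+1$. After normalizing so that its square is $\jw{2}$ and it is self-adjoint, Lemma \ref{lem:OneParameterPlus} identifies $\iota$ of this element as $\pm A(\lambda)$ for some $\lambda\in\mathbb{T}$. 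Hence $\iota(\cP_\bullet)=\cP^\lambda_\bullet$, so $\cP_\bullet\cong\cP^\lambda_\bullet$. Finally, Proposition \ref{prop:YangBaxterPlus} explicitly exhibits a complete set of defining relations for $R(\lambda)$ whose structure constants do not depend on $\lambda$, so $\cP^\lambda_\bullet\cong\cP^{\lambda'}_\bullet$ as abstract planar algebras for all $\lambda,\lambda'$. This gives uniqueness.

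The main obstacle is pinning down the principal graph of $\cP^\lambda_\bullet$ as exactly $\cS$ rather than some honest extension. Evaluability gives finite depth but not the depth itself; one genuinely has to produce the four depth-$3$ vertices of $\cS$ and their dimensions and verify that the graph closes up at depth $3$, which requires either a direct box-space dimension count in the graph planar algebra (the kind of computation relegated to Appendix \ref{sec:Appendix}) or an appeal to the identification of the planar algebra as the reduction of $A_7$ at $\jw{2}$ mentioned after the main theorem.
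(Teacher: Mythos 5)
Your proposal is correct and follows essentially the same route as the paper: existence via Lemma \ref{lem:OneParameterPlus}, Proposition \ref{prop:YangBaxterPlus} and Theorem \ref{thm:Evaluable}, and uniqueness via the graph planar algebra embedding theorem together with the $\lambda$-independence of the relations. The only divergence is at the final identification of the principal graph, where the paper simply quotes Theorem \ref{thm:MainHelper} together with the eliminations of Sections \ref{sec:Dart} and \ref{sec:Kangaroo} (which reduce the possibilities to $\cS$ or $\cS'$, the two being distinguished by whether the depth-$2$ vertices are self-dual, i.e.\ by $\omega=+1$ versus $\omega=-1$), whereas you propose a direct box-space dimension count in $\cG_\bullet$ --- a workable but more laborious alternative that you correctly flag as the main remaining effort.
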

\begin{proof}
By Lemma \ref{lem:OneParameterPlus}, there is a 1-parameter family of elements $A(\lambda)$ in $\cG_{2,+}$ (the graph planar algebra), each of which gives a $(+1)$-braiding $R(\lambda)$ which satisfies the relations in Proposition \ref{prop:YangBaxterPlus}, independent of $\lambda$.
By Theorem \ref{thm:Evaluable}, each $R(\lambda)$ generates an isomorphic evaluable 1-supertransitive subfactor planar algebra.
By Theorem \ref{thm:MainHelper} and the results of Sections \ref{sec:Dart} and \ref{sec:Kangaroo}, the only possibility for the principal graph pair is $\cS$. This establishes existence.

To prove uniqueness, we note that any subfactor planar algebra with principal graphs $\cS$ has an uncappable self-adjoint low-weight rotational eigenvector $A$ with $\omega_A=1$ satisfying $A^2=\jw{2}$ and $\cF(A)^2=\jw{2}$.
By the graph planar algebra embedding theorem \cite{MR2812459}, there must be a  solution for these equations in the graph planar algebra $\cG_\bullet$.
These solutions are, up to sign, precisely the 1-parameter family found in Lemma \ref{lem:OneParameterPlus}. If we have $A = \pm A(\lambda)$, we then consider the $(+1)$-braiding
\begin{align*}
R & = \left(\frac{\sqrt{2}}{2}i\right) (\id_{2,+}-(1+\sqrt{2}) e_1)  \pm \left(\frac{\sqrt{2}}{2}\right)A \\
    &=  \left(\frac{\sqrt{2}}{2}i\right) (\id_{2,+}-(1+\sqrt{2}) e_1)  + \left(\frac{\sqrt{2}}{2}\right)A(\lambda)
\end{align*}
as in Proposition \ref{prop:YangBaxterPlus}. The planar algebra generated by this $R$ is independent of the choice of $\lambda$ and $\pm$ in $A=\pm A(\lambda)$, giving uniqueness. (Observe that although the subfactor planar algebra is unique, it does not embed uniquely in the graph planar algebra, giving the family $\pm A(\lambda)$. This is a because of the gauge action, c.f. \cite[Section 1.4]{1205.2742}.)

By uniqueness, the planar algebra $A$ generates must be self-dual. We can also check this explicitly by a straightforward calculation in $\cG_\bullet$. We have 
$$
\check{R} = \check{R}(\lambda)=  \left(\frac{\sqrt{2}}{2}i\right) (\id_{2,-}-(1+\sqrt{2}) e_1)  + \left(\frac{\sqrt{2}}{2}\right)\cF(A(\lambda))
$$
is a quadratic $(+1)$-braiding satisfying the same relations as $R$, so the map $R\leftrightarrow \check{R}$, or equivalently $A\leftrightarrow \check{A}=\cF(A)$, induces an isomorphism with the dual planar algebra.
We also check these computations directly in $\cG_\bullet$ in the {\tt Mathematica} notebook {\tt BraidRelationsForS.nb} bundled with the {\tt arXiv} source of this article.
\end{proof}

\begin{remark}
There is an easier construction of $\cS$.
It is the reduced subfactor of $A_7$ at $\jw{2}$.
We don't see, however, a corresponding easier proof of uniqueness.
\end{remark}

\begin{thm}\label{thm:ExistUniquePrime}
There exist exactly two distinct self-dual subfactor planar algebras with principal graphs $\cS'$ which are complex conjugate to each other.
\end{thm}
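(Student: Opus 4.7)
The plan is to parallel the proof of Theorem~\ref{thm:ExistUnique}, but with Lemma~\ref{lem:OneParameterMinus} and Proposition~\ref{prop:YangBaxterMinus} in place of their $\omega=+1$ counterparts. Fixing $\varepsilon\in\{+1,-1\}$ and any $\lambda\in\mathbb{T}$, the element $S(\lambda,\varepsilon)$ of Proposition~\ref{prop:YangBaxterMinus} is a quadratic $(\varepsilon i)$-braiding, and all the scalars in its defining relations are independent of $\lambda$. Theorem~\ref{thm:Evaluable} then tells us that the planar subalgebra $\cP^\varepsilon$ of $\cG_\bullet$ generated by $S(\lambda,\varepsilon)$ is evaluable, hence a subfactor planar algebra depending (up to isomorphism) only on $\varepsilon$. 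Theorem~\ref{thm:MainHelper} together with Sections~\ref{sec:Dart} and~\ref{sec:Kangaroo} forces the principal graphs of $\cP^\varepsilon$ to be $\cS'$, giving existence of both $\cP^+$ and $\cP^-$.

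For uniqueness, any subfactor planar algebra with principal graphs $\cS'$ has its two depth-$2$ minimal projections paired by duality, so it contains a self-adjoint, uncappable low-weight rotational eigenvector $B\in\cP_{2,+}$ with $\cF^2(B)=-B$ and $B^2=\jw{2}=\check{B}^2$. The graph planar algebra embedding theorem~\cite{MR2812459} realizes $B$ inside $\cG_\bullet$, and Lemma~\ref{lem:OneParameterMinus} then forces $B=\pm B(\lambda,\varepsilon)$ for some $\lambda$ and $\varepsilon$. As in the $\cS$ case, neither the sign nor $\lambda$ affects the isomorphism class of the resulting planar algebra (both are absorbed by the gauge action on embeddings, cf.~\cite[Section 1.4]{1205.2742}), so such a planar algebra must be isomorphic to $\cP^+$ or $\cP^-$.

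To see $\cP^+\not\cong\cP^-$, I would identify $\varepsilon i$ as an intrinsic rotational invariant of $\cP^\varepsilon$: the ratio of the through-strand coefficients in condition~(2) of the quadratic $\sigma$-braiding axioms applied to $S$ and $\cF^2(S)$ recovers $\sigma=\varepsilon i$, and this ratio is preserved by gauge and by the choice of embedding into $\cG_\bullet$. Complex conjugation of scalars converts $S(\lambda,\varepsilon)$ into an element of the same form with opposite $\varepsilon$, exhibiting $\overline{\cP^+}\cong\cP^-$. Symmetric self-duality in the sense of \cite[Section 5.1]{1208.3637} is verified exactly as in the $\cS$ case by checking directly in $\cG_\bullet$ that $\check{S}(\lambda,\varepsilon)$ is again a quadratic $(\varepsilon i)$-braiding satisfying the same defining relations, so that $B\leftrightarrow\check{B}$ (equivalently $S\leftrightarrow\check{S}$) induces an isomorphism of $\cP^\varepsilon$ with its dual; the routine computations can be outsourced to a \code{Mathematica} notebook analogous to \code{BraidRelationsForS.nb}. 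The main obstacle I anticipate is pinning down a genuinely intrinsic invariant separating $\cP^+$ from $\cP^-$ (rather than one that could shift under gauge), since everything else is a direct transcription of the argument for Theorem~\ref{thm:ExistUnique}.
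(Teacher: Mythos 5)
Most of your proposal tracks the paper's argument exactly: existence and uniqueness for each fixed $\varepsilon$ by transcribing the proof of Theorem \ref{thm:ExistUnique} with $B(\lambda,\varepsilon)$ and $S(\lambda,\varepsilon)$ in place of $A(\lambda)$ and $R(\lambda)$, self-duality via the dual braiding $T(\lambda,\varepsilon)$ built from $-i\cF(B)$, and complex conjugacy via the observation that $B(\lambda,\varepsilon)$ is the entrywise conjugate of $B(\overline{\lambda},-\varepsilon)$. All of that is fine.

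The genuine gap is exactly the one you flag yourself: showing $\cP^+\not\cong\cP^-$. Your proposed invariant --- the scalar $\sigma=\varepsilon i$ read off from condition (2) of the quadratic $\sigma$-braiding axioms for $S$ --- is a property of the \emph{chosen element} $S$, not manifestly of the planar algebra. The only canonically determined data in $\cP_{2,+}$ is the new low-weight rotational eigenvector, and after normalizing by self-adjointness, uncappability and $B^2=\jw{2}$ it is still only determined up to sign (this is why Lemma \ref{lem:OneParameterMinus} produces $\pm B(\lambda,\varepsilon)$). An abstract isomorphism $\cP^+\to\cP^-$ need not carry $S(\lambda,+1)$ to any element of the form $S(\mu,-1)$; it only carries $B(\lambda,+1)$ to $\pm B(\mu,-1)$ up to gauge, and a priori the sign flip could interchange the two $\varepsilon$-families, in which case no well-defined $\sigma$ would survive. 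Any invariant you extract must therefore be insensitive to $B\mapsto -B$. The paper's resolution is to evaluate a closed diagram with an \emph{even} number of copies of $B$: the trace of the octahedral network in six $B$'s equals $\varepsilon\,16(1-\sqrt{2})$, which is unchanged under $B\mapsto-B$ precisely because the number of vertices is even, and is gauge-independent because it is a closed diagram. This exhibits $\varepsilon$ as an honest invariant and completes the separation of $\cP^+$ from $\cP^-$; without some such even-degree computation (or an argument that the planar algebra generated by $B(\lambda,-1)$ contains no quadratic $(+i)$-braiding), your proof is incomplete at this step.
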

\begin{proof}
The proof for existence and uniqueness for a fixed choice of $\varepsilon$ is identical to the proof of existence and uniqueness from Theorem \ref{thm:ExistUnique}, exchanging $A(\lambda)$ with $B(\lambda,\varepsilon)$ and $R(\lambda)$ with $S(\lambda,\varepsilon)$ and $\cS$ with $\cS'$.

We note that the $B(\lambda,+1)$ and the $B(\lambda,-1)$ generate non-isomorphic subfactor planar algebras.
In the planar algebra generated by $B(\lambda,\varepsilon)$, we look for the low weight rotational eigenvectors $B$, satisfying
\begin{enumerate}[(1)]
\item
$B^*=B$ and $\cF^2(B)=-B$,
\item
$B$ is uncappable, and
\item
$B^2=\jw{2}$ and $\check{B}^2=(-i\cF(B))^2=\jw{2}$,
\end{enumerate}
and find $\pm B(\lambda,\varepsilon)$. 
We next evaluate the octahedron in $B$
$$
\Tr \left(
\begin{tikzpicture}[baseline=-.3cm]
	\coordinate (a) at (0,0);
	\coordinate (b) at (0,1);
	\coordinate (c) at (1,.5);
	\coordinate (d) at (1,-.5);
	\coordinate (e) at (1,-1.5);
	\coordinate (f) at (0,-1);
	\draw (b) -- (c) -- (a) -- (d) -- (f) -- (e);
	\draw ($(f)+(0,-1.2)$) -- ($(b)+(0,.7)$);
	\draw (e) -- ($(e)+(-.4,-.7)$);
	\draw (b) -- ($(b)+(.4,.7)$);
	\draw ($(c)+(0,1.2)$) -- ($(e)+(0,-.7)$);
	\ncircle{unshaded}{(a)}{.4}{180}{$B$}
	\ncircle{unshaded}{(b)}{.4}{180}{$B$}
	\ncircle{unshaded}{(c)}{.4}{115}{$B$}
	\ncircle{unshaded}{(e)}{.4}{115}{$B$}
	\ncircle{unshaded}{(d)}{.4}{115}{$B$}
	\ncircle{unshaded}{(f)}{.4}{180}{$B$}
\end{tikzpicture}
\right)
=
\varepsilon 16(1-\sqrt{2}),
$$
which is independent of the choice $\pm$ because it has an even number of vertices. 
Thus $\varepsilon$ is an invariant of the planar algebra.

Each of these subfactors is self-dual.
One can check by a straightforward calculation in $\cG_\bullet$ that
$$
T(\lambda,\varepsilon)= \left(\frac{\varepsilon i}{\sqrt{4+2\sqrt{2}}}\right) \id_{2,-} + \left(\frac{1+\sqrt{2}}{\sqrt{4+2\sqrt{2}}}\right) (e_1 - i\cF(B(\lambda, \varepsilon)))
$$
is a quadratic $(\varepsilon i)$-braiding satisfying the same relations as $S(\lambda,\varepsilon)$, so the map $S(\lambda,\varepsilon)\leftrightarrow T(\lambda,\varepsilon)$, or equivalently $B\leftrightarrow \check{B}=-i\cF(B)$, induces an isomorphism with the dual planar algebra.
To see these subfactors are complex conjugate to each other, one can check by a straightforward calculation in $\cG_\bullet$ that $B(\lambda,\varepsilon)$ is the entry-wise complex conjugate of $B(\overline{\lambda},-\varepsilon)$.
We also check these computations directly in $\cG_\bullet$ in the {\tt Mathematica} notebook {\tt BraidRelationsForS.nb} bundled with the {\tt arXiv} source of this article.
\end{proof}

\begin{remark}
The self-dualities in the proofs of Theorems \ref{thm:ExistUnique} and \ref{thm:ExistUniquePrime} are symmetric in the sense of \cite[Section 5.1]{1208.3637}.
Hence these planar algebras are really unshaded, and they give unitary $\bbZ/2\bbZ$-graded fusion categories.
\end{remark}

\section{Eliminating \texorpdfstring{$\cD,\cD'$}{D, D'}}\label{sec:Dart}

In this section we show that $\cS,\cS'$ are the only principal graphs which are extensions of the graphs $\cD$ or $\cD'$ at most index $6 \frac{1}{5}$, where
\begin{align*}
\cD & = \left(\bigraph{bwd1v1p1v1x0p1x1p0x1duals1v1x2},\bigraph{bwd1v1p1v1x0p1x1p0x1duals1v1x2} \right) &
\cD' & = \left(\bigraph{bwd1v1p1v1x0p1x1p0x1duals1v2x1}, \bigraph{bwd1v1p1v1x0p1x1p0x1duals1v2x1} \right).
\end{align*}

We consider the cases $\cD,\cD'$ at the same time.
We give the following names to the vertices with depth at most 3 of $\Gamma_+$.
\begin{align*}
\begin{tikzpicture}[baseline=-.1cm]
\draw[fill] (0,0) circle (0.05);
\draw (0.,0.) -- (1.,0.);
\draw[fill] (1.,0.) circle (0.05);
\draw (1.,0.) -- (2.,-0.25);
\draw (1.,0.) -- (2.,0.25);
\draw[fill] (2.,-0.25) circle (0.05) node[below]{$P$};
\draw[fill] (2.,0.25) circle (0.05) node[above]{$Q$};
\draw (2.,-0.25) -- (3.,-0.5);
\draw (2.,-0.25) -- (3.,0.);
\draw (2.,0.25) -- (3.,0.);
\draw (2.,0.25) -- (3.,0.5);
\draw[fill] (3.,-0.5) circle (0.05) node[below]{$P'$};
\draw[fill] (3.,0.) circle (0.05) node[right]{$[R]$};
\draw[fill] (3.,0.5) circle (0.05) node[above]{$Q'$};
\end{tikzpicture}
.
\end{align*}

\begin{fact}\label{fact:Dimensions}
We have the following formulas for the dimensions of the vertices.
\begin{align*}
\dim(P)+\dim(Q)&=[3]\\
\dim([R])&=\frac{\dim(P)+\dim(Q)}{[2]}=\frac{[3]}{[2]}\\
\dim(P') &= [2]\dim(P)-\dim([R])-[2] \\
\dim(Q') &= [2]\dim(Q)-\dim([R])-[2].
\end{align*}
\end{fact}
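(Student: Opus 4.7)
The plan is to apply the Perron--Frobenius eigenvalue equation at each vertex of the bipartite graph $\Gamma_+$. Since $\cP_\bullet$ has modulus $\delta=[2]$, the quantum dimensions of the vertices of $\Gamma_+$ form the unique (up to scalar) Perron--Frobenius eigenvector with eigenvalue $[2]$, so at each vertex $v$ one has the local relation
\[
[2]\dim(v) \;=\; \sum_{w \sim v} m(v,w)\,\dim(w),
\]
where $m(v,w)$ is the edge multiplicity. Normalising $\dim(\star)=1$ gives $\dim(v_1)=[2]$ at the depth-$1$ vertex.

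First I would apply the local relation at $v_1$. Its neighbors are $\star$ (dimension $1$) at depth $0$ together with $P$ and $Q$ at depth $2$, each joined by a single edge. This yields $[2]^2 = 1+\dim(P)+\dim(Q)$; combined with the standard identity $[2]^2 = [3]+1$, this gives the first formula $\dim(P)+\dim(Q)=[3]$.

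Next I would apply the relation at $[R]$. The weed structure of $\cD$ and $\cD'$ specifies that the only even-depth neighbors of $[R]$ are $P$ and $Q$, each joined by a single edge, so $[2]\dim([R]) = \dim(P)+\dim(Q) = [3]$, giving $\dim([R]) = [3]/[2]$. Finally, applying the local relation at $P$, whose neighbors are $v_1$ (contributing $[2]$), $P'$, and $[R]$, produces $[2]\dim(P) = [2]+\dim(P')+\dim([R])$, which rearranges to the stated expression for $\dim(P')$; the formula for $\dim(Q')$ is obtained identically by applying the relation at $Q$.

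There is no real obstacle: each formula is an immediate bookkeeping consequence of one instance of the Perron--Frobenius local relation. The only mild point worth flagging is that the formula for $\dim([R])$ uses that $[R]$ has no neighbors at depth $4$, which is precisely what the weed specification enforces at this stage of the analysis; any subsequent enlargement of $\Gamma_+$ beyond the weed would be tracked separately and would modify this formula by adding the dimensions of the new depth-$4$ neighbors.
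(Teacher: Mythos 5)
Your proposal is correct and is exactly the standard argument the paper leaves implicit: the paper states this as a Fact without proof, and each formula is one instance of the local dimension relation $[2]\dim(v)=\sum_{w\sim v}\dim(w)$ applied at the vertices $v_1$, $[R]$, $P$, and $Q$ respectively (together with $[2]^2=[3]+1$). Your caveat about $\dim([R])$ is well taken --- that formula genuinely fails for extensions in the class $\cD_3$, where $[R]$ acquires depth-$4$ neighbours --- and the paper's usage is consistent with this, since it only invokes $\dim([R])=[3]/[2]$ when ruling out $\cD_1$ and $\cD_2$.
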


We divide the extensions of $\cD,\cD'$ into three families based on which vertices at depth 3 connect to a vertex at depth 4.
\begin{defn}
\mbox{}
\begin{enumerate}
\item
An extension of $\cD$ or $\cD'$ is in $\cD_1$ if exactly one of $P',Q'$ connects to a vertex at depth 4, and $[R]$ and the other do not.
\item
An extension of $\cD$ or $\cD'$ is in $\cD_2$ if both $P',Q'$ connect to vertices at depth 4, and $[R]$ does not.
\item
An extension of $\cD$ or $\cD'$ is in $\cD_3$ if $[R]$ connects to a vertex at depth 4.
\end{enumerate}
\end{defn}

We rule out these families below index $6 \frac{1}{5}$ separately in the following subsections.

\subsection{Ruling out \texorpdfstring{$\cD_1$}{D_1}}
We now suppose that $(\Gamma_+,\Gamma_-)$ is in $\cD_1$. By symmetry, we may assume that only $Q'$ connects to vertices at depth 4.

\begin{lem}
$$
\displaystyle\dim(P')
=\frac{1+2[3]}{[2][3]}
=\frac{q \left(2 q^4+3 q^2+2\right)}{\left(q^2+1\right)\left(q^2-q+1\right) \left(q^2+q+1\right)}.
$$
\end{lem}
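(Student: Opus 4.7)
The plan is to exploit the fact that in the case $\cD_1$ the vertex $P'$ has no neighbors at depth 4, so the bimodule-tensor recursion at $P'$ collapses to a single term. Specifically, multiplying the projection corresponding to $P'$ by the generator and decomposing, we get $[2]\dim(P') = \dim(P)$, since $P$ is the unique neighbor of $P'$ (at either adjacent depth).

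Next I would feed this relation into the formula
\[
\dim(P') = [2]\dim(P) - \dim([R]) - [2]
\]
from Fact~\ref{fact:Dimensions}, substituting $\dim(P) = [2]\dim(P')$ and $\dim([R]) = [3]/[2]$. This produces a single linear equation in $\dim(P')$:
\[
\dim(P')\bigl(1 - [2]^2\bigr) = -\frac{[3]}{[2]} - [2],
\]
which rearranges to $\dim(P') = \dfrac{[3] + [2]^2}{[2]\bigl([2]^2 - 1\bigr)}$.

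To finish, I would apply the standard quantum number identities $[2]^2 = [3] + 1$ (and hence $[2]^2 - 1 = [3]$). This converts the expression to $\dim(P') = (1 + 2[3])/([2][3])$, matching the first form in the statement. Expanding $[2] = q + q^{-1}$ and $[3] = q^2 + 1 + q^{-2}$, clearing denominators by $q^3$, and using the factorization $(q^2+1)(q^4+q^2+1) = (q^2+1)(q^2-q+1)(q^2+q+1)$ gives the stated rational expression in $q$.

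There is no real obstacle here; the argument is just careful bookkeeping. The only thing one must be alert to is applying the dimension recursion exclusively at those depth-3 vertices that fail to extend — here $P'$ (and automatically $[R]$, already used in Fact~\ref{fact:Dimensions}) — and not at $Q'$, which is permitted to connect into depth 4.
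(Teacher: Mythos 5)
Your proposal is correct and follows the paper's own argument exactly: both use that $P'$ has no depth-4 neighbours in $\cD_1$ to get $\dim(P)=[2]\dim(P')$, substitute into the recursion from Fact~\ref{fact:Dimensions}, and simplify with $[2]^2-1=[3]$. Nothing further is needed.
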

\begin{proof}
Since $\dim(P)=[2]\dim(P')$, it follows from Fact \ref{fact:Dimensions} that
$$
\dim(P')
=[2]^2\dim(P')-\dim([R])-[2]
\Longleftrightarrow
\dim(P')
=\frac{\dim([R])+[2]}{[3]}
=\frac{1+2[3]}{[2][3]}.
$$
\end{proof}

\begin{cor}
No graph pairs in $\cD_1$ are principal graphs of subfactors.
\end{cor}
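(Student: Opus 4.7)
The plan is to use the formula
$$
\dim(P') = \frac{1 + 2[3]}{[2][3]}
$$
from the preceding lemma to derive a contradiction with the basic principle that every vertex of a principal graph corresponds to a simple bimodule of dimension at least $1$. Specifically, I would show that $\dim(P') < 1$ throughout the index range $[3+\sqrt{5}, 6\tfrac{1}{5}]$.

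Writing $\delta = [2]$ and using $[3] = \delta^2 - 1$, the inequality $\dim(P') < 1$ is equivalent to
$$
(\delta - 2)(\delta^2 - 1) > 1.
$$
The left-hand side is strictly increasing in $\delta$ for $\delta > 2$ (its derivative $3\delta^2 - 4\delta - 1$ is positive there), so it suffices to check the inequality at the lower endpoint $\delta = \sqrt{3+\sqrt{5}}$ of the index range. Using the identity $\sqrt{3+\sqrt{5}} = (1+\sqrt{5})/\sqrt{2}$, a short calculation confirms the inequality holds, with the left side equal to $(7+3\sqrt{5}-4\sqrt{2}-2\sqrt{10})/\sqrt{2}$, approximately $1.22$.

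There is no substantive obstacle beyond this arithmetic check: the geometry forces $P'$ to behave as a ``leaf'' vertex, so the end-of-graph equation $[2]\dim(P') = \dim(P)$ together with the dimension equation at $P$ already determines $\dim(P')$ entirely in terms of $\delta$. The only subtlety worth noting is that the critical value at which $\dim(P') = 1$ occurs near $\delta^2 \approx 5.06$, which is only slightly below $3+\sqrt{5} \approx 5.236$; the inequality $\dim(P') < 1$ is therefore tight at the lower endpoint, though still strict, and becomes comfortably so across the rest of the range.
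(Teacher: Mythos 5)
Your proof is correct and follows essentially the same route as the paper: both arguments show that $\dim(P')<1$ by combining a monotonicity statement with an evaluation near the bottom of the admissible range, contradicting the fact that every vertex has dimension at least $1$. The only difference is cosmetic --- the paper works in the variable $q$, shows $\dim(P')$ is decreasing, and anchors the contradiction at the graph norm of $\cD_1$ itself ($\delta^2\approx 5.45$), whereas you work directly in $\delta$ and use the ambient lower bound $3+\sqrt{5}\approx 5.24$; both lie above the critical value $\delta^2\approx 5.06$, so both suffice.
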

\begin{proof}
Assume all vertices have dimension at least one.
First,
$$
\frac{\partial}{\partial q} \dim(P')
=
\frac{-2 q^{10}-5 q^8-4 q^6+4 q^4+5 q^2+2}{\left(q^6+2 q^4+2 q^2+1\right)^2},
$$
which is easily seen to be negative (e.g. by pairing up terms in the numerator, and using $q>1$).
Since $\dim(P') \approx 0.998428 $ at $q=1.64$, if $\dim(P') \geq 1$, we must have $q< 1.64$.
But since the norm of $\cD_1$ is $q+q^{-1}$ where $q\approx 1.76918$, we have $q>1.76$, a contradiction.
\end{proof}

\subsection{Ruling out \texorpdfstring{$\cD_2$}{D_2}}
We now suppose that $(\Gamma_+,\Gamma_-)$ is in $\cD_2$, so both $P'$ and $Q'$ connect to vertices at depth 4 of $\Gamma_+$.

\begin{lem}\label{lem:Automorphism}
Suppose a vertex $V$ of $\Gamma_+$ has dimension 1. Then $V$ is univalent.
\end{lem}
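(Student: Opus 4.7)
The plan is to translate the condition $\dim(V) = 1$ into the categorical statement that the simple bimodule $X_V$ corresponding to $V$ is \emph{invertible}, and then exploit this to bound the number of edges at $V$.

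First, I would invoke the unitary $C^*$-tensor 2-categorical picture of the standard invariant: the principal graph $\Gamma_+$ of $N\subset M$ encodes the irreducible $N$-$N$ and $N$-$M$ sub-bimodules of tensor powers of $\,_N M_M$, with each vertex $V$ at depth $n$ corresponding to a simple bimodule $X_V$ whose Frobenius--Perron dimension is exactly $\dim(V)$. A standard inequality in any unitary fusion category says that every simple object $X$ satisfies $\dim(X) \ge 1$, with equality if and only if $X$ is invertible, i.e.\ $X_V \otimes \overline{X_V} \cong \mathbf{1}$ (this follows from the existence of a nonzero coevaluation, equivalently from the AM-GM type estimate on the categorical trace of $X_V \otimes \overline{X_V}$). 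Thus $\dim(V) = 1$ forces $X_V$ to be invertible.

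Next I would count the edges at $V$. Under the bimodule identification, the edges of $\Gamma_+$ incident to $V$ are enumerated with multiplicity by the simple summands of $X_V \otimes \rho$, where $\rho = \,_N M_M$ or $\,_M N_N$ (according to the parity of the depth of $V$) is the basic bimodule, and $\rho$ is itself simple because the subfactor is irreducible. Since tensoring with an invertible object permutes simples, $X_V \otimes \rho$ is simple. Hence $V$ is connected to exactly one neighbor, with multiplicity $1$, i.e.\ $V$ is univalent.

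The only real subtlety, and what I expect to be the main obstacle to writing down, is the bookkeeping that confirms both the ``up'' edges (to depth $n-1$) and the ``down'' edges (to depth $n+1$) from $V$ are simultaneously accounted for by the single tensor product $X_V \otimes \rho$ via Frobenius reciprocity, without any double counting (a given neighbor sits at a uniquely determined depth). It is worth noting that the purely Perron--Frobenius approach, using $\delta\cdot\dim(V)=\sum_{U} m_{VU}\dim(U)$ together with the index bound $\delta^2\le 6\tfrac15$, only gives the valence bound $\lfloor \delta\rfloor = 2$ and so is not strong enough to conclude univalency on its own: the categorical input (invertibility) is essential.
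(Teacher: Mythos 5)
Your proof is correct and is essentially the paper's argument: $\dim(V)=1$ makes the corresponding bimodule invertible, the basic bimodule $\leftidx{_N}{M}{_M}$ is irreducible since the subfactor is irreducible, and tensoring an invertible object with an irreducible one yields an irreducible object, so $V$ has a single neighbour. The depth bookkeeping you worry about is exactly what the paper dispatches with the phrase ``by Frobenius reciprocity,'' so there is no gap.
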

\begin{proof}
Since $V$ has dimension 1, $V\otimes \overline{V}\cong \id$ and $\overline{V}\otimes V\cong \id$.
The edges of $\Gamma_+$ correspond to tensoring with the standard bimodule $\sb{N}M_M$ or its dual $\sb{M}\overline{M}_N$.
Since $\cP_\bullet$ is 1-supertransitive, $M$ is irreducible, and thus $VM$ or $V\overline{M}$ (depending on whether $V$ occurs at an even or odd depth) is irreducible by Frobenius reciprocity.
\end{proof}

\begin{lem}\label{lem:Univalent}
When $\delta^2< 6.245$, no vertex at depth 4 of $\Gamma_+$ can have dimension 1.
\end{lem}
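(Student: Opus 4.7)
The plan is to derive a contradiction by chasing Perron--Frobenius dimensions down the graph.

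First, I would invoke Lemma~\ref{lem:Automorphism}: any depth-$4$ vertex $V$ with $\dim(V)=1$ is univalent, and in fact the Frobenius-reciprocity argument in its proof (using that $V\tensor_N M$ is irreducible) shows that the unique edge at $V$ is simple. Since $V$ first appears at depth $4$, this edge goes to a depth-$3$ vertex, and the $\cD_2$ hypothesis that $[R]$ has no depth-$4$ children leaves only $P'$ or $Q'$ as the parent; by the $P\leftrightarrow Q$ symmetry of $\cD$ and $\cD'$ we may take it to be $P'$. The Perron--Frobenius eigenvalue equation at $V$ then forces $\dim(P')=[2]$.

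Next, using Fact~\ref{fact:Dimensions} and $\dim(P)+\dim(Q)=[3]$, I would solve for
\[
\dim(P)=3-\delta^{-2},\qquad \dim(Q)=\delta^2-4+\delta^{-2},\qquad \dim(Q')=\frac{\delta^4-6\delta^2+2}{\delta}.
\]
The $\cD_2$ hypothesis also produces at least one depth-$4$ child $W$ of $Q'$, with $\dim(W)\ge 1$ and edge multiplicity $\ge 1$. Hence $[2]\dim(Q')-\dim(Q)\ge 1$, which on setting $x=\delta^2$ becomes the cubic inequality $x^3-7x^2+5x-1\ge 0$. This cubic has a unique real root (both critical points are below the $x$-axis) slightly above $6.22$, already giving a preliminary bound.

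The main obstacle is pushing this from $\approx 6.22$ up to the stated $6.245$. I would do so by a subcase analysis on $W$: a symmetric application of the whole argument with $W$ replacing $V$ rules out $\dim(W)=1$, since that would force $\dim(P')=\dim(Q')=[2]$ and hence $\dim(P)=\dim(Q)=3-\delta^{-2}$, so $\delta^2$ would be the positive root of $x^2-7x+2$ (namely $(7+\sqrt{41})/2\approx 6.70$), outside the allowed range. The strict inequality $\dim(W)>1$, combined with the algebraic-integer constraint on $\dim(W)$ and the Perron--Frobenius equation at $W$ (relating $\dim(W)$ to $\dim(Q')$ and any depth-$5$ descendants of $W$), then sharpens the polynomial inequality to one that fails on $\delta^2<6.245$, yielding the desired contradiction.
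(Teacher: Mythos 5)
Your setup coincides with the paper's: the unique edge at the dimension--$1$ vertex forces $\dim(P')=[2]$, and your formulas $\dim(P)=3-\delta^{-2}$, $\dim(Q)=\delta^2-4+\delta^{-2}$, $\dim(Q')=(\delta^4-6\delta^2+2)/\delta$ are all correct, as is your observation that the crude bound $\dim(Q'')\geq 1$ only yields $x^3-7x^2+5x-1\geq 0$, whose relevant root is about $6.222$ --- short of $6.245$. Your first subcase is also fine: if some depth--$4$ child of $Q'$ has dimension $1$, then $\dim(Q')=[2]$ as well, forcing $\delta^4-7\delta^2+2=0$, i.e.\ $\delta^2=(7+\sqrt{41})/2\approx 6.70$, out of range. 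The genuine gap is the remaining subcase $\dim(W)>1$, which you delegate to ``the algebraic-integer constraint on $\dim(W)$ and the Perron--Frobenius equation at $W$.'' Neither closes it: an algebraic integer greater than $1$ can be arbitrarily close to $1$ (and since the graph beyond depth $4$ is undetermined, $\dim(W)$ is not pinned down as any specific algebraic number), while the eigenvalue equation at $W$ gives only $\dim(W)\geq \dim(Q')/[2]$, which at $\delta^2=6.245$ is about $0.57$ and hence weaker than the bound $\dim(W)\geq 1$ you already used.

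The missing ingredient is the standard dichotomy for irreducible bimodules: $W\otimes\overline{W}$ contains the trivial bimodule exactly once and every other irreducible summand has dimension at least $1$, so either $\dim(W)=1$ or $\dim(W)\geq\sqrt{2}$. Granting this, your second subcase does close: $\dim(Q'')\geq\sqrt{2}$ gives $x^3-7x^2+(6-\sqrt{2})x-1\geq 0$ with $x=\delta^2$, which is false on all of $[3+\sqrt{5},6.245]$ (the polynomial is increasing there and its root is near $6.30$), so your strategy succeeds once this fact is supplied --- and in fact reaches slightly past $6.245$. The paper avoids the subcase analysis entirely by applying the same dichotomy one depth earlier, to $Q'$ itself: from your formula, $\dim(Q')<\sqrt{2}$ whenever $q<1.9988$, hence $\dim(Q')=1$, hence $Q'$ is univalent by Lemma~\ref{lem:Automorphism} and has no children at depth $4$, contradicting the $\cD_2$ hypothesis. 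You should either adopt that shortcut or state and use the $\sqrt{2}$ dichotomy explicitly; as written, the step from $\approx 6.22$ to $6.245$ is not proved.
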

\begin{proof}
First, note that if $(q+q^{-1})^2=\delta^2=6.245$, then $q\approx 1.99867$.
Without loss of generality, suppose $P'$ attaches to a vertex of dimension 1, which must be univalent by Lemma \ref{lem:Automorphism}.
Then $\dim(P')=[2]$, and using Fact \ref{fact:Dimensions}, we calculate
\begin{align*}
\dim(P)&=\frac{2[2]+\frac{[3]}{[2]}}{[2]}=\frac{2+3[3]}{[2]^2}
\\
\dim(Q)&=[3]-\dim(P)=\frac{[5]-[3]-1}{[2]^2}
\\
\dim(Q')&=[2]\dim(Q)-[2]-\frac{[3]}{[2]}=\frac{[5]-3[3]-2}{[2]}=\frac{q^8-2 q^6-4 q^4-2 q^2+1}{q^3 \left(q^2+1\right)}.
\end{align*}
Now when $q<1.9988$, $\dim(Q')<\sqrt{2}$. Hence $\dim(Q')=1$, which means $Q'$ is univalent by Lemma \ref{lem:Automorphism}. But then $(\Gamma_+,\Gamma_-)\in \cD_1$, not $\cD_2$, a contradiction.
\end{proof}

\begin{lem}\label{lem:D2}
Let $P'',Q''$ be the sum of all vertices at depth $4$ connected to $P',Q'$ respectively. Then
$$
\dim(P'')+\dim(Q'')=[5]-3[3]-1=\frac{q^8-2 q^6-3 q^4-2 q^2+1}{q^4}.
$$
\end{lem}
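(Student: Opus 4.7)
Proof proposal for Lemma \ref{lem:D2}:

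The plan is to exploit the general principle that for any vertex $V$ of $\Gamma_+$ at depth $n$, multiplication of $\dim(V)$ by $[2]$ equals the sum of dimensions of the neighboring vertices (at depths $n-1$ and $n+1$), counted with edge multiplicities. This is just the Frobenius reciprocity / Perron--Frobenius relation for the bipartite adjacency action, and is exactly the inductive rule used to derive the formulas in Fact \ref{fact:Dimensions}.

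I would apply this rule to the depth-$3$ vertices $P'$ and $Q'$. Reading off the graph $\cD$ (or $\cD'$), $P'$ is connected to $P$ at depth $2$ and, by the hypothesis that we are in $\cD_2$, to a (possibly reducible) collection of depth-$4$ vertices whose sum is $P''$; it is \emph{not} connected to $[R]$ or $Q$. Similarly for $Q'$. Therefore
\begin{align*}
[2]\dim(P') &= \dim(P) + \dim(P''), \\
[2]\dim(Q') &= \dim(Q) + \dim(Q'').
\end{align*}
Substituting the formulas for $\dim(P')$ and $\dim(Q')$ from Fact \ref{fact:Dimensions}, I get
\begin{align*}
\dim(P'') &= [2]\dim(P') - \dim(P) = ([2]^2-1)\dim(P) - [2]\dim([R]) - [2]^2, \\
\dim(Q'') &= [2]\dim(Q') - \dim(Q) = ([2]^2-1)\dim(Q) - [2]\dim([R]) - [2]^2.
\end{align*}

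Now I would use the identities $[2]^2-1 = [3]$ and $\dim([R]) = [3]/[2]$, together with $\dim(P)+\dim(Q) = [3]$, to add these two expressions:
\begin{align*}
\dim(P'')+\dim(Q'') &= [3]\bigl(\dim(P)+\dim(Q)\bigr) - 2[2]\dim([R]) - 2[2]^2 \\
&= [3]^2 - 2[3] - 2([3]+1) \\
&= [3]^2 - 4[3] - 2.
\end{align*}
The final step is the quantum-number identity $[3]^2 = [5] + [3] + 1$, which is a one-line expansion in $q$; feeding this in gives $\dim(P'')+\dim(Q'') = [5] - 3[3] - 1$, and the explicit rational function in $q$ follows by writing $[n] = (q^n-q^{-n})/(q-q^{-1})$. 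No step here is the ``main obstacle'' — this is purely a bookkeeping lemma — but the only point requiring any care is confirming from the drawn graph that $P'$ has no hidden edge to $[R]$ or to a second depth-$2$ vertex, so that the Perron--Frobenius relation at $P'$ really reads $[2]\dim(P') = \dim(P) + \dim(P'')$.
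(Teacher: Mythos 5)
Your proposal is correct and follows essentially the same route as the paper: both use the Perron--Frobenius relations $\dim(P'')=[2]\dim(P')-\dim(P)$ and $\dim(Q'')=[2]\dim(Q')-\dim(Q)$, substitute the formulas from Fact \ref{fact:Dimensions}, and simplify with standard quantum-integer identities. The only difference is cosmetic bookkeeping in the order of substitution, so there is nothing to add.
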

\begin{proof}
Using Fact \ref{fact:Dimensions} and the formulas
\begin{align*}
\dim(P'') &= [2]\dim(P')-\dim(P) \\
\dim(Q'') &= [2]\dim(Q')-\dim(Q),
\end{align*}
it follows that
\begin{align*}
\dim(P'')+\dim(Q'')
&= [2](\dim(P')+\dim(Q'))-(\dim(P)+\dim(Q)) \\
&= [2]( [2](\dim(P)+\dim(Q))-2\dim([R])-2[2])-(\dim(P)+\dim(Q)) \\
&=[2]\left([2][3]-2\frac{[3]}{[2]}-2[2]\right)-[3] \\
&= [5] - 3[3] - 1.
\end{align*}
\end{proof}

\begin{prop}\label{prop:Only2Edges}
If $\delta^2\leq 6 \frac{1}{5}$, then there are exactly 2 edges in $\Gamma_+$ between depths 3 and 4.
\end{prop}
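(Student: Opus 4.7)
The plan is to combine Lemma \ref{lem:D2} with a universal lower bound on the dimension of each vertex at depth 4, to get an upper bound on the number of edges.

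\emph{Step 1: Lower bound on dimensions at depth 4.} I would first argue that every vertex at depth 4 of $\Gamma_+$ has dimension at least $\sqrt{2}$. Indeed, let $v$ be such a vertex, corresponding to an irreducible $N$-$N$ bimodule $V$. Since $\dim(V)^2 = \dim(V\otimes\bar V)$, and $V\otimes\bar V$ contains the trivial bimodule once, we get $\dim(V)^2\geq 1$ always; if $V$ is non-invertible, there is at least one other irreducible summand contributing dimension $\geq 1$, yielding $\dim(V)\geq\sqrt{2}$. The hypothesis $\delta^2\leq 6\frac{1}{5}<6.245$ together with Lemma \ref{lem:Univalent} forbids $\dim(V)=1$, so indeed $\dim(V)\geq\sqrt{2}$.

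\emph{Step 2: Counting edges.} Let $k$ denote the total number of edges (with multiplicity) from depth 3 to depth 4. Since $\cD_2$ requires both $P'$ and $Q'$ to connect to depth 4, we have $k\geq 2$. By definition, $P''$ and $Q''$ are sums of vertices at depth 4 counted with multiplicity equal to the edge counts from $P'$ and $Q'$ respectively. So by Step 1,
$$\dim(P'') + \dim(Q'') \geq k\sqrt{2}.$$

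\emph{Step 3: Compare with the formula.} Using $[3]=\delta^2-1$ and $[5]=[3]^2-[3]-1$, Lemma \ref{lem:D2} gives
$$\dim(P'')+\dim(Q'')=[5]-3[3]-1=\delta^4-6\delta^2+3.$$
This expression, viewed as a function of $\delta^2$, has derivative $2\delta^2-6$, so it is strictly increasing for $\delta^2>3$; hence on our range $\delta^2\in[3+\sqrt 5,6\frac15]$ it attains its maximum at $\delta^2=6.2$, where it equals $38.44-37.2+3=4.24$. Since $3\sqrt 2\approx 4.2426>4.24$, the assumption $k\geq 3$ would give $k\sqrt 2\geq 3\sqrt 2>4.24\geq\dim(P'')+\dim(Q'')$, a contradiction. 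Thus $k\leq 2$, and combined with $k\geq 2$ we conclude $k=2$.

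\emph{Remarks on difficulty.} There is no serious obstacle; the whole argument is a one-page numerical check. The only delicate point is that the numerical inequality $\delta^4-6\delta^2+3<3\sqrt 2$ is extremely tight at $\delta^2=6.2$ (the threshold is $\delta^2=3+\sqrt{6+3\sqrt 2}\approx 6.2004$), which is precisely why the hypothesis is $\delta^2\leq 6\frac15$ rather than something larger — this is the obstacle that forces the classification to stop at index $6\frac15$ referenced in the introduction.
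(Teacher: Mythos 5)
Your proof is correct and is essentially the paper's argument: both rest on Lemma \ref{lem:D2} together with Lemma \ref{lem:Univalent} (via the standard dichotomy that an irreducible bimodule has dimension $1$ or at least $\sqrt{2}$), and both come down to the numerical inequality $[5]-3[3]-1 < 3\sqrt{2}$ at $\delta^2 = 6\tfrac15$. The paper proceeds by a three-step cascade ($N\le 4$, then $N\le 3$, then $N=2$, using the thresholds $5$, $3+\sqrt2$, $3\sqrt2$ in turn), but its own remark observes that only the $3\sqrt{2}$ bound is needed --- which is exactly the shortcut you take, and your observation that the inequality fails just past index $6.2004$ correctly identifies why the classification stops at $6\tfrac15$.
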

\begin{proof}
First, note that if $(q+q^{-1})^2=\delta^2=6 \frac{1}{5}$, then $q\approx 1.98661$.
Suppose there are $N$ edges in total from $P'$ and $Q'$ to vertices at depth 4. Then since each vertex has dimension at least 1,
$$
\dim(P'')+\dim(Q'')=[5]-3[3]-1=\frac{q^8-2 q^6-3 q^4-2 q^2+1}{q^4}\geq N.
$$
First, if $q<2.017$, then
$$
\frac{q^8-2 q^6-3 q^4-2 q^2+1}{q^4} < 5.
$$
Thus $N\leq 4$.
Second, if $q<1.993$, then
$$
\frac{q^8-2 q^6-3 q^4-2 q^2+1}{q^4} < 3+\sqrt{2}.
$$
Hence if $N=4$, all four vertices at depth 4 must have dimension exactly 1, contradicting Lemma \ref{lem:Univalent}.
Thus $N\leq 3$.
Third, if $q < 1.9867$, then
\begin{equation}\label{eqn:FinalInequality}
\frac{q^8-2 q^6-3 q^4-2 q^2+1}{q^4} < 3\sqrt{2}.
\end{equation}
Hence if $N=3$, at least one vertex at depth 4 has dimension 1, contradicting Lemma \ref{lem:Univalent}.
Thus $N=2$.
\end{proof}

\begin{remark}
In fact, all we needed was Inequality \eqref{eqn:FinalInequality}, since $3\sqrt{2}<3+\sqrt{2}<5$.
However, to classify 1-supertransitive subfactors to higher indices in the future, it may be helpful to have these more precise bounds at our disposal.
\end{remark}

\begin{prop}
No graph pairs in $\cD_2$ with $\delta^2\leq 6\frac{1}{5}$ are principal graphs of subfactors.
\end{prop}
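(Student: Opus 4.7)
The plan is to analyse the combinatorial possibilities for how $P'$ and $Q'$ connect to depth $4$ under the very restrictive data of Proposition \ref{prop:Only2Edges}, and to rule each out using dimension arithmetic together with stability and tail-length bounds.

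Since Proposition \ref{prop:Only2Edges} gives exactly two edges between depths $3$ and $4$ of $\Gamma_+$, and since by definition of $\cD_2$ both $P'$ and $Q'$ are connected to depth $4$ while $[R]$ is not, each of $P'$ and $Q'$ must have precisely one simple edge going to a depth-$4$ vertex. Denote those vertices $V_P$ and $V_Q$; the argument splits into the subcases $V_P = V_Q$ (shared neighbour) and $V_P \neq V_Q$ (distinct neighbours).

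In the shared case, equating the two expressions $\dim V = [2]\dim P' - \dim P$ and $\dim V = [2]\dim Q' - \dim Q$ and substituting the formulas of Fact \ref{fact:Dimensions} forces $([2]^2-1)(\dim P - \dim Q) = 0$, hence $\dim P = \dim Q = [3]/2$. The resulting closed form for $\dim V$ is then an explicit polynomial in $[3]$, and the lower bound $\dim V \geq \sqrt{2}$ (using $\dim V \neq 1$ from Lemma \ref{lem:Univalent}) cuts us down to a narrow subrange of $\delta^2$. In this subrange one then computes the ``depth-$5$ deficit'' $[2]\dim V - 2\dim P'$ as a polynomial in $[3]$ and argues dichotomously: either $V$ is terminal, in which case the deficit vanishes and the resulting polynomial equation in $q$ has no root in $(3+\sqrt{5}, 6\tfrac{1}{5}]$; or $V$ has a depth-$5$ neighbour, in which case one extends one more level, invoking the tail bound of Proposition \ref{prop:ShortTails} (tails of length at most $2$ for $1$-supertransitive subfactors) and reaches a contradiction by another dimension/algebraic-integer check.

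In the distinct case, $\Gamma_+$ is stable at depth $3$, so by Proposition \ref{prop:ShortTails} the tails rooted at $P'$ and $Q'$ have lengths $m,n \in \{1,2\}$. Along each tail the recurrence $y_{k+1} = [2]y_k - y_{k-1}$ together with the terminal condition gives $\dim P = C[m{+}2]$ with $C = (2[3]+1)/([2][m{+}3])$ (and analogously for $\dim Q$ with $n$); the constraint $\dim P + \dim Q = [3]$ then becomes a single algebraic equation in $q$ for each of the four pairs $(m,n)$. For each pair one verifies (by hand or in the accompanying Mathematica notebook) that no solution lies in $(3+\sqrt{5}, 6\tfrac{1}{5}]$, or that such a solution forces some new vertex to have dimension strictly less than $1$ or equal to $1$, the latter excluded by Lemma \ref{lem:Univalent}.

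The main obstacle is the case-by-case arithmetic verification in the distinct-neighbour case and the follow-up enumeration one depth further in the sharing case; these computations are routine but tedious, and are best handled by the same odometer/computer-algebra infrastructure already used in Section \ref{sec:main-result}.
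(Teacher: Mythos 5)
Your overall strategy is genuinely different from the paper's: the paper simply runs the odometer on $\cD,\cD'$ to depth $5$, keeps only the outputs in $\cD_2$ with two edges between depths $3$ and $4$ (Figures \ref{fig:TikzTreeD2} and \ref{fig:TikzTreeD2Prime}), and kills the surviving vines and cylinders by dimension and algebraic-integrality tests. Your reduction via Proposition \ref{prop:Only2Edges} to the shared/distinct dichotomy is correct, and the shared-case computation forcing $\dim P=\dim Q=[3]/2$ is a nice observation that the paper does not make explicit. The $(m,n)=(1,1)$ instance of your distinct-case equation does indeed collapse to $\dim V_P+\dim V_Q=1$, contradicting Lemma \ref{lem:Univalent}, so the arithmetic framework is sound where it applies.

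The genuine gap is that you repeatedly assume the graph beyond the last controlled depth is a union of linear tails, and this does not follow from what you have established. In the distinct case you observe that $\Gamma_+$ is stable at depth $3$, but Theorem \ref{thm:PrincipalGraphStability} requires either that \emph{both} $\Gamma_+$ and $\Gamma_-$ be stable at depth $3$, or that $\Gamma_+$ be stable at depths $3$ \emph{and} $4$; all of your information (Lemma \ref{lem:D2}, Proposition \ref{prop:Only2Edges}, the definition of $\cD_2$ itself) is one-sided, about $\Gamma_+$ only, so a priori $\Gamma_-$ could branch at depth $4$, or $V_P$ could have several depth-$5$ neighbours, and your recurrence $y_{k+1}=[2]y_k-y_{k-1}$ with a terminal condition is then simply not the right equation. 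The situation is worse in the shared case: there $\Gamma_+$ is \emph{not} stable at depth $3$ (two depth-$3$ vertices meet the same depth-$4$ vertex), so ``extending one more level'' gives you no control over how many depth-$5$ vertices $V$ has or whether they branch further, and Proposition \ref{prop:ShortTails} only bounds chains of degree-$\le 2$ vertices, which you have not shown exist. Two smaller points: Lemma \ref{lem:Univalent} is stated only for depth-$4$ vertices, so you cannot invoke it to exclude dimension-$1$ vertices at depths $5$ and $6$; and the paper's own use of Corollary \ref{cor:1STCylinder} takes tails of length at most $1$ in the $1$-supertransitive case, not $2$ (your weaker bound is safe but creates extra cases). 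To close the gap you would need to run the depth-$4$ and depth-$5$ dimension bookkeeping on $\Gamma_-$ as well (or bound the total depth-$5$ dimension as in the proof of Proposition \ref{prop:Only2Edges} to force stability of the pair) before the tail recurrences are legitimate; as written, your fallback to ``the accompanying Mathematica notebook'' is essentially conceding the argument back to the paper's computational proof.
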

\begin{proof}
Running the odometer to depth 5 on $\{\cD,\cD'\}$, there are no remaining weeds in $\cD_2$ which have exactly 2 edges in $\Gamma_+$ between depths 3 and 4.
See Figures \ref{fig:TikzTreeD2} and \ref{fig:TikzTreeD2Prime} for the output of running the odometer on $\cD$ and $\cD'$ respectively, and only keeping results which are in $\cD_2$ with exactly 2 edges between depths 3 and 4.
(Recall that the red background denotes an active weed, while a blue background denotes a cylinder.)
The vines and cylinders in $\cD_2$ either have a vertex with dimension less than one or have a vertex whose dimension is not an algebraic integer.
\begin{figure}[!ht]
\begin{tikzpicture}
\tikzset{grow=right,level distance=130pt}
\tikzset{every tree node/.style={draw,fill=white,rectangle,rounded corners,inner sep=2pt}}
\Tree
[.\node{$\!\!\begin{array}{c}\bigraph{bwd1v1p1v1x0p1x1p0x1duals1v1x2}\\\bigraph{bwd1v1p1v1x0p1x1p0x1duals1v1x2}\end{array}\!\!$};
	[.\node{$\!\!\begin{array}{c}\bigraph{bwd1v1p1v1x0p1x1p0x1v1x0x1duals1v1x2v1}\\\bigraph{bwd1v1p1v1x0p1x1p0x1v1x0x1duals1v1x2v1}\end{array}\!\!$};
		[.\node[fill=blue!30]{$\!\!\begin{array}{c}\bigraph{bwd1v1p1v1x0p1x1p0x1v1x0x1v1duals1v1x2v1}\\\bigraph{bwd1v1p1v1x0p1x1p0x1v1x0x1v1duals1v1x2v1}\end{array}\!\!$};]]
	[.\node[fill=blue!30]{$\!\!\begin{array}{c}\bigraph{bwd1v1p1v1x0p1x1p0x1v1x0x0p0x0x1duals1v1x2v1x2}\\\bigraph{bwd1v1p1v1x0p1x1p0x1v1x0x0p0x0x1duals1v1x2v1x2}\end{array}\!\!$};]]
\end{tikzpicture}
\caption{Results in $\cD_2$ with 2 edges between depths 3 and 4 of running the odometer on $\cD$ to depth 5.}
\label{fig:TikzTreeD2}
\end{figure}
\begin{figure}[!ht]
\begin{tikzpicture}
\tikzset{grow=right,level distance=130pt}
\tikzset{every tree node/.style={draw,fill=white,rectangle,rounded corners,inner sep=2pt}}
\Tree
[.\node{$\!\!\begin{array}{c}\bigraph{bwd1v1p1v1x0p1x1p0x1duals1v2x1}\\\bigraph{bwd1v1p1v1x0p1x1p0x1duals1v2x1}\end{array}\!\!$};
	[.\node{$\!\!\begin{array}{c}\bigraph{bwd1v1p1v1x0p1x1p0x1v1x0x1duals1v2x1v1}\\\bigraph{bwd1v1p1v1x0p1x1p0x1v1x0x1duals1v2x1v1}\end{array}\!\!$};
		[.\node[fill=blue!30]{$\!\!\begin{array}{c}\bigraph{bwd1v1p1v1x0p1x1p0x1v1x0x1v1duals1v2x1v1}\\\bigraph{bwd1v1p1v1x0p1x1p0x1v1x0x1v1duals1v2x1v1}\end{array}\!\!$};]]
	[.\node[fill=blue!30]{$\!\!\begin{array}{c}\bigraph{bwd1v1p1v1x0p1x1p0x1v1x0x0p0x0x1duals1v2x1v2x1}\\\bigraph{bwd1v1p1v1x0p1x1p0x1v1x0x0p0x0x1duals1v2x1v2x1}\end{array}\!\!$};]]
\end{tikzpicture}
\caption{Results in $\cD_2$ with 2 edges between depths 3 and 4 of running the odometer on $\cD'$ to depth 5.}
\label{fig:TikzTreeD2Prime}
\end{figure}
\end{proof}

\subsection{Ruling out \texorpdfstring{$\cD_3$}{D_3}}
We now suppose that $(\Gamma_+,\Gamma_-)$ is in $\cD_3$, so $[R]$ connects to a vertex at depth 4 of $\Gamma_+$.

\begin{prop}\label{prop:NoD3}
When $\delta^2\leq 6 \frac{1}{5}$, the only graph pairs in $\cD_3$ which are principal graphs of subfactors are $\cS$ and $\cS'$.
\end{prop}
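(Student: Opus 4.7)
The plan is to proceed in direct analogy with the $\cD_2$ case: run the principal graph odometer of \cite{MR2914056} on $\{\cD, \cD'\}$, restrict to branches in $\cD_3$ (those in which $[R]$ acquires at least one neighbour at depth $4$), and show that the only surviving principal graphs in the index range $\delta^2 \in [3+\sqrt{5},\,6\tfrac{1}{5}]$ are $\cS$ and $\cS'$.

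The first substantive step is a dimension bound on the edges between depths $3$ and $4$, analogous to Proposition \ref{prop:Only2Edges}. Writing $R''$, $P''$, $Q''$ for the formal sums (with multiplicity) of the depth-$4$ neighbours of $[R]$, $P'$, $Q'$, the Perron-Frobenius identities at the depth-$2$ and depth-$3$ vertices, combined with $\dim(P)+\dim(Q)=[3]$, express $\dim(R'')+\dim(P'')+\dim(Q'')$ as an explicit rational function of $q$. Together with the lower bound $\dim(v)\ge 1$ at every vertex, Lemma \ref{lem:Automorphism} (no non-univalent dimension-$1$ vertex at depth $4$), and the $\cD_3$ hypothesis $\dim(R'')\ge 1$, this caps the total number of edges from depth $3$ to depth $4$ to a small integer throughout our index range.

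With this cap in hand, I would run the odometer on $\{\cD, \cD'\}$ one or two further depths, keeping only branches consistent with the $\cD_3$ hypothesis and the edge bound, producing trees parallel to Figures \ref{fig:TikzTreeD2} and \ref{fig:TikzTreeD2Prime}. Each surviving weed, vine, or cylinder not on the branch leading to $\cS, \cS'$ is dismissed by one of the standard tools: a vertex with Perron-Frobenius dimension strictly below $1$, a dimension that fails to be an algebraic integer, or a tail of length exceeding the supertransitivity $1$ (Proposition \ref{prop:ShortTails} and Corollary \ref{cor:1STCylinder}). On the $\cS, \cS'$ branch, a direct Perron-Frobenius computation forces the single depth-$4$ vertex $X$ attached to $[R]$ to be univalent with $\dim(X)=1$; the underlying graph symmetry exchanging $(P,P')$ with $(Q,Q')$ then forces $\dim(P)=\dim(Q)$, which in turn pins down $\delta^2 = 3+2\sqrt{2}$, confirming the index lies in the required range.

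The main obstacle is organisational rather than mathematical: the $\cD_3$ branch of the odometer produces many weeds at each stage, each of which has to be dismissed individually. This enumeration is automated in the accompanying {\tt Mathematica} notebook {\tt 1STBelow6Enumerator.nb}; the written proof would present the pruned odometer trees together with the short algebraic justifications ruling out each non-$\cS, \cS'$ candidate.
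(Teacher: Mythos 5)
Your plan is structurally the paper's proof --- odometer on $\{\cD,\cD'\}$ restricted to $\cD_3$, followed by dimension arguments --- but two points need attention. First, the preliminary dimension identity is misstated: since $[R]$ is adjacent to both $P$ and $Q$, summing the Perron--Frobenius relations at $P$, $Q$, $P'$, $Q'$ and $[R]$ gives
$$2\dim(R'')+\dim(P'')+\dim(Q'')=[5]-3[3]-1,$$
and the unweighted sum $\dim(R'')+\dim(P'')+\dim(Q'')$ is \emph{not} a function of $q$ alone. Relatedly, you must not import the formula $\dim([R])=[3]/[2]$ from Fact \ref{fact:Dimensions} here: that formula is the Perron--Frobenius relation at $[R]$ under the assumption that $[R]$ has no depth-4 neighbours, so it fails throughout $\cD_3$ (for $\cS$ one has $\dim([R])=1+\sqrt2$, not $[3]/[2]=2$); using it would ``prove'' that $\cD_3$ is empty. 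With the corrected weighted identity your edge cap does go through (the right-hand side is below $3\sqrt2$ for $q<1.9867$, which together with $\dim(R'')\geq 1$ forces at most $3$ edges between depths $3$ and $4$), and this is a legitimate pruning device that the paper does not use --- it simply runs the odometer to depth $6$ and displays the results in Figures \ref{fig:TikzTreeD3} and \ref{fig:TikzTreeD3Prime}.

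Second, the decisive step is hidden inside your phrase ``standard tools.'' The survivors of the enumeration include weeds, for which $q$ remains a free parameter over the whole interval, so ``a vertex of Perron--Frobenius dimension strictly below $1$'' must mean an explicit relative dimension, valid for the weed and every extension of it, shown to be below $1$ on the entire range of $q$. This is precisely how the paper finishes: each residual depth-6 weed has a univalent depth-4 vertex \emph{not} attached to $[R]$ whose relative dimension $d(q)$ stays below $1$ until $q\approx 2.032$ (index $\approx 6.37$). Your outline is correct in spirit and your extra pruning lemma is sound once repaired, but without exhibiting such a vertex and such a function for each surviving weed the argument remains a plan rather than a proof.
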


\begin{proof}
Running the odometer on $\{\cD,\cD'\}$ to depth 6, the vines and cylinders in $\cD_3$ either have a vertex with dimension less than one or have a vertex whose dimension is not an algebraic integer, or are exactly $\cS,\cS'$. The weeds in $\cD_3$ are given in Figures \ref{fig:TikzTreeD3} and \ref{fig:TikzTreeD3Prime}.

\begin{figure}[!ht]
\resizebox{!}{10cm}{
\begin{tikzpicture}
\tikzset{grow=right,level distance=130pt}
\tikzset{every tree node/.style={draw,fill=white,rectangle,rounded corners,inner sep=2pt}}
\Tree
[.\node{$\!\!\begin{array}{c}\bigraph{bwd1v1p1v1x0p1x1p0x1duals1v1x2}\\\bigraph{bwd1v1p1v1x0p1x1p0x1duals1v1x2}\end{array}\!\!$};
	[.\node[fill=blue!30]{$\!\!\begin{array}{c}\bigraph{bwd1v1p1v1x0p1x1p0x1v0x1x0duals1v1x2v1}\\\bigraph{bwd1v1p1v1x0p1x1p0x1v0x1x0duals1v1x2v1}\end{array}\!\!$};]
	[.\node[fill=blue!30]{$\!\!\begin{array}{c}\bigraph{bwd1v1p1v1x0p1x1p0x1v1x0x0p0x1x0duals1v1x2v1x2}\\\bigraph{bwd1v1p1v1x0p1x1p0x1v1x0x0p0x1x0duals1v1x2v1x2}\end{array}\!\!$};]
	[.\node{$\!\!\begin{array}{c}\bigraph{bwd1v1p1v1x0p1x1p0x1v0x1x0p0x0x1p0x0x1duals1v1x2v1x3x2}\\\bigraph{bwd1v1p1v1x0p1x1p0x1v0x1x0p0x0x1p0x0x1duals1v1x2v1x3x2}\end{array}\!\!$};]
	[.\node{$\!\!\begin{array}{c}\bigraph{bwd1v1p1v1x0p1x1p0x1v1x0x0p1x0x0p0x1x0duals1v1x2v1x2x3}\\\bigraph{bwd1v1p1v1x0p1x1p0x1v1x0x0p1x0x0p0x1x0duals1v1x2v2x1x3}\end{array}\!\!$};
		[.\node{$\!\!\begin{array}{c}\bigraph{bwd1v1p1v1x0p1x1p0x1v1x0x0p1x0x0p0x1x0v1x0x0p1x0x0duals1v1x2v1x2x3}\\\bigraph{bwd1v1p1v1x0p1x1p0x1v1x0x0p1x0x0p0x1x0v1x0x0p0x1x0duals1v1x2v2x1x3}\end{array}\!\!$};
			[.\node[fill=red!30]{$\!\!\begin{array}{c}\bigraph{bwd1v1p1v1x0p1x1p0x1v1x0x0p1x0x0p0x1x0v1x0x0p0x1x0v1x1duals1v1x2v2x1x3v1}\\\bigraph{bwd1v1p1v1x0p1x1p0x1v1x0x0p1x0x0p0x1x0v1x0x0p1x0x0v1x0p0x1duals1v1x2v1x2x3v2x1}\end{array}\!\!$};]
			[.\node[fill=red!30]{$\!\!\begin{array}{c}\bigraph{bwd1v1p1v1x0p1x1p0x1v1x0x0p1x0x0p0x1x0v1x0x0p1x0x0v1x0p0x1duals1v1x2v1x2x3v2x1}\\\bigraph{bwd1v1p1v1x0p1x1p0x1v1x0x0p1x0x0p0x1x0v1x0x0p0x1x0v1x0p1x0p0x1p0x1duals1v1x2v2x1x3v1x3x2x4}\end{array}\!\!$};]]]
	[.\node[fill=blue!30]{$\!\!\begin{array}{c}\bigraph{bwd1v1p1v1x0p1x1p0x1v1x0x0p0x1x0p0x0x1duals1v1x2v1x2x3}\\\bigraph{bwd1v1p1v1x0p1x1p0x1v1x0x0p0x1x0p0x0x1duals1v1x2v1x2x3}\end{array}\!\!$};]
	[.\node{$\!\!\begin{array}{c}\bigraph{bwd1v1p1v1x0p1x1p0x1v0x1x0p0x0x1p0x0x1duals1v1x2v1x2x3}\\\bigraph{bwd1v1p1v1x0p1x1p0x1v0x1x0p0x0x1p0x0x1duals1v1x2v1x2x3}\end{array}\!\!$};
		[.\node{$\!\!\begin{array}{c}\bigraph{bwd1v1p1v1x0p1x1p0x1v0x1x0p0x0x1p0x0x1v0x1x0p0x0x1duals1v1x2v1x2x3}\\\bigraph{bwd1v1p1v1x0p1x1p0x1v0x1x0p0x0x1p0x0x1v0x0x1p0x0x1duals1v1x2v1x2x3}\end{array}\!\!$};
			[.\node[fill=red!30]{$\!\!\begin{array}{c}\bigraph{bwd1v1p1v1x0p1x1p0x1v0x1x0p0x0x1p0x0x1v0x1x0p0x0x1v1x1duals1v1x2v1x2x3v1}\\\bigraph{bwd1v1p1v1x0p1x1p0x1v0x1x0p0x0x1p0x0x1v0x0x1p0x0x1v1x0p0x1duals1v1x2v1x2x3v1x2}\end{array}\!\!$};]
			[.\node[fill=red!30]{$\!\!\begin{array}{c}\bigraph{bwd1v1p1v1x0p1x1p0x1v0x1x0p0x0x1p0x0x1v0x0x1p0x0x1v1x0duals1v1x2v1x2x3v1}\\\bigraph{bwd1v1p1v1x0p1x1p0x1v0x1x0p0x0x1p0x0x1v0x1x0p0x0x1v1x0p1x0p0x1duals1v1x2v1x2x3v1x3x2}\end{array}\!\!$};]
			[.\node[fill=red!30]{$\!\!\begin{array}{c}\bigraph{bwd1v1p1v1x0p1x1p0x1v0x1x0p0x0x1p0x0x1v0x0x1p0x0x1v1x0p1x0duals1v1x2v1x2x3v2x1}\\\bigraph{bwd1v1p1v1x0p1x1p0x1v0x1x0p0x0x1p0x0x1v0x1x0p0x0x1v1x0p1x0p1x0p0x1duals1v1x2v1x2x3v2x1x4x3}\end{array}\!\!$};]
			[.\node[fill=red!30]{$\!\!\begin{array}{c}\bigraph{bwd1v1p1v1x0p1x1p0x1v0x1x0p0x0x1p0x0x1v0x0x1p0x0x1v0x1p0x1duals1v1x2v1x2x3v2x1}\\\bigraph{bwd1v1p1v1x0p1x1p0x1v0x1x0p0x0x1p0x0x1v0x1x0p0x0x1v1x0p0x1p0x1p0x1duals1v1x2v1x2x3v2x1x3x4}\end{array}\!\!$};]
			[.\node[fill=red!30]{$\!\!\begin{array}{c}\bigraph{bwd1v1p1v1x0p1x1p0x1v0x1x0p0x0x1p0x0x1v0x0x1p0x0x1v1x0p0x1duals1v1x2v1x2x3v1x2}\\\bigraph{bwd1v1p1v1x0p1x1p0x1v0x1x0p0x0x1p0x0x1v0x1x0p0x0x1v1x0p1x0p0x1p0x1duals1v1x2v1x2x3v1x3x2x4}\end{array}\!\!$};]
			[.\node[fill=red!30]{$\!\!\begin{array}{c}\bigraph{bwd1v1p1v1x0p1x1p0x1v0x1x0p0x0x1p0x0x1v0x0x1p0x0x1v1x0p1x0duals1v1x2v1x2x3v1x2}\\\bigraph{bwd1v1p1v1x0p1x1p0x1v0x1x0p0x0x1p0x0x1v0x1x0p0x0x1v1x0p1x0p1x0p0x1duals1v1x2v1x2x3v2x1x4x3}\end{array}\!\!$};]
			[.\node[fill=red!30]{$\!\!\begin{array}{c}\bigraph{bwd1v1p1v1x0p1x1p0x1v0x1x0p0x0x1p0x0x1v0x0x1p0x0x1v0x1p0x1duals1v1x2v1x2x3v1x2}\\\bigraph{bwd1v1p1v1x0p1x1p0x1v0x1x0p0x0x1p0x0x1v0x1x0p0x0x1v1x0p0x1p0x1p0x1duals1v1x2v1x2x3v2x1x3x4}\end{array}\!\!$};]]
		[.\node{$\!\!\begin{array}{c}\bigraph{bwd1v1p1v1x0p1x1p0x1v0x1x0p0x0x1p0x0x1v0x1x0p0x1x0p0x0x1duals1v1x2v1x2x3}\\\bigraph{bwd1v1p1v1x0p1x1p0x1v0x1x0p0x0x1p0x0x1v0x1x0p0x0x1p0x0x1duals1v1x2v1x2x3}\end{array}\!\!$};]]
	[.\node{$\!\!\begin{array}{c}\bigraph{bwd1v1p1v1x0p1x1p0x1v1x0x0p0x1x0p0x0x1p0x0x1duals1v1x2v1x2x4x3}\\\bigraph{bwd1v1p1v1x0p1x1p0x1v1x0x0p0x1x0p0x0x1p0x0x1duals1v1x2v1x2x4x3}\end{array}\!\!$};]
	[.\node{$\!\!\begin{array}{c}\bigraph{bwd1v1p1v1x0p1x1p0x1v1x0x0p0x1x0p0x0x1p0x0x1duals1v1x2v1x2x3x4}\\\bigraph{bwd1v1p1v1x0p1x1p0x1v1x0x0p0x1x0p0x0x1p0x0x1duals1v1x2v1x2x4x3}\end{array}\!\!$};]
	[.\node{$\!\!\begin{array}{c}\bigraph{bwd1v1p1v1x0p1x1p0x1v1x0x0p1x0x0p0x1x0p0x0x1duals1v1x2v1x2x3x4}\\\bigraph{bwd1v1p1v1x0p1x1p0x1v1x0x0p1x0x0p0x1x0p0x0x1duals1v1x2v1x2x3x4}\end{array}\!\!$};]]
\end{tikzpicture}
}
\caption{Results in $\cD_3$ of running the odometer on $\cD$ to depth 6.}
\label{fig:TikzTreeD3}
\end{figure}

\begin{figure}[!ht]
\begin{tikzpicture}
\tikzset{grow=right,level distance=130pt}
\tikzset{every tree node/.style={draw,fill=white,rectangle,rounded corners,inner sep=2pt}}
\Tree
[.\node{$\!\!\begin{array}{c}\bigraph{bwd1v1p1v1x0p1x1p0x1duals1v2x1}\\\bigraph{bwd1v1p1v1x0p1x1p0x1duals1v2x1}\end{array}\!\!$};
	[.\node[fill=blue!30]{$\!\!\begin{array}{c}\bigraph{bwd1v1p1v1x0p1x1p0x1v0x1x0duals1v2x1v1}\\\bigraph{bwd1v1p1v1x0p1x1p0x1v0x1x0duals1v2x1v1}\end{array}\!\!$};]
	[.\node[fill=blue!30]{$\!\!\begin{array}{c}\bigraph{bwd1v1p1v1x0p1x1p0x1v1x0x0p0x1x0p0x0x1duals1v2x1v3x2x1}\\\bigraph{bwd1v1p1v1x0p1x1p0x1v1x0x0p0x1x0p0x0x1duals1v2x1v3x2x1}\end{array}\!\!$};]]
\end{tikzpicture}
\caption{Results in $\cD_3$ of running the odometer on $\cD'$ to depth 6.}
\label{fig:TikzTreeD3Prime}
\end{figure}

For all the remaining weeds, there is a univalent vertex at depth 4 \underline{not} connected to $[R]$ on one of $\Gamma_\pm$.
Its relative dimension is given by
$$
d(q)=\frac{q^{16}-q^{14}-5 q^{12}-6 q^{10}+6 q^6+5 q^4+q^2-1}{q^2 \left(q^4-1\right) \left(q^4+q^2+1\right)^2}.
$$
Now $d(q)<1$ until $q\approx 2.03228$, which corresponds to index $6.37228$.
\end{proof}

\section{Eliminating \texorpdfstring{$\cK,\cK'$}{K, K'}}\label{sec:Kangaroo}

In this section we show that there are no principal graphs which are extensions of the graphs $\cK$ or $\cK'$ with index at most $6 \frac{1}{5}$, where
\begin{align*}
\cK & = \left(\bigraph{bwd1v1p1v1x1p0x1p0x1duals1v1x2},\bigraph{bwd1v1p1v1x1p0x1p0x1duals1v1x2}\right) &
\cK' & = \left(\bigraph{bwd1v1p1v1x0p1x1p0x1duals1v1x2},\bigraph{bwd1v1p1v0x1p1x1p0x1duals1v1x2}\right).
\end{align*}

\begin{prop}\label{prop:NoK}
When $\delta^2\leq 6 \frac{1}{5}$, no extensions of $\cK,\cK'$ are principal graphs of subfactors.
\end{prop}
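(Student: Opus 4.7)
The plan is to mirror the approach used for $\cD,\cD'$ in Section \ref{sec:Dart}, adapted to the local structure of $\cK$ and $\cK'$. Label the depth-$2$ vertices of $\Gamma_+$ as $P, Q$, where $P$ is the vertex connected to only one vertex at depth $3$; call that common depth-$3$ vertex $R_1$, and call the remaining depth-$3$ vertices (each connected only to $Q$) $R_2$ and $R_3$. The duals data forces $P$ and $Q$ to be self-dual in both $\cK$ and $\cK'$, so in general $\dim(P)\neq \dim(Q)$. The Perron--Frobenius equations give
\[
\dim(P)+\dim(Q)=[3],\qquad \dim(R_1)=[2]\dim(P)-[2],
\]
\[
\dim(R_2)+\dim(R_3)=[2](\dim(Q)-\dim(P)).
\]
Requiring each of $\dim(R_1),\dim(R_2),\dim(R_3)\geq 1$ already traps $\dim(P),\dim(Q)$ in narrow subintervals of $[1,[3]-1]$ as a function of $\delta$, in a way wholly analogous to the dimension bookkeeping around Fact \ref{fact:Dimensions} and Lemmas \ref{lem:Univalent}--\ref{lem:D2}.

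Next, I would run the principal graph odometer on the seed $\{\cK,\cK'\}$ (using the \texttt{FusionAtlas} package and the \texttt{Mathematica} notebook \texttt{1STBelow6Enumerator.nb} already cited in the proof of Theorem \ref{thm:MainHelper}), stopping at a depth $\leq 6$ at which every remaining active weed either becomes a cylinder or is eliminated. By Proposition \ref{prop:ShortTails} and Corollary \ref{cor:1STCylinder}, once a weed becomes a cylinder in the 1-supertransitive setting no further extensions are possible, so every such cylinder may be treated as a vine. For each resulting vine, I would then check that at least one of the following obstructions occurs: (i) some vertex has FP-dimension strictly less than $1$ in the whole range $(3+\sqrt{5},6\tfrac{1}{5}]$; (ii) some vertex has FP-dimension that is not an algebraic integer; (iii) some vertex satisfies the hypotheses of the weak virtual-normalizer theorem from Section \ref{sec:Thickness}, forcing an intermediate subfactor of the wrong index; or (iv) a tail would be longer than the initial arm, contradicting Proposition \ref{prop:ShortTails}.

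The main obstacle will be the handful of cylinders and vines that survive the crude bounds. As was the case for $\cD_3$ in the proof of Proposition \ref{prop:NoD3}, I expect that each surviving candidate has a univalent vertex --- attached to $R_2$, $R_3$, or $R_1$ at depth $4$ --- whose relative dimension is an explicit rational function $d(q)$ of $q$, and one then needs to verify that $d(q)<1$ throughout the relevant range of $q$ (equivalently, $\delta^2\leq 6\tfrac15$). The required inequalities, being monotone in $q$ on the interval in question, reduce to a finite check of the root of $d(q)-1$ as in the final paragraph of the proof of Proposition \ref{prop:NoD3}. The output of the odometer, together with these checks, constitutes the full proof, and the verification is naturally packaged into the same accompanying \texttt{Mathematica} notebook as the earlier enumerations.
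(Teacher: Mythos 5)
Your overall strategy --- run the odometer on $\{\cK,\cK'\}$, discard cylinders using Proposition \ref{prop:ShortTails} and Corollary \ref{cor:1STCylinder}, kill the vines by dimension tests, and finish off the surviving weeds by an explicit relative-dimension computation --- is exactly the paper's strategy, and your Perron--Frobenius bookkeeping at depths $2$ and $3$ is correct. However, as written the plan has two concrete gaps, and it would not close without the following additional ideas.

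First, your list of obstructions for the surviving weeds is incomplete in a way that matters. For the non-cylinder weeds over $\cK$, the distinguished depth-$4$ vertex has relative dimension $d(q)=\frac{q^{10}-1}{q(q^2-1)(q^2+1)^3}$, which satisfies $1<d(q)<\sqrt{2}$ throughout the relevant range of $q$; in particular your proposed check ``$d(q)<1$'' \emph{fails} here, and these weeds are not eliminated by any of your obstructions (i)--(iv). What is needed is the observation (already used in Lemmas \ref{lem:Automorphism} and \ref{lem:Univalent}) that no vertex can have dimension strictly between $1$ and $\sqrt{2}$: dimension $<\sqrt{2}$ forces dimension $1$, hence univalence. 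You need to add this to your toolkit or the $\cK$ branch does not terminate.

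Second, for $\cK'$ you miss the key pruning step. One of the two principal graphs of $\cK'$ coincides with $\Gamma_+$ of $\cD$, so the results of Section \ref{sec:Dart} that depend only on that graph --- in particular Proposition \ref{prop:Only2Edges}, forcing exactly $2$ edges between depths $3$ and $4$ --- apply verbatim to extensions of $\cK'$. Without this restriction the odometer on $\cK'$ produces a substantially larger tree, and it is not clear that running to depth $6$ leaves only candidates with an easily-excluded univalent vertex. (Relatedly, your uniform description of the depth-$2$/depth-$3$ adjacency, with one bivalent and one quadrivalent depth-$2$ vertex, describes $\cK$ but only one of the two graphs of $\cK'$; the other is the $\cD$-type graph, which is precisely why the transfer of Proposition \ref{prop:Only2Edges} is available.) With these two additions your outline matches the paper's proof.
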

\begin{proof}
First, we run the odometer to depth 5 on $\cK$, removing the cylinders along the way.
The output is given in Figure \ref{fig:TikzTreeK}.

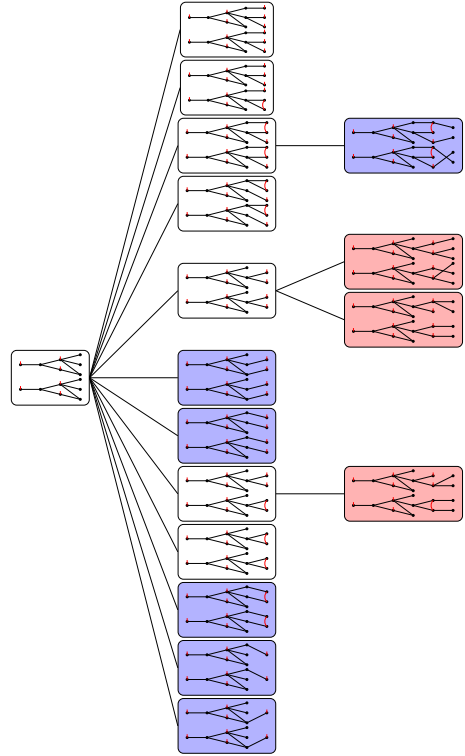
\begin{figure}[!ht]
\resizebox{!}{10cm}{
\begin{tikzpicture}
\tikzset{grow=right,level distance=130pt}
\tikzset{every tree node/.style={draw,fill=white,rectangle,rounded corners,inner sep=2pt}}
\Tree
[.\node{$\!\!\begin{array}{c}\bigraph{bwd1v1p1v1x1p0x1p0x1duals1v1x2}\\\bigraph{bwd1v1p1v1x1p0x1p0x1duals1v1x2}\end{array}\!\!$};
	[.\node[fill=blue!30]{$\!\!\begin{array}{c}\bigraph{bwd1v1p1v1x1p0x1p0x1v1x0x0duals1v1x2v1}\\\bigraph{bwd1v1p1v1x1p0x1p0x1v1x0x0duals1v1x2v1}\end{array}\!\!$};]
	[.\node[fill=blue!30]{$\!\!\begin{array}{c}\bigraph{bwd1v1p1v1x1p0x1p0x1v0x0x1duals1v1x2v1}\\\bigraph{bwd1v1p1v1x1p0x1p0x1v0x0x1duals1v1x2v1}\end{array}\!\!$};]
	[.\node[fill=blue!30]{$\!\!\begin{array}{c}\bigraph{bwd1v1p1v1x1p0x1p0x1v0x1x0p0x0x1duals1v1x2v2x1}\\\bigraph{bwd1v1p1v1x1p0x1p0x1v0x1x0p0x0x1duals1v1x2v2x1}\end{array}\!\!$};]
	[.\node{$\!\!\begin{array}{c}\bigraph{bwd1v1p1v1x1p0x1p0x1v0x1x0p0x1x0duals1v1x2v2x1}\\\bigraph{bwd1v1p1v1x1p0x1p0x1v0x1x0p0x1x0duals1v1x2v2x1}\end{array}\!\!$};]
	[.\node{$\!\!\begin{array}{c}\bigraph{bwd1v1p1v1x1p0x1p0x1v0x1x0p0x1x0duals1v1x2v1x2}\\\bigraph{bwd1v1p1v1x1p0x1p0x1v0x1x0p0x1x0duals1v1x2v2x1}\end{array}\!\!$};
		[.\node[fill=red!30]{$\!\!\begin{array}{c}\bigraph{bwd1v1p1v1x1p0x1p0x1v0x1x0p0x1x0v1x0p1x0duals1v1x2v1x2}\\\bigraph{bwd1v1p1v1x1p0x1p0x1v0x1x0p0x1x0v1x0p0x1duals1v1x2v2x1}\end{array}\!\!$};]]
	[.\node[fill=blue!30]{$\!\!\begin{array}{c}\bigraph{bwd1v1p1v1x1p0x1p0x1v0x1x0p0x0x1duals1v1x2v1x2}\\\bigraph{bwd1v1p1v1x1p0x1p0x1v0x1x0p0x0x1duals1v1x2v1x2}\end{array}\!\!$};]
	[.\node[fill=blue!30]{$\!\!\begin{array}{c}\bigraph{bwd1v1p1v1x1p0x1p0x1v1x0x0p0x1x0duals1v1x2v1x2}\\\bigraph{bwd1v1p1v1x1p0x1p0x1v1x0x0p0x1x0duals1v1x2v1x2}\end{array}\!\!$};]
	[.\node{$\!\!\begin{array}{c}\bigraph{bwd1v1p1v1x1p0x1p0x1v0x1x0p0x1x0duals1v1x2v1x2}\\\bigraph{bwd1v1p1v1x1p0x1p0x1v0x1x0p0x1x0duals1v1x2v1x2}\end{array}\!\!$};
		[.\node[fill=red!30]{$\!\!\begin{array}{c}\bigraph{bwd1v1p1v1x1p0x1p0x1v0x1x0p0x1x0v0x1p0x1duals1v1x2v1x2}\\\bigraph{bwd1v1p1v1x1p0x1p0x1v0x1x0p0x1x0v1x0p0x1duals1v1x2v1x2}\end{array}\!\!$};]
		[.\node[fill=red!30]{$\!\!\begin{array}{c}\bigraph{bwd1v1p1v1x1p0x1p0x1v0x1x0p0x1x0v1x0p1x0p0x1duals1v1x2v1x2}\\\bigraph{bwd1v1p1v1x1p0x1p0x1v0x1x0p0x1x0v1x0p0x1p1x0duals1v1x2v1x2}\end{array}\!\!$};]]
	[.\node{$\!\!\begin{array}{c}\bigraph{bwd1v1p1v1x1p0x1p0x1v0x1x0p0x0x1p0x0x1duals1v1x2v1x3x2}\\\bigraph{bwd1v1p1v1x1p0x1p0x1v0x1x0p0x0x1p0x0x1duals1v1x2v1x3x2}\end{array}\!\!$};]
	[.\node{$\!\!\begin{array}{c}\bigraph{bwd1v1p1v1x1p0x1p0x1v0x1x0p0x1x0p0x0x1duals1v1x2v1x3x2}\\\bigraph{bwd1v1p1v1x1p0x1p0x1v0x1x0p0x1x0p0x0x1duals1v1x2v1x3x2}\end{array}\!\!$};
		[.\node[fill=blue!30]{$\!\!\begin{array}{c}\bigraph{bwd1v1p1v1x1p0x1p0x1v0x1x0p0x1x0p0x0x1v1x0x0p0x0x1duals1v1x2v1x3x2}\\\bigraph{bwd1v1p1v1x1p0x1p0x1v0x1x0p0x1x0p0x0x1v0x0x1p1x0x0duals1v1x2v1x3x2}\end{array}\!\!$};]]
	[.\node{$\!\!\begin{array}{c}\bigraph{bwd1v1p1v1x1p0x1p0x1v0x1x0p0x1x0p0x0x1duals1v1x2v1x2x3}\\\bigraph{bwd1v1p1v1x1p0x1p0x1v0x1x0p0x1x0p0x0x1duals1v1x2v2x1x3}\end{array}\!\!$};]
	[.\node{$\!\!\begin{array}{c}\bigraph{bwd1v1p1v1x1p0x1p0x1v0x1x0p0x1x0p0x0x1duals1v1x2v1x2x3}\\\bigraph{bwd1v1p1v1x1p0x1p0x1v0x1x0p0x1x0p0x0x1duals1v1x2v1x2x3}\end{array}\!\!$};]]
\end{tikzpicture}
}
\caption{Results  of running the odometer on $\cK$ to depth 5.}
\label{fig:TikzTreeK}
\end{figure}

For the weeds which are extensions of $\cK$ which are not cylinders, the top vertex at depth 4 has relative dimension
$$
d(q)=\frac{q^{10}-1}{q \left(q^2-1\right) \left(q^2+1\right)^3}.
$$
Note that
\begin{itemize}
\item
if $(q+q^{-1})^2=\delta^2=3+\sqrt{5}$, then $q\approx 1.70002$, and
\item
if $(q+q^{-1})^2=\delta^2=6 \frac{1}{5}$, then $q\approx 1.98661$.
\end{itemize}
However, for $1.636 < q< 2.047$, $1< d(q) < \sqrt{2}$, which is impossible.

Note that the $\Gamma_+$ of $\cK'$ is the same as the $\Gamma_+$ of $\cD$.
Hence all the results of Section \ref{sec:Dart} which only relied on $\Gamma_+$ apply to these weeds.
In particular, Proposition \ref{prop:Only2Edges} applies, so there must be exactly 2 edges between depths 3 and 4 on any extension.

Thus we run the odometer to depth 6 on $\cK'$, removing cylinders along the way, and keeping only the extensions of $\cK'$ for which there are exactly 2 edges between depths 3 and 4 on the graph which truncates to $\Gamma_+$ of $\cD$.
The output is given in Figure \ref{fig:TikzTreeKPrime}.

\begin{figure}[!ht]
\resizebox{!}{5cm}{
\begin{tikzpicture}
\tikzset{grow=right,level distance=130pt}
\tikzset{every tree node/.style={draw,fill=white,rectangle,rounded corners,inner sep=2pt}}
\Tree
[.\node{$\!\!\begin{array}{c}\bigraph{bwd1v1p1v1x0p1x1p0x1duals1v1x2}\\\bigraph{bwd1v1p1v0x1p1x1p0x1duals1v1x2}\end{array}\!\!$};
	[.\node{$\!\!\begin{array}{c}\bigraph{bwd1v1p1v1x0p1x1p0x1v1x0x1duals1v1x2v1}\\\bigraph{bwd1v1p1v0x1p1x1p0x1v1x0x0p0x0x1duals1v1x2v1x2}\end{array}\!\!$};
		[.\node{$\!\!\begin{array}{c}\bigraph{bwd1v1p1v1x0p1x1p0x1v1x0x1v1duals1v1x2v1}\\\bigraph{bwd1v1p1v0x1p1x1p0x1v1x0x0p0x0x1v1x1duals1v1x2v1x2}\end{array}\!\!$};
			[.\node[fill=red!30]{$\!\!\begin{array}{c}\bigraph{bwd1v1p1v0x1p1x1p0x1v1x0x0p0x0x1v1x1v1duals1v1x2v1x2v1}\\\bigraph{bwd1v1p1v1x0p1x1p0x1v1x0x1v1v1p1duals1v1x2v1v2x1}\end{array}\!\!$};]
			[.\node[fill=red!30]{$\!\!\begin{array}{c}\bigraph{bwd1v1p1v0x1p1x1p0x1v1x0x0p0x0x1v1x1v1duals1v1x2v1x2v1}\\\bigraph{bwd1v1p1v1x0p1x1p0x1v1x0x1v1v1p1duals1v1x2v1v1x2}\end{array}\!\!$};]]]
	[.\node[fill=blue!30]{$\!\!\begin{array}{c}\bigraph{bwd1v1p1v0x1p1x1p0x1v0x1x0duals1v1x2v1}\\\bigraph{bwd1v1p1v1x0p1x1p0x1v1x0x0p0x1x0p0x0x1duals1v1x2v3x2x1}\end{array}\!\!$};]
	[.\node{$\!\!\begin{array}{c}\bigraph{bwd1v1p1v0x1p1x1p0x1v1x0x0p0x1x0duals1v1x2v1x2}\\\bigraph{bwd1v1p1v1x0p1x1p0x1v1x0x0p1x0x0p0x1x0p0x0x1duals1v1x2v1x4x3x2}\end{array}\!\!$};
		[.\node[fill=blue!30]{$\!\!\begin{array}{c}\bigraph{bwd1v1p1v0x1p1x1p0x1v1x0x0p0x1x0v1x0duals1v1x2v1x2}\\\bigraph{bwd1v1p1v1x0p1x1p0x1v1x0x0p1x0x0p0x1x0p0x0x1v0x0x0x1duals1v1x2v1x4x3x2}\end{array}\!\!$};]
		[.\node{$\!\!\begin{array}{c}\bigraph{bwd1v1p1v0x1p1x1p0x1v1x0x0p0x1x0v1x0p1x0duals1v1x2v1x2}\\\bigraph{bwd1v1p1v1x0p1x1p0x1v1x0x0p1x0x0p0x1x0p0x0x1v1x0x0x0p0x0x0x1duals1v1x2v1x4x3x2}\end{array}\!\!$};]]]
\end{tikzpicture}
}
\caption{Results of running the odometer on $\cK'$ to depth 6.}
\label{fig:TikzTreeKPrime}
\end{figure}

Now both of the two remaining weeds have the same $\Gamma_+$.
The vertex at depth 6 of this $\Gamma_+$ has relative dimension
$$
\frac{q^{18}-3 q^{16}-3 q^{14}-q^{12}+2 q^{10}-2 q^8+q^6+3 q^4+3 q^2-1}{2 q^6 \left(q^2-1\right) \left(q^2+1\right)^2},
$$
which is less than 1 for $q<1.987$.

The vines and cylinders from both $\cK$ and $\cK'$ all either have a vertex with dimension less than 1, or have a vertex whose dimension is not an algebraic integer.
\end{proof}

\appendix
\section{The proofs of Lemmas \ref{lem:OneParameterPlus} and \ref{lem:OneParameterMinus}}\label{sec:Appendix}

Let $\Gamma$ be the underlying graph of the principal graphs of both $\cS,\cS'$.
We label the vertices of $\Gamma$ as follows:
$$
\Gamma=
\begin{tikzpicture}[baseline=-.1cm]
\draw[fill] (0,0) circle (0.05) node[below]{$0$};
\draw (0.,0.) -- (1.,0.);
\draw[fill] (1.,0.) circle (0.05) node[right]{$7$};
\draw (1.,0.) -- (2.,-1);
\draw (1.,0.) -- (2.,1);
\draw[fill] (2.,-1) circle (0.05) node[above]{$4$};
\draw[fill] (2.,1) circle (0.05) node[below]{$6$};
\draw (2.,-1) -- (2.,-2);
\draw (2.,-1) -- (3.,0);
\draw (2.,1) -- (3.,0);
\draw (2.,1) -- (2.,2);
\draw[fill] (2.,-2) circle (0.05) node[right]{$3$};
\draw[fill] (3.,0.) circle (0.05) node[left]{$5$};
\draw[fill] (2.,2) circle (0.05) node[right]{$1$};
\draw (3.,0) -- (4.,0);
\draw[fill] (4.,0) circle (0.05) node [right]{$2$};
\end{tikzpicture}.
$$
Let $\cG_\bullet$ denote the bipartite graph planar algebra of $\Gamma$ \cite{MR1865703}.

Since $\Gamma$ is simply laced, we use a sequence of vertices to express a loop on $\Gamma$.
We express elements $X\in\cG_{n,+}$ as a linear combination of loops of length $2n$.
Thus we use the loop convention of \cite{MR1865703,MR2812459}, not the functional convention of \cite{MR2979509}.

\begin{defn}
Given $X\in\cG_{n,+}$ and a loop $\ell\in \cG_{n,+}$, let $\coeff_{\in X}(\ell)$ be the coefficient of $\ell$ in $X$.
\end{defn}

Recall that the coefficients of loops for a Temperley-Lieb diagram are determined by \cite{MR1865703,MR2812459}.
We use a matrix to express coefficients of multiple loops at the same time.

\begin{defn}
For a vertex $v$ of $\Gamma$, let $p_v\in\cG_{0,\pm}$ be the $0$-box projection corresponding to $v$.
For $v$ an even vertex of $\Gamma$, let $p_v \cG_{n,+}$ be the algebra consisting of elements of the form
$$
\begin{tikzpicture}[baseline=-.1cm]
	\nbox{unshaded}{(-.8,0)}{.25}{0}{0}{$p_v$}
	\draw (0,-.8) -- (0,.8);
	\nbox{unshaded}{(0,0)}{.4}{0}{0}{$X$}
	\node at (.2,-.6) {\scriptsize{$n$}};
	\node at (.2,.6) {\scriptsize{$n$}};
\end{tikzpicture}
$$
where $X\in \cG_{n,+}$ and the multiplication is the usual stacking of elements.

If $n$ is even, and $v,w$ are both even vertices of $\Gamma$, let $p_v \cG_{n,+} p_w$ be the algebra consisting of elements of the form
$$
\begin{tikzpicture}[baseline=-.1cm]
	\nbox{unshaded}{(-.8,0)}{.25}{0}{0}{$p_v$}
	\nbox{unshaded}{(.8,0)}{.25}{0}{0}{$p_w$}
	\draw (0,-.8) -- (0,.8);
	\nbox{unshaded}{(0,0)}{.4}{0}{0}{$X$}
	\node at (.2,-.6) {\scriptsize{$n$}};
	\node at (.2,.6) {\scriptsize{$n$}};
\end{tikzpicture}
$$
where $X\in \cG_{n,+}$ and the multiplication is the usual stacking of elements.
\end{defn}

\begin{remark}
If $v$ is an even vertex, $p_v\cG_{n,+}$ is a direct sum of matrix algebras, and if $w$ is an even vertex and $n$ is even, then $p_v\cG_{n,+}p_w$ is a matrix algebra.
\end{remark}

The following is similar to \cite[Proposition 4.10]{1308.5197}.

\begin{prop}\label{prop:StarHoms}
The following maps are $*$-algebra homomorphisms:
\begin{enumerate}[(1)]
\item
$\Phi_v: \cG_{n,+} \to p_v \cG_{n,+}$ by
$
\begin{tikzpicture}[baseline=-.1cm]
	\draw (0,-.8) -- (0,.8);
	\nbox{unshaded}{(0,0)}{.4}{0}{0}{$X$}
	\node at (.2,-.6) {\scriptsize{$n$}};
	\node at (.2,.6) {\scriptsize{$n$}};
\end{tikzpicture}
\mapsto
\begin{tikzpicture}[baseline=-.1cm]
	\nbox{unshaded}{(-.8,0)}{.25}{0}{0}{$p_v$}
	\draw (0,-.8) -- (0,.8);
	\nbox{unshaded}{(0,0)}{.4}{0}{0}{$X$}
	\node at (.2,-.6) {\scriptsize{$n$}};
	\node at (.2,.6) {\scriptsize{$n$}};
\end{tikzpicture}
$\,, and
\item
$\Phi_{v,w}: \cG_{n,+} \to p_v \cG_{n,+} p_w$ by
$
\begin{tikzpicture}[baseline=-.1cm]
	\draw (0,-.8) -- (0,.8);
	\nbox{unshaded}{(0,0)}{.4}{0}{0}{$X$}
	\node at (.2,-.6) {\scriptsize{$n$}};
	\node at (.2,.6) {\scriptsize{$n$}};
\end{tikzpicture}
\mapsto
\begin{tikzpicture}[baseline=-.1cm]
	\nbox{unshaded}{(-.8,0)}{.25}{0}{0}{$p_v$}
	\nbox{unshaded}{(.8,0)}{.25}{0}{0}{$p_w$}
	\draw (0,-.8) -- (0,.8);
	\nbox{unshaded}{(0,0)}{.4}{0}{0}{$X$}
	\node at (.2,-.6) {\scriptsize{$n$}};
	\node at (.2,.6) {\scriptsize{$n$}};
\end{tikzpicture}
$.
\end{enumerate}
\end{prop}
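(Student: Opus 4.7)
The plan is to verify the three defining properties of a $*$-algebra homomorphism for $\Phi_v$ and $\Phi_{v,w}$: linearity, multiplicativity, and compatibility with the involution. Linearity of both maps is immediate from the linearity of adjoining a fixed 0-box to a diagram, so the content lies in multiplicativity and $*$-preservation.

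For multiplicativity of $\Phi_v$, I would use the fact that $p_v$ is a self-adjoint projection in the commutative algebra $\cG_{0,+}$: this algebra is spanned by mutually orthogonal minimal projections indexed by the even vertices of $\Gamma$, so $p_v \cdot p_v = p_v$ and $p_v^* = p_v$. Computing $\Phi_v(X)\cdot\Phi_v(Y) = (p_v X)(p_v Y)$ by vertical stacking, the leftmost region of $X$ and the leftmost region of $Y$ merge into a single region containing both copies of $p_v$. Planar isotopy lets us slide the two 0-boxes together within this merged region and multiply them using the algebra structure on $\cG_{0,+}$, collapsing them to a single $p_v$. The resulting diagram is precisely $p_v(XY) = \Phi_v(XY)$.

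For $\Phi_{v,w}$, the same argument is applied symmetrically on both sides: the left regions merge and the two $p_v$'s collapse via $p_v^2 = p_v$, while the right regions merge and the two $p_w$'s collapse via $p_w^2 = p_w$. The hypothesis that $n$ is even is used exactly here, since it guarantees that the rightmost region of an element of $\cG_{n,+}$ is unshaded, so that $p_w \in \cG_{0,+}$ may legitimately be placed there (rather than needing $p_w \in \cG_{0,-}$).

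Compatibility with $*$ follows from the fact that the involution on $\cG_{n,+}$ is realized by vertical reflection combined with complex conjugation; a 0-box carries no strings and is fixed in its region under this reflection, and both $p_v$ and $p_w$ are self-adjoint in $\cG_{0,\pm}$. Hence $\Phi_v(X)^* = (p_v X)^* = p_v X^* = \Phi_v(X^*)$ and likewise $\Phi_{v,w}(X)^* = \Phi_{v,w}(X^*)$. There is no real obstacle in this proof; the only step meriting explicit care is the planar-isotopy argument confirming that two 0-boxes sharing a region combine via the multiplication on $\cG_{0,\pm}$, which is a standard feature of graph planar algebras and exactly parallels the cited Proposition 4.10 of \cite{1308.5197}.
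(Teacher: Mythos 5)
Your proof is correct; the paper itself offers no argument for this proposition (it is stated without proof, with only a pointer to the analogous \cite[Proposition 4.10]{1308.5197}), and your argument is exactly the standard one that is being left implicit. The key points are all in place: the merged leftmost (and, for $n$ even, rightmost) region lets the two copies of $p_v$ (resp.\ $p_w$) combine via the $0$-box multiplication, $p_v$ is a self-adjoint idempotent in $\cG_{0,+}$, and the vertical reflection defining $*$ preserves these regions, so multiplicativity and $*$-compatibility follow; your observation that evenness of $n$ is needed precisely so that $p_w\in\cG_{0,+}$ lands in an unshaded region is the right one.
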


We now have two subsections to prove Lemmas \ref{lem:OneParameterPlus} and \ref{lem:OneParameterMinus} respectively.
We also check these computations directly in $\cG_\bullet$ in the {\tt Mathematica} notebook {\tt BraidRelationsForS.nb} bundled with the {\tt arXiv} source of this article.

\subsection{The case \texorpdfstring{$\omega_A=1$}{omega_A = 1}}

We now prove Lemma \ref{lem:OneParameterPlus}.
That is, for $\omega_A=1$, we show that up to sign, there is exactly a 1-parameter family of elements $A=A(\lambda)$ in the graph planar algebra of $\Gamma$ which satisfy
\begin{enumerate}[(1)]
\item
$A^*=A$ and $\cF^2(A)=A$,
\item
$A$ is uncappable, and
\item
$A^2=\jw{2}$ and $\check{A}^2=\cF(A)^2=\jw{2}$.
\end{enumerate}

\begin{proof}[Proof of Lemma \ref{lem:OneParameterPlus}]
We will define the coefficients of the loops for the element $A\in \cG_{2,+}$ and $\check{A}=\cF(A)\in \cG_{2,-}$.
To do so, we will simultaneously find coefficients of projections $P,Q\in \cG_{2,+}$ and $\check{P},\check{Q}\in\cG_{2,-}$ satisfying
\begin{enumerate}
\item
$A=P-Q$ and $\check{A}=\check{P}-\check{Q}$, and
\item
$P+Q=\jw{2}\in\cG_{2,+}$ and $\check{P}+\check{Q}=\jw{2}\in\cG_{2,-}$.
\end{enumerate}
First, note that $p_0 \cG_{2,+}$ is three dimensional. Since $A^2=\jw{2}$, $A$ is uncappable, and
$$
\coeff_{\in e_1}
\begin{pmatrix}
   0707 \\
   0767 \\
   0747
\end{pmatrix}
=
\begin{pmatrix}
   1 \\
   0 \\
   0
\end{pmatrix},
$$
without loss of generality, we may assume that
$$
\coeff_{\in A}
\begin{pmatrix}
   0707 \\
   0767 \\
   0747
\end{pmatrix}
=
\begin{pmatrix}
   0 \\
   1 \\
   -1
\end{pmatrix}.
$$
Then
$$
\coeff_{\in \check{A}}
\begin{pmatrix}
   7070 & 7076 & 7074 \\
   7670 & 7676 & 7674 \\
   7470 & 7476 & 7474
\end{pmatrix}
=
\begin{pmatrix}
   0 & \delta^{-\frac{1}{2}} & -\delta^{-\frac{1}{2}} \\
   \delta^{-\frac{1}{2}} & ? & ? \\
   -\delta^{-\frac{1}{2}} & ? & ?
\end{pmatrix}.
$$
Since $\displaystyle \check{P}=\frac{\jw{2}+\check{A}}{2}$, $\displaystyle \check{Q}=\frac{\jw{2}-\check{A}}{2}$, and $\displaystyle \frac{1-\delta^{-2}}{2}=\delta^{-1}$, we have
\begin{align*}
\coeff_{\in \check{P}}
\begin{pmatrix}
   7070 & 7076 & 7074 \\
   7670 & 7676 & 7674 \\
   7470 & 7476 & 7474
\end{pmatrix}
&=
\begin{pmatrix}
   \delta^{-1} & \frac{-\delta^{-\frac{3}{2}}+\delta^{-\frac{1}{2}}}{2} & \frac{-\delta^{-\frac{3}{2}}-\delta^{-\frac{1}{2}}}{2}\\
   \frac{-\delta^{-\frac{3}{2}}+\delta^{-\frac{1}{2}}}{2} & ? & ? \\
   \frac{-\delta^{-\frac{3}{2}}-\delta^{-\frac{1}{2}}}{2} & ? & ?
\end{pmatrix}
\text{ and }
\\
\coeff_{\in \check{Q}}
\begin{pmatrix}
   7070 & 7076 & 7074 \\
   7670 & 7676 & 7674 \\
   7470 & 7476 & 7474
\end{pmatrix}
&=
\begin{pmatrix}
   \delta^{-1} & \frac{-\delta^{-\frac{3}{2}}-\delta^{-\frac{1}{2}}}{2} & \frac{-\delta^{-\frac{3}{2}}+\delta^{-\frac{1}{2}}}{2} \\
   \frac{-\delta^{-\frac{3}{2}}-\delta^{-\frac{1}{2}}}{2} & ? & ? \\
   \frac{-\delta^{-\frac{3}{2}}+\delta^{-\frac{1}{2}}}{2} & ? & ?
\end{pmatrix}.
\end{align*}
Observe that $p_7 \cG_{2,-} p_7$ is a 3 by 3 matrix algebra.
The images of $e_2,\check{P},\check{Q}\in \cG_{2,-}$ in $p_7 \cG_{2,-} p_7$ under the natural $*$-homomorphism from Proposition \ref{prop:StarHoms} are three mutually orthogonal non-zero projections, so they are all rank 1 projections.
Hence we must have
\begin{align*}
\coeff_{\in \check{P}}
\begin{pmatrix}
   7070 & 7076 & 7074 \\
   7670 & 7676 & 7674 \\
   7470 & 7476 & 7474
\end{pmatrix}
&=
\begin{pmatrix}
   \delta^{-1} & \frac{-\delta^{-\frac{3}{2}}+\delta^{-\frac{1}{2}}}{2} & \frac{-\delta^{-\frac{3}{2}}-\delta^{-\frac{1}{2}}}{2}\\
   \frac{-\delta^{-\frac{3}{2}}+\delta^{-\frac{1}{2}}}{2} & (\frac{-\delta^{-1}+1}{2})^2 & \frac{\delta^{-2}-1}{4} \\
   \frac{-\delta^{-\frac{3}{2}}-\delta^{-\frac{1}{2}}}{2} & \frac{\delta^{-2}-1}{4} & (\frac{\delta^{-1}+1}{2})^2
\end{pmatrix},
\\
\coeff_{\in \check{Q}}
\begin{pmatrix}
   7070 & 7076 & 7074 \\
   7670 & 7676 & 7674 \\
   7470 & 7476 & 7474
\end{pmatrix}
&=
\begin{pmatrix}
   \delta^{-1} & \frac{-\delta^{-\frac{3}{2}}-\delta^{-\frac{1}{2}}}{2} & \frac{-\delta^{-\frac{3}{2}}+\delta^{-\frac{1}{2}}}{2} \\
   \frac{-\delta^{-\frac{3}{2}}-\delta^{-\frac{1}{2}}}{2} & (\frac{\delta^{-1}+1}{2})^2 & \frac{\delta^{-2}-1}{4}  \\
   \frac{-\delta^{-\frac{3}{2}}+\delta^{-\frac{1}{2}}}{2} & \frac{\delta^{-2}-1}{4}  & (\frac{-\delta^{-1}+1}{2})^2
\end{pmatrix},
\text{ and }
\\
\coeff_{\in \check{A}}
\begin{pmatrix}
   7070 & 7076 & 7074 \\
   7670 & 7676 & 7674 \\
   7470 & 7476 & 7474
\end{pmatrix}
&=
\begin{pmatrix}
   0 & \delta^{-\frac{1}{2}} & -\delta^{-\frac{1}{2}} \\
   \delta^{-\frac{1}{2}} & -\delta^{-1} & 0 \\
   -\delta^{-\frac{1}{2}} & 0 & \delta^{-1}
\end{pmatrix}.
\end{align*}
By a similar argument for $p_6 \cG_{2,+}p_4$, which is a 2 by 2 matrix algebra, we have
\begin{align*}
\coeff_{\in A}
\begin{pmatrix}
   6747 & 6745 \\
   6547 & 6545 \\
\end{pmatrix}
&=
\begin{pmatrix}
   0 & ? \\
   ? & ? \\
\end{pmatrix},
\\
\coeff_{\in P}
\begin{pmatrix}
   6747 & 6745 \\
   6547 & 6545 \\
\end{pmatrix}
&=
\begin{pmatrix}
   \frac{1}{2} & ? \\
   ? & ?
\end{pmatrix},
\text{ and}
\\
\coeff_{\in Q}
\begin{pmatrix}
   6747 & 6745 \\
   6547 & 6545
\end{pmatrix}
&=
\begin{pmatrix}
   \frac{1}{2} & ? \\
   ? & ?
\end{pmatrix},
\end{align*}
and the image of $\check{P},\check{Q}$ in $p_6 \cG_{2,+}p_4$ under the natural $*$-homomorphism from Proposition \ref{prop:StarHoms}  are pairwise orthogonal projections. So
\begin{align*}
\coeff_{\in P}
\begin{pmatrix}
   6747 & 6745 \\
   6547 & 6545
\end{pmatrix}
&=
\begin{pmatrix}
   \frac{1}{2} & \frac{1}{2} \lambda \\
   \frac{1}{2} \overline{\lambda} & \frac{1}{2}
\end{pmatrix}
\text{for a phase} ~\lambda,
\\
\coeff_{\in Q}
\begin{pmatrix}
   6747 & 6745 \\
   6547 & 6545
\end{pmatrix}
&=
\begin{pmatrix}
   \frac{1}{2} & -\frac{1}{2} \lambda \\
   -\frac{1}{2} \overline{\lambda} & \frac{1}{2}
\end{pmatrix},
\text{ and}
\\
\coeff_{\in A}
\begin{pmatrix}
   6747 & 6745 \\
   6547 & 6545
\end{pmatrix}
&=
\begin{pmatrix}
   0 & \lambda \\
   \overline{\lambda} & 0
\end{pmatrix}.
\end{align*}
Then
\begin{align*}
\coeff_{\in \check{A}}
\begin{pmatrix}
   7454 & 7456 \\
   7654 & 7656
\end{pmatrix}
&=
\begin{pmatrix}
   ? & \lambda \\
   \overline{\lambda} & ? \\
\end{pmatrix},
\\
\coeff_{\in \check{P}}
\begin{pmatrix}
   7454 & 7456 \\
   7654 & 7656 \\
\end{pmatrix}
&=
\begin{pmatrix}
   ? & \frac{1}{2}\lambda \\
   \frac{1}{2}\overline{\lambda} & ? \\
\end{pmatrix},
\text{ and}
\\
\coeff_{\in \check{Q}}
\begin{pmatrix}
   7454 & 7456 \\
   7654 & 7656 \\
\end{pmatrix}
&=
\begin{pmatrix}
   ? & -\frac{1}{2}\lambda \\
   -\frac{1}{2}\overline{\lambda} & ? \\
\end{pmatrix}.
\end{align*}
There is only one way to realize the two projections in $p_7\cG_{2,-}p_5$ as follows
\begin{align*}
\coeff_{\in \check{P}}
\begin{pmatrix}
   7454 & 7456 \\
   7654 & 7656 \\
\end{pmatrix}
&=
\begin{pmatrix}
   \frac{1}{2} & \frac{1}{2}\lambda \\
   \frac{1}{2}\overline{\lambda} & \frac{1}{2} \\
\end{pmatrix}
\text{ and }
\\
\coeff_{\in \check{Q}}
\begin{pmatrix}
   7454 & 7456 \\
   7654 & 7656 \\
\end{pmatrix}
&=
\begin{pmatrix}
   \frac{1}{2} & -\frac{1}{2}\lambda \\
   -\frac{1}{2}\overline{\lambda} & \frac{1}{2} \\
\end{pmatrix}.
\end{align*}
Then
$$
\coeff_{\in \check{A}}
\begin{pmatrix}
   7454 & 7456 \\
   7654 & 7656 \\
\end{pmatrix}
=
\begin{pmatrix}
   0 & \lambda \\
   \overline{\lambda} & 0
\end{pmatrix}.
$$
By the symmetries of the graph and the symmetries of $A$ and $\check{A}$, we can apply the same argument beginning with $p_1 \cG_{2,-}$, $p_2 \cG_{2,+}$, and $p_3 \cG_{2,-}$, instead of $p_0 \cG_{2,+}$ to derive the coefficients of the other loops.
Therefore we obtain a generator $A=A(\lambda)\in \cG_{2,+}$, which is parameterized by a phase $\lambda$ which satisfies the desired relations by construction.
\end{proof}

\subsection{The case \texorpdfstring{$\omega_B=-1$}{omega_B = -1}}

We now prove Lemma \ref{lem:OneParameterMinus}.
That is, for $\omega_B=-1$, up to sign, there are exactly two 1-parameter families of elements $B=B(\lambda, \varepsilon)$ in the graph planar algebra of $\Gamma$ which satisfy
\begin{enumerate}[(1)]
\item
$B^*=B$ and $\cF^2(B)=-B$,
\item
$B$ is uncappable, and
\item
$B^2=\jw{2}$ and $\check{B}^2=(-i \cF(B))^2=\jw{2}$.
\end{enumerate}

\begin{proof}[Proof of Lemma \ref{lem:OneParameterMinus}]
We will define the coefficients of the loops for the element $B\in \cG_{2,+}$ and $\check{B}=-i\cF(B)\in \cG_{2,-}$.
To do so, we will simultaneously find coefficients of projections $P,Q\in \cG_{2,+}$ and $\check{P},\check{Q}\in\cG_{2,-}$ satisfying
\begin{enumerate}
\item
$B=P-Q$ and $\check{B}=\check{P}-\check{Q}$, and
\item
$P+Q=\jw{2}\in\cG_{2,+}$ and $\check{P}+\check{Q}=\jw{2}\in\cG_{2,-}$.
\end{enumerate}
As before, without loss of generality, we may assume that
$$
\coeff_{\in B}
\begin{pmatrix}
   0707 \\
   0767 \\
   0747
\end{pmatrix}
=
\begin{pmatrix}
   0 \\
   1 \\
   -1
\end{pmatrix}.
$$
Then
$$
\coeff_{\in \check{B}}
\begin{pmatrix}
   7070 & 7076 & 7074 \\
   7670 & 7676 & 7674 \\
   7470 & 7476 & 7474
\end{pmatrix}
=
\begin{pmatrix}
   0 & i\delta^{-\frac{1}{2}} & -i\delta^{-\frac{1}{2}} \\
   -i\delta^{-\frac{1}{2}} & ? & ? \\
   i\delta^{-\frac{1}{2}} & ? & ?
\end{pmatrix}.
$$
Since $\displaystyle \check{P}=\frac{\jw{2}+\check{B}}{2}$, $\displaystyle \check{Q}=\frac{\jw{2}-\check{B}}{2}$, and $\displaystyle \frac{1-\delta^{-2}}{2}=\delta^{-1}$, we have
\begin{align*}
\coeff_{\in \check{P}}
\begin{pmatrix}
   7070 & 7076 & 7074 \\
   7670 & 7676 & 7674 \\
   7470 & 7476 & 7474
\end{pmatrix}
&=
\begin{pmatrix}
   \delta^{-1} & \frac{-\delta^{-\frac{3}{2}}+i\delta^{-\frac{1}{2}}}{2} & \frac{-\delta^{-\frac{3}{2}}-i\delta^{-\frac{1}{2}}}{2}\\
   \frac{-\delta^{-\frac{3}{2}}-i\delta^{-\frac{1}{2}}}{2} & ? & ? \\
   \frac{-\delta^{-\frac{3}{2}}+i\delta^{-\frac{1}{2}}}{2} & ? & ?
\end{pmatrix}
\text{ and }
\\
\coeff_{\in \check{Q}}
\begin{pmatrix}
   7070 & 7076 & 7074 \\
   7670 & 7676 & 7674 \\
   7470 & 7476 & 7474
\end{pmatrix}
&=
\begin{pmatrix}
   \delta^{-1} & \frac{-\delta^{-\frac{3}{2}}-i\delta^{-\frac{1}{2}}}{2} & \frac{-\delta^{-\frac{3}{2}}+i\delta^{-\frac{1}{2}}}{2} \\
   \frac{-\delta^{-\frac{3}{2}}+i\delta^{-\frac{1}{2}}}{2} & ? & ? \\
   \frac{-\delta^{-\frac{3}{2}}-i\delta^{-\frac{1}{2}}}{2} & ? & ?
\end{pmatrix}.
\end{align*}
Observe that $p_7 \cG_{2,-} p_7$ is a 3 by 3 matrix algebra.
The images of $e_2,\check{P},\check{Q}\in \cG_{2,-}$ in $p_7 \cG_{2,-} p_7$ under the natural $*$-homomorphism from Proposition \ref{prop:StarHoms} are three mutually orthogonal non-zero projections, so they are all rank 1 projections.
Hence we must have
\begin{align*}
\coeff_{\in \check{P}}
\begin{pmatrix}
   7070 & 7076 & 7074 \\
   7670 & 7676 & 7674 \\
   7470 & 7476 & 7474
\end{pmatrix}
&=
\begin{pmatrix}
   \delta^{-1} & \frac{-\delta^{-\frac{3}{2}}+i\delta^{-\frac{1}{2}}}{2} & \frac{-\delta^{-\frac{3}{2}}-i\delta^{-\frac{1}{2}}}{2}\\
   \frac{-\delta^{-\frac{3}{2}}-i\delta^{-\frac{1}{2}}}{2} & \frac{1+\delta^{-2}}{4} & (\frac{-\delta^{-1}-i}{2})^2 \\
   \frac{-\delta^{-\frac{3}{2}}+i\delta^{-\frac{1}{2}}}{2} & (\frac{-\delta^{-1}+i}{2})^2  & \frac{1+\delta^{-2}}{4}
\end{pmatrix},
\\
\coeff_{\in \check{Q}}
\begin{pmatrix}
   7070 & 7076 & 7074 \\
   7670 & 7676 & 7674 \\
   7470 & 7476 & 7474
\end{pmatrix}
&=
\begin{pmatrix}
   \delta^{-1} & \frac{-\delta^{-\frac{3}{2}}-i\delta^{-\frac{1}{2}}}{2} & \frac{-\delta^{-\frac{3}{2}}+i\delta^{-\frac{1}{2}}}{2} \\
   \frac{-\delta^{-\frac{3}{2}}+i\delta^{-\frac{1}{2}}}{2} & \frac{1+\delta^{-2}}{4} & (\frac{-\delta^{-1}+i}{2})^2  \\
   \frac{-\delta^{-\frac{3}{2}}-i\delta^{-\frac{1}{2}}}{2} & (\frac{-\delta^{-1}-i}{2})^2  & \frac{1+\delta^{-2}}{4}
\end{pmatrix},
\text{ and }
\\
\coeff_{\in \check{B}}
\begin{pmatrix}
   7070 & 7076 & 7074 \\
   7670 & 7676 & 7674 \\
   7470 & 7476 & 7474
\end{pmatrix}
&=
\begin{pmatrix}
   0 & i\delta^{-\frac{1}{2}} & -i\delta^{-\frac{1}{2}} \\
   -i\delta^{-\frac{1}{2}} & 0 & i\delta^{-1} \\
   i\delta^{-\frac{1}{2}} & -i\delta^{-1} & 0
\end{pmatrix}
\end{align*}
By a similar argument for $p_6 \cG_{2,+}p_4$, which is a 2 by 2 matrix algebra, we have
\begin{align*}
\coeff_{\in B}
\begin{pmatrix}
   6747 & 6745 \\
   6547 & 6545 \\
\end{pmatrix}
&=
\begin{pmatrix}
   \delta^{-1} & ? \\
   ? & ? \\
\end{pmatrix},
\\
\coeff_{\in P}
\begin{pmatrix}
   6747 & 6745 \\
   6547 & 6545 \\
\end{pmatrix}
&=
\begin{pmatrix}
   \frac{1+\delta^{-1}}{2} & ? \\
   ? & ?
\end{pmatrix},
\text{ and}
\\
\coeff_{\in Q}
\begin{pmatrix}
   6747 & 6745 \\
   6547 & 6545
\end{pmatrix}
&=
\begin{pmatrix}
   \frac{1-\delta^{-1}}{2} & ? \\
   ? & ?
\end{pmatrix},
\end{align*}
and the image of $\check{P},\check{Q}$ in $p_6 \cG_{2,+}p_4$ under the natural $*$-homomorphism from Proposition \ref{prop:StarHoms}  are pairwise orthogonal projections. So
\begin{align*}
\coeff_{\in P}
\begin{pmatrix}
   6747 & 6745 \\
   6547 & 6545
\end{pmatrix}
&=
\begin{pmatrix}
   \frac{1+\delta^{-1}}{2} & \lambda\frac{\sqrt{1-\delta^{-2}}}{2}  \\
  \overline{\lambda} \frac{\sqrt{1-\delta^{-2}}}{2}  & \frac{1-\delta^{-1}}{2}
\end{pmatrix}
\text{for a phase} ~\lambda,
\\
\coeff_{\in Q}
\begin{pmatrix}
   6747 & 6745 \\
   6547 & 6545
\end{pmatrix}
&=
\begin{pmatrix}
   \frac{1-\delta^{-1}}{2} & -\lambda\frac{\sqrt{1-\delta^{-2}}}{2}  \\
   -\overline{\lambda}\frac{\sqrt{1-\delta^{-2}}}{2}  & \frac{1+\delta^{-1}}{2}
\end{pmatrix},
\text{ and}
\\
\coeff_{\in B}
\begin{pmatrix}
   6747 & 6745 \\
   6547 & 6545
\end{pmatrix}
&=
\begin{pmatrix}
   \delta^{-1} &\lambda \sqrt{1-\delta^{-2}}  \\
  \overline{\lambda}  \sqrt{1-\delta^{-2}} & -\delta^{-1}
\end{pmatrix}.
\end{align*}
Then
\begin{align*}
\coeff_{\in \check{B}}
\begin{pmatrix}
   7454 & 7456 \\
   7654 & 7656
\end{pmatrix}
&=
\begin{pmatrix}
   ? & -i\lambda \sqrt{1-\delta^{-2}}  \\
   i\overline{\lambda}\sqrt{1-\delta^{-2}}  & ?
\end{pmatrix},
\\
\coeff_{\in \check{P}}
\begin{pmatrix}
   7454 & 7456 \\
   7654 & 7656 \\
\end{pmatrix}
&=
\begin{pmatrix}
   ? & -\frac{i}{2}\lambda \sqrt{1-\delta^{-2}}  \\
   \frac{i}{2}\overline{\lambda}\sqrt{1-\delta^{-2}} & ? \\
\end{pmatrix},
\text{ and}
\\
\coeff_{\in \check{Q}}
\begin{pmatrix}
   7454 & 7456 \\
   7654 & 7656 \\
\end{pmatrix}
&=
\begin{pmatrix}
   ? & \frac{i}{2} \lambda\sqrt{1-\delta^{-2}}  \\
   -\frac{i}{2}\overline{\lambda}\sqrt{1-\delta^{-2}}  & ? \\
\end{pmatrix}.
\end{align*}
There are two ways to realize the two projections in $p_7\cG_{2,-}p_5$ based on the sign $\varepsilon$ as follows
\begin{align*}
\coeff_{\in \check{P}}
\begin{pmatrix}
   7454 & 7456 \\
   7654 & 7656 \\
\end{pmatrix}
&=
\begin{pmatrix}
   \frac{1-\varepsilon\delta^{-1}}{2} & -\frac{i}{2} \lambda\sqrt{1-\delta^{-2}}  \\
   \frac{i}{2}\overline{\lambda}\sqrt{1-\delta^{-2}} & \frac{1+\varepsilon\delta^{-1}}{2} \\
\end{pmatrix}
\text{ and }
\\
\coeff_{\in \check{Q}}
\begin{pmatrix}
   7454 & 7456 \\
   7654 & 7656 \\
\end{pmatrix}
&=
\begin{pmatrix}
   \frac{1+\varepsilon\delta^{-1}}{2} & \frac{i}{2} \lambda\sqrt{1-\delta^{-2}}  \\
   -\frac{i}{2}\overline{\lambda}\sqrt{1-\delta^{-2}} & \frac{1-\varepsilon\delta^{-1}}{2} \\
\end{pmatrix}.
\end{align*}
Then
$$
\coeff_{\in \check{B}}
\begin{pmatrix}
   7454 & 7456 \\
   7654 & 7656 \\
\end{pmatrix}
=
\begin{pmatrix}
   -\varepsilon\delta^{-1} & -i \lambda\sqrt{1-\delta^{-2}}  \\
   i \overline{\lambda}\sqrt{1-\delta^{-2}}  & \varepsilon\delta^{-1} \\
\end{pmatrix}.
$$

By the symmetries of the graph and the symmetries of $B$ and $\check{B}$, we can apply the same argument beginning with $p_1 \cG_{2,-}$, $p_2 \cG_{2,+}$, and $p_3 \cG_{2,-}$, instead of $p_0 \cG_{2,+}$ to derive the coefficients of the other loops.
Therefore we obtain a generator $B=B(\lambda,\varepsilon)\in \cG_{2,+}$, which is parameterized by a phase $\lambda$ and a sign $\varepsilon$ which satisfies the desired relations by construction.
\end{proof}

\bibliographystyle{alpha}
\bibliography{../../bibliography/bibliography}

\end{document}